\definecolor{myurlcolor}{rgb}{0,0,0.4}
\definecolor{mycitecolor}{rgb}{0,0.5,0}
\definecolor{myrefcolor}{rgb}{0.5,0,0}
\numberwithin{equation}{section}
\theoremstyle{plain}
\newtheorem{dummy}{}[section]
\newtheorem{thm}[dummy]{Theorem}
\newtheorem{lemma}[dummy]{Lemma}
\newtheorem{prop}[dummy]{Proposition}
\newtheorem{cor}[dummy]{Corollary}
\newtheorem{defn}[dummy]{Definition}
\theoremstyle{definition}
\newtheorem{remark}[dummy]{Remark}
\newtheorem{ex}[dummy]{Example}
\Crefname{ex}{Example}{Examples}	
\Crefname{thm}{Theorem}{Theorems}
\Crefname{prop}{Proposition}{Propositions}
\Crefname{cor}{Corollary}{Corollaries}
\Crefname{defn}{Definition}{Definitions}
\newcommand{\N}{\mathbb{N}}
\newcommand{\R}{\mathbb{R}}
\newcommand{\cat}[1]{{\mathsf{#1}}} 
\newcommand{\ar}[2][]{\arrow{#2}{#1}}
\DeclareMathOperator*{\minl}{min}
\newcommand{\id}[1][]{\ifthenelse{\equal{#1}{}}{\mathrm{id}}{\mathrm{id}_{#1}}} 
\newcommand{\op}{\mathrm{op}}
\newcommand{\atol}{\xrightarrow}
\newcommand{\blp}{\Big{(}}
\newcommand{\brp}{\Big{)}}
\DeclareMathOperator{\im}{im}
\newcommand{\scdots}[2][]{\mathinner{#1\overset{#2}{\cdots}#1}}
\newcommand*{\LargerCdot}{\raisebox{-0.25ex}{\scalebox{1.5}{$\cdot$}}}
\newcommand{\C}{{\cat{C}}}
\newcommand{\Set}{\cat{Set}}
\newcommand{\Cat}{\cat{Cat}}
\newcommand{\Sp}{{\cat{Sp}}}
\newcommand{\Ex}{\mathsf{Ex}}
\newcommand{\Comp}{\mathsf{Comp}}
\let\originalleft\left
\let\originalright\right
\renewcommand{\left}{\mathopen{}\mathclose\bgroup\originalleft}
\renewcommand{\right}{\aftergroup\egroup\originalright}
\tikzset{%
    bullet/.style={
       fill=black,
       circle,
       minimum width=1pt,
       inner sep=1pt
     },
     relation/.style={
       -,
       thick,
       shorten <=2pt,
       shorten >=2pt
     },
     function/.style={
       ->,
       thick,
       shorten <=2pt,
       shorten >=2pt
     },
     every fit/.style={
       ellipse,
       draw,
       inner sep=0pt
     }
}
\setlist[enumerate]{label=(\alph*),itemsep=5pt,topsep=8pt}
\setlist[itemize]{label=$\triangleright$,itemsep=5pt,topsep=6pt}
\Crefname{equation}{}{}		
\author[C.~Constantin]{Carmen Constantin}
\address{Mansfield College, Oxford, UK}
\email{carmen.constantin@mansfield.ox.ac.uk}
\author[T.~Fritz]{Tobias Fritz}
\address{Department of Mathematics, University of Innsbruck, Austria}
\email{tobias.fritz@uibk.ac.at}
\author[P.~Perrone]{Paolo Perrone}
\address{Department of Computer Science, University of Oxford, United Kingdom}
\email{paolo.perrone@cs.ox.ac.uk}
\author[B.~Shapiro]{Brandon T. Shapiro}
\address{Topos Institute, Berkeley CA, U.S.A}
\email{shapiro@topos.institute}
\title[Weak cartesian properties of simplicial sets]{Weak cartesian properties of simplicial sets \vspace{3pt}}
\begin{document}

\begin{abstract}
	\vspace{3pt}

	Many special classes of simplicial sets, such as the nerves of categories or groupoids, the 2-Segal sets of Dyckerhoff and Kapranov, and the (discrete) decomposition spaces of G\'{a}lvez, Kock, and Tonks, are characterized by the property of sending certain commuting squares in the simplex category $\Delta$ to pullback squares of sets. We introduce weaker analogues of these properties called completeness conditions, which require squares in $\Delta$ to be sent to weak pullbacks of sets, defined similarly to pullback squares but without the uniqueness property of induced maps. We show that some of these completeness conditions provide a simplicial set with lifts against certain subsets of simplices first introduced in the theory of database design. We also provide reduced criteria for checking these properties using factorization results for pushouts squares in $\Delta$, which we characterize completely, along with several other classes of squares in $\Delta$. Examples of simplicial sets with completeness conditions include quasicategories, many of the compositories and gleaves of Flori and Fritz, and bar constructions for algebras of certain classes of monads. The latter is our motivating example.
\end{abstract}

\newgeometry{top=0.2cm,bottom=1cm}
\maketitle
\thispagestyle{empty}
\setcounter{tocdepth}{1}
\tableofcontents
\vspace{-1cm}

\restoregeometry

\newpage

\section{Introduction}

Compositional structures, such as categories, are most commonly defined in terms of algebraic operations satisfying certain algebraic laws. But there is a powerful alternative picture in which compositional structures are considered as certain combinatorial structures with merely extra properties (and no algebraic structure). For example, taking the \emph{nerve} of a category produces a combinatorial structure---in the form of a simplicial set---from which the category can be recovered; and conversely, every simplicial set which has a property known as the \emph{Segal condition} encodes a category under this correspondence.

Moreover, this combinatorial perspective suggests far-reaching generalizations of the concept of category, obtained by suitable relaxations of the Segal condition. The most prominent of these is the generalization to \emph{quasicategories}~\cite{joyal}, which are defined as simplicial sets with the additional property that every configuration consisting of $n$ $n$-simplices with shared faces arranged in the shape of an \emph{inner horn}, can be obtained from the faces of an $(n+1)$-simplex, called a \emph{filler} of the inner horn. Since the filler is merely required to exist and is generally non-unique, quasicategories are a compositional structure that is not algebraic.\footnote{It is worth noting that the theory of $(\infty,1)$-categories, which is modeled by quasicategories, also has closely related algebraic models~\cite{nikolaus}.}
Another example that has gained prominence recently is that of \emph{$2$-Segal sets}~\cite{DK2segal}, also known as (the discrete case of) \emph{decomposition spaces}~\cite{GKT1}. These again are compositional structures coming in the form of simplicial sets satisfying certain (in this case unique) filler conditions~\cite{DK2segal}. They arise naturally in many different ways in combinatorics~\cite{DK2segal,GKT1}.

In this paper, we introduce new compositional structures defined in such combinatorial terms. These structures are motivated by \emph{partial evaluations}~\cite{FP}, which is the idea that an algebraic expression like $3 + 1 + 4$ can not only be ``totally'' evaluated to $8$, but it can also be ``partially'' evaluated to $3 + 5$. This is formalized in terms of the bar construction of Eilenberg-Moore algebras of monads; on a concrete category, this bar construction results in a simplicial set whose $1$-skeleton describes partial evaluations. As shown in~\cite{FP}, these partial evaluations can often be composed (non-uniquely). This naturally raises the question whether the entire bar construction, considered as a simplicial set, has properties which encode a compositional structure similar to quasicategories or $2$-Segal sets. As we show in our companion paper~\cite{weakly_cartesian}, the bar construction of many monads, including many of those which describe commonly occurring algebraic structures, display the compositional properties that we study in this paper.

We start in \Cref{sec:vee_stuff,sec:pushout_char} with a thorough study of pushouts in the simplex category $\Delta$. This is based on the \emph{$\vee$-decomposition} of objects, morphisms, and more general diagrams in $\Delta$ that we introduce in \Cref{sec:vee_stuff}. The associated \emph{$\vee$-product} on $\Delta$ amounts to a partially defined monoidal structure which glues two morphisms $f_1 : [n_1] \to [m_1]$ and $f_2 : [n_2] \to [m_2]$ to
\[
	f_1 \vee f_2 \: : \: [n_1 + n_2] \longrightarrow [m_1 + m_2]
\]
whenever $f_1$ preserves the final vertex, as in $f_1(n_1) = m_1$, and $f_2$ preserves the initial vertex, as in $f_2(0) = 0$. By making judicious use of the $\vee$-decomposition and $\vee$-product, we obtain several characterization results on pushouts in $\Delta$. \Cref{thm:pushouts_delta_star} characterizes pushout squares as $\vee$-products of four minimal types of pushout squares with $[0]$ or $[1]$ in the upper left corner. \Cref{thm:pushouts_delta} then characterizes when a span in $\Delta$ has a pushout at all, and \Cref{prop:all_basic_pushouts} factors every pushout square in $\Delta$ into certain \emph{basic pushouts} and trivial pushouts. \Cref{lem:basic_decompose} lists $8$ particular pushout diagrams in $\Delta$ such that all others arise from these by composition and $\vee$-products, with all but two of these squares having parallel identity morphisms in at least one direction. \Cref{prop:balanced_factorization,balanced_decompose} provide similar results for \emph{balanced squares} in $\Delta$, which are those squares of coface maps that are pushouts of finite sets (but not necessarily pushouts in $\Delta$).

In \Cref{sec:compl_exact}, we consider the classes of squares in $\Delta$ that a given simplicial set $X : \Delta^\op \to \Set$ sends to (weak) pullbacks. Our philosophy is that this encodes compositional properties enjoyed by $X$: we consider a square in $\Delta$ being sent to a weak pullback a \emph{completeness} property, amounting to the existence of certain fillers, while being sent to a pullback is an \emph{exactness} property where the fillers are in addition unique. Based on the previous results on the characterization of pushouts in $\Delta$, \Cref{complete_characterization,exact_characterization} then reduce completeness and exactness with respect to entire classes of squares to simpler ones. 

Thus for every class of squares in $\Delta$, postulating completeness or exactness with respect to these squares specifies a type of compositional structure defined in terms of filler conditions. For example, $2$-Segal sets can be defined in this way~\cite[Proposition 2.3.2]{DK2segal}. Of particular interest to us in the context of the bar construction~\cite{weakly_cartesian} are simplicial sets that we call \emph{inner span complete}. A \emph{span complete} simplicial set is one which sends all balanced squares of coface maps to weak pullbacks (\Cref{def:span_complete}), while an inner span complete simplicial set only needs to send pushout squares of coface maps to weak pullbacks (\Cref{def:inner_span_complete}). These are characterized by the possibility to fill any pair of $(n-1)$-simplices that overlap on an $(n-2)$-simplex face to an $n$-simplex (for all $n \ge 2$), with an ``innerness'' restriction of these pairs in the inner span case analogous to the restriction of horns to inner horns when generalizing Kan complexes to quasicategories.

While these compositional properties may sound rather weak, we show in \Cref{acyclic_fillers,acyclicfiller} that they are sufficient to imply the existence of much more general fillers, namely fillers for all \emph{(directed) acyclic configurations}. 
Among the most basic instances of this is the consequence that any string of $1$-simplices of length $n$, as in the Segal condition, has an $n$-simplex filler.
In general, these acyclic configurations are (directed) simplicial complexes characterized in \Cref{undirected_acyclic,directed_acyclic} in terms of combinatorial acyclicity conditions that are not homotopy invariant, but are closely related to notions of shellability and collapsibility in combinatorial topology. While our definition of directed combinatorial acyclicity is new, the undirected version has a long history in database theory~\cite{maier}.

Finally, \Cref{sec:examples} concludes the paper with a presentation of some first examples of (inner) span complete simplicial sets unrelated to the bar construction examples considered in~\cite{weakly_cartesian}. We note in \Cref{qcat_innerspans} that quasicategories are inner span complete, while the converse is not true. We then relate (inner) span completeness to the compositories and gleaves from~\cite{FF}, noting that some of the examples considered there are also span complete or inner span complete simplicial sets. This includes an inner span complete simplicial set of higher spans in any category, a span complete simplicial set where the $n$-simplices are joint probability distributions of $n+1$ random variables, and a closely related one in which they are the tables with $n+1$ columns in a relational database.

\subsection*{Weakly cartesian squares}\label{weakly_cartesian}

We now present some basic background in weak pullbacks.

One of the main ideas we consider is replacing definitions involving pullback squares with analogues using instead \emph{weak pullback squares}, which have a weaker universal property than pullbacks (which we sometimes call \emph{strong} pullbacks for emphasis) in that induced maps need not be unique.

\begin{defn}\label{weak pullback}
 (\cite{JoyalAnalytic}) A diagram 
 \begin{equation}\label{square}
 \begin{tikzcd}
	A \ar{r}{f} \ar{d}[swap]{g}  & B \ar{d}[swap]{m} \\
	C \ar{r}{n} & D
 \end{tikzcd}
 \end{equation}
 in a category $\cat{C}$ is called a \emph{weak pullback}, or \emph{weakly cartesian square}, if for every object $S$ and every commutative diagram
 $$
 \begin{tikzcd}[sep=small]
	S \ar{drrr}{p} \ar{dddr}[swap]{q} \\
	& && B \ar{dd}[swap]{m} \\ \\
	& C \ar{rr}{n} && D
 \end{tikzcd}
 $$
 in $\cat{C}$ there exists an arrow $S\to A$ making the following diagram commute. 
 $$
 \begin{tikzcd}[sep=small]
	S \ar{drrr}{p} \ar{dr} \ar{dddr}[swap]{q} \\
	& A \ar{rr}[swap]{f} \ar{dd}{g}  && B \ar{dd}[swap]{m} \\ \\
	& C \ar{rr}{n} && D
 \end{tikzcd}
 $$
\end{defn}

If we are in the category $\cat{Set}$, the diagram~\Cref{square} is a weak pullback if and only if for every $b\in B$ and $c\in C$ with $m(b)=n(c)$ there exists $a\in A$ such that $f(a)=b$ and $g(a)=c$.  Note that if we moreover require the map $S\to A$ to be unique, we recover the ordinary notion of pullback (or cartesian square). 

Like strong pullbacks (and by the same argument), weak pullbacks are closed under horizontal and vertical composition of squares. Strong pullbacks further satisfy the following standard \emph{pullback lemma}, also known as the \emph{prism lemma} in the homotopical setting (see for instance \cite[Lemma 1.11]{GKT1}).

\begin{lemma}\label{pullback_lemma}
In any diagram as below, if the right square and outer rectangle are strong pullbacks, then so is the left square.
$$
\begin{tikzcd}
	\LargerCdot \ar{r} \ar{d} & \LargerCdot \ar{r} \ar{d} & \LargerCdot \ar{d} \\
	\LargerCdot \ar{r} & \LargerCdot \ar{r} & \LargerCdot
\end{tikzcd}
$$
\end{lemma}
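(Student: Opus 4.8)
The plan is to run the classical two-square diagram chase. Label the diagram in the statement as
$$
\begin{tikzcd}
	A \ar{r}{f} \ar{d}[swap]{a} & B \ar{r}{g} \ar{d}[swap]{b} & C \ar{d}{c} \\
	A' \ar{r}{f'} & B' \ar{r}{g'} & C'
\end{tikzcd}
$$
so that the left square has boundary $(f,a,b,f')$, the right square $(g,b,c,g')$, and the outer rectangle $(gf,a,c,g'f')$. Commutativity of the two inner squares gives $bf=f'a$ and $cg=g'b$, whence $c(gf)=g'(f'a)$. We are given that the right square and the outer rectangle are strong pullbacks, and since the left square commutes it remains only to establish its universal property.

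First I would fix an arbitrary test cone over the left square: an object $S$ together with maps $p\colon S\to B$ and $q\colon S\to A'$ with $bp=f'q$. From this I would build a cone over the outer rectangle by keeping $q$ and setting $r\coloneqq gp\colon S\to C$; the compatibility $cr=g'(f'q)$ is immediate from $cg=g'b$ and $bp=f'q$. Since the outer rectangle is a strong pullback, there is then a \emph{unique} $h\colon S\to A$ with $(gf)h=gp$ and $ah=q$. This $h$ is the candidate factorization through the left square, and by construction it already satisfies $ah=q$.

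The one step that genuinely uses the hypothesis on the right square — and the only place where all of the data interact — is the verification that $fh=p$. Here I would use that $B$ is the pullback of $C\xrightarrow{c}C'\xleftarrow{g'}B'$, so that a morphism into $B$ is determined by its composites with $g$ and with $b$. These agree for $fh$ and $p$: on one side $g(fh)=(gf)h=gp$, and on the other $b(fh)=f'(ah)=f'q=bp$, using $bf=f'a$, then $ah=q$, then $f'q=bp$. Hence $fh=p$ by the uniqueness half of the right-square pullback, so $h$ is a morphism of cones for the left square. Uniqueness of $h$ among such morphisms is then automatic: any $h'$ with $fh'=p$ and $ah'=q$ also satisfies $(gf)h'=g(fh')=gp$ and $ah'=q$, so $h'=h$ by uniqueness for the outer rectangle.

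I do not expect a real obstacle: the argument is bookkeeping with the two commuting squares. The one subtlety worth flagging is in the third paragraph — one must not try to deduce $fh=p$ directly from the outer-rectangle pullback, which only controls the composite of $gf$ with $h$; the extra factor $g$ has to be cancelled using the pullback property of the \emph{right} square, and that is exactly what the hypothesis on that square is for.
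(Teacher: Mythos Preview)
Your argument is correct and is the standard diagram chase for the pasting law. The paper does not actually supply a proof of this lemma; it simply cites it as the standard pullback (or prism) lemma, so there is no paper-proof to compare against beyond noting that your write-up is exactly the expected verification.
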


A fundamental difference between strong and weak pullbacks is that this does not hold for weak pullbacks in general.

\begin{ex}
Consider the diagram below in $\Set$:
$$
\begin{tikzcd}
	\{*\} \ar{drr}{b} \ar[equals]{ddr} \\
	& \{*\} \ar{r}[near start]{a} \ar[equals]{d} & \{a,b\} \ar{r} \ar{d} & \{*\} \ar[equals]{d} \\
	& \{*\} \ar[equals]{r} & \{*\} \ar[equals]{r} & \{*\}
\end{tikzcd}
$$

Both the right square and the outer rectangle are weak pullbacks, and the kite shaped subdiagram commutes, but there is no map $h : \{*\} \to \{*\}$ with $ah=b$. The left square is therefore not a weak pullback.
\end{ex}

In categories with all pullbacks such as $\Set$, the following is a useful characterization of weak pullback squares, which follows immediately from considering the induced maps in both directions between weak and strong pullbacks of the same cospan.

\begin{lemma}\label{pullback_split_epi}
A commutative square in a category with all pullbacks is a weak pullback if and only if the induced map into the pullback of its cospan is split epic.
\end{lemma}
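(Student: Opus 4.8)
The plan is to compare the given square against the strong pullback of the same cospan and read off both implications directly from the two universal properties in play. Write the square with legs $f : A \to B$, $g : A \to C$, $m : B \to D$, $n : C \to D$; let $P := B \times_D C$ be the strong pullback of the cospan $B \atol{m} D \afml{n} C$, with projections $p_B : P \to B$ and $p_C : P \to C$; and let $u : A \to P$ be the unique map satisfying $p_B u = f$ and $p_C u = g$, which exists because the square commutes. The assertion to prove is that the square is a weak pullback if and only if $u$ is split epic.

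For the ``if'' direction, I would assume a section $s : P \to A$ with $u s = \id[P]$ and verify the defining property of a weak pullback by hand. Given an object $S$ together with $p : S \to B$ and $q : S \to C$ satisfying $m p = n q$, the universal property of the strong pullback $P$ supplies $t : S \to P$ with $p_B t = p$ and $p_C t = q$; then $s t : S \to A$ is a suitable induced map, since $f(s t) = p_B u s t = p_B t = p$ and $g(s t) = p_C u s t = p_C t = q$. No uniqueness of the induced map is claimed or needed here, consistently with the square being only a weak pullback.

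For the ``only if'' direction, the idea is to feed the pullback into its own weak pullback property: apply that property with $S = P$ and the cone $(p_B, p_C)$, which commutes over $D$ precisely by the definition of $P$. This yields a map $s : P \to A$ with $f s = p_B$ and $g s = p_C$. Then $u s : P \to P$ satisfies $p_B(u s) = f s = p_B$ and $p_C(u s) = g s = p_C$, so $u s = \id[P]$ by the \emph{uniqueness} clause of the universal property of the strong pullback $P$; hence $u$ is split epic.

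I do not anticipate a genuine obstacle, since both implications are formal manipulations of the universal properties of the weak and strong pullbacks of a fixed cospan. The only subtlety worth flagging is that the uniqueness step in the ``only if'' direction — which has no analogue for weak pullbacks — is exactly what upgrades the identities $f s = p_B$ and $g s = p_C$ to $u s = \id[P]$; and that the hypothesis that the ambient category has the relevant pullback is used solely to ensure that $P$, and therefore the comparison map $u$, is defined in the first place.
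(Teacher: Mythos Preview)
Your argument is correct and is precisely the approach the paper has in mind: the paper only remarks that the lemma ``follows immediately from considering the induced maps in both directions between weak and strong pullbacks of the same cospan,'' and your write-up spells out exactly those two induced maps and verifies that one is a section of the other.
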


The following lemma will be particularly useful when $f$ or $g$ is a degeneracy map of a simplicial set, which is always (split) monic.

\begin{lemma}\label{mono_strong_pullback}
If the square below is a weak pullback and $f$ or $g$ is monic, then the square is a strong pullback.
\[
 \begin{tikzcd}
	A \ar{r}{f} \ar{d}[swap]{g}  & B \ar{d}[swap]{} \\
	C \ar{r}{} & D
 \end{tikzcd}
\]
\end{lemma}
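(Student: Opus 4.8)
The plan is simply to unwind the definition of a strong pullback and reduce uniqueness of induced maps to left-cancellability of a monomorphism. Concretely, name the unlabelled legs of the square $m \colon B \to D$ and $n \colon C \to D$, and suppose we are given an object $S$ together with maps $p \colon S \to B$ and $q \colon S \to C$ satisfying $mp = nq$. Existence of a map $h \colon S \to A$ with $fh = p$ and $gh = q$ is exactly the hypothesis that the square is a weak pullback, so nothing is needed there. For uniqueness I would exploit the symmetry between the two legs: without loss of generality assume $f$ is monic (the case with $g$ monic is identical after reflecting the square across its diagonal). If $h_1, h_2 \colon S \to A$ both satisfy the two equations, then in particular $fh_1 = p = fh_2$, and since $f$ is a monomorphism this forces $h_1 = h_2$. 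Hence the induced map into $A$ is unique, and the square is a strong pullback in the sense of \Cref{weak pullback}.

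I do not expect any genuine obstacle here: the statement is essentially a one-line consequence of the definitions. The only points worth a moment's care are (i) handling the symmetry so that both the ``$f$ monic'' and ``$g$ monic'' cases are covered by a single argument, and (ii) noting that nothing is assumed about the ambient category possessing pullbacks, so the argument applies in the stated generality without appealing to \Cref{pullback_split_epi}. When pullbacks do exist, one could alternatively argue that the comparison map from the square to the strong pullback of its cospan is split epic by \Cref{pullback_split_epi} and is also monic because $f$ (or $g$) is, hence an isomorphism; but the direct cancellation argument above is cleaner and avoids that extra hypothesis.
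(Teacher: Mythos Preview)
Your proposal is correct and matches the paper's own proof essentially line for line: assume without loss of generality that $f$ is monic, note that existence of an induced map is given by the weak pullback hypothesis, and deduce uniqueness from $fh_1 = p = fh_2$ together with left cancellability of $f$. The alternative argument via \Cref{pullback_split_epi} that you sketch is not used in the paper, and as you note it is unnecessary here.
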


\begin{proof}
Assume $f$ is monic (the argument for $g$ is analogous), and let $p : S \to B$, $q : S \to C$ be maps which commute over $D$. Any two induced maps $h,h' : S \to A$ with $fh=fh'=p$ are equal as $f$ is monic.
\end{proof}

\subsection*{Simplicial terminology}

Throughout the paper, $\Delta$ denotes the simplex category, i.e.~the category of nonempty finite ordinals
\[
	{[n]} \coloneqq \{0,\ldots,n\}	
\]
for $n \in \N$ as objects and monotone maps as morphisms. Its \emph{generating coface maps} are the morphisms
\[
	d^{n,i} \: : \: {[n-1]} \longrightarrow {[n]}
\]
for $i = 0,\ldots,n$, given by the inclusion of ${[n-1]}$ into ${[n]}$ omitting the element $i$. The \emph{generating codegeneracy maps} are likewise the morphisms
\[
	s^{n,i} \: : \: {[n+1]} \longrightarrow {[n]}
\]
for $i = 0,\ldots,n$, given by the map which hits $i$ twice but otherwise acts like the identity. A coface map or codegeneracy map in general is a composite of generating ones. 

A simplicial set is then a functor $\Delta^\op \to \Set$. As usual, when the simplicial set under consideration is clear from the context, then we denote the face and degeneracy maps (the functor's action on cofaces and codegeneracies) using subscripts, $d_{n,i}$ and $s_{n,i}$, or merely $d_i$ and $s_i$.

When discussing commuting squares in $\Delta$, we implicitly identify a square as below left with its \emph{mirror image} as below right:
\[
	\begin{tikzcd}
		{[m]} \arrow{r}{f} \arrow{d}[swap]{g}	& {[p]} \ar{d}[swap]{h} \\
		{[q]} \arrow{r}{k}			& {[n]}
	\end{tikzcd}\qquad\qquad	\begin{tikzcd}
		{[m]} \arrow{r}{g} \arrow{d}[swap]{f}	& {[q]} \ar{d}[swap]{k} \\
		{[p]} \arrow{r}{h}			& {[n]}
	\end{tikzcd}
\]

We call a square \emph{trivial} if either pair of parallel arrows are identities, and note that trivial squares are automatically pushouts.

\subsection*{Acknowledgements}

We first of all thank Joachim Kock for detailed comments on an earlier version, which have resulted in various improvements to the exposition.

This paper originates from the \emph{Applied Category Theory 2019} school. We thank the organizers Daniel Cicala and Jules Hedges for having made it happen, the Computer Science Department of the University of Oxford for hosting the event, as well as all other participants of the school for the interesting discussions and insights, especially Martin Lundfall. 

Research at Perimeter Institute is supported in part by the Government of Canada through the Department of Innovation, Science and Economic Development Canada and by the Province of Ontario through the Ministry of Colleges and Universities. Research for the third author was partly funded by the Fields Institute (Canada), and by the AFOSR grants FA9550-19-1-0113 and FA9550-17-1-0058 (U.S.A.). The fourth author was supported by the National Defense Science and Engineering Graduate Fellowship Program.

%
%
%
%

\section{$\vee$-Decompositions and $\vee$-products in the simplex category}
\label{sec:vee_stuff}

With the goal of reducing the description of pushouts and other classes of squares in $\Delta$ to simpler cases in mind,
we now introduce our main technical tool of decomposing diagrams in $\Delta$ into families of simpler diagrams of the same shape: the $\vee$-decomposition. The key observation is that a morphism $f : {[r]} \to {[n]}$ in $\Delta$ \emph{decomposes} ${[n]}$ into $r+2$ pieces, namely the subsimplices 
\begin{align*}
	{[n_0]} & := \{0,\ldots ,f(0)\}, \\
	{[n_i]} & := \{f(i-1),\ldots ,f(i)\} \quad \text{\normalfont{for }} 1 \le i \le r, \\
	{[n_{r+1}]} & := \{f(r),\ldots ,n\},
\end{align*}
and conversely that these simplices assemble into ${[n]}$ by an operation we call the $\vee$-product, which behaves like a partially defined monoidal structure. We will subsequently exploit the fact that both the $\vee$-decomposition and the $\vee$-product can be expressed as colimits in order to argue that they are well-behaved with respect to pushouts.

\begin{defn}
Let ${[r]} / \Delta$ denote the \emph{undercategory} of ${[r]}$ in $\Delta$, whose objects are maps ${[r]} \to {[n]}$ in $\Delta$ and whose morphisms $({[r]} \to {[n]}) \to ({[r]} \to {[m]})$ are maps ${[n]} \to {[m]}$ commuting with the maps from ${[r]}$. We denote by $\vee : {[r]}/\Delta \to \Delta$ the forgetful functor sending ${[r]} \to {[n]}$ to ${[n]}$ and forgetting the commuting property of the morphisms.
\end{defn}

As alluded to above, we will think of a map $f : {[r]} \to {[n]}$ as a decomposition of ${[n]}$ into $r + 2$ pieces, which we call \emph{$\vee$-components}. We recall now a general categorical property of undercategories, which we make extensive use of in this section.

\begin{lemma}
	\label{lem:opfibration}
	$\vee$ is a discrete opfibration.  That is, for any map $f : {[n]} \to {[m]}$ in $\Delta$ and a lift of ${[n]}$ to $g : {[r]} \to {[n]}$ in ${[r]} / \Delta$, there is a unique lift of ${[m]}$ to $h : {[r]} \to {[m]}$ in ${[r]} / \Delta$ such that $f$ lifts to a map from $g$ to $h$ in ${[r]} / \Delta$.
\end{lemma}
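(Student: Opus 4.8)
The plan is to simply unwind the definitions of the undercategory $[r]/\Delta$ and of the functor $\vee$, and to observe that the required lift is forced with no freedom. Recall that a morphism in $[r]/\Delta$ from $g : [r] \to [n]$ to some $h : [r] \to [m]$ is, by definition, a map $\phi : [n] \to [m]$ in $\Delta$ satisfying $\phi \circ g = h$, and that $\vee$ sends such a morphism to its underlying map $\phi : [n] \to [m]$. Hence, for a morphism of $[r]/\Delta$ out of $g$ to lie over a prescribed $f : [n] \to [m]$, its underlying map must be $f$ itself, and the commutation condition then forces the target to be $h = f \circ g$.

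First I would check existence. Set $h := f \circ g : [r] \to [m]$; this is a morphism of $\Delta$, being a composite of monotone maps, hence an object of $[r]/\Delta$ lying over $[m]$. The map $f : [n] \to [m]$ trivially satisfies $f \circ g = h$, so it constitutes a morphism $g \to h$ in $[r]/\Delta$, and $\vee$ applied to it is $f$, as required.

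Next I would check uniqueness. If $\phi : g \to h'$ is any morphism of $[r]/\Delta$ with $\vee(\phi) = f$, then $\phi = f$ as a map $[n] \to [m]$, so in particular $h'$ has domain $[r]$ and codomain $[m]$; and the defining condition $\phi \circ g = h'$ gives $h' = f \circ g = h$. Thus both the target object $h$ and the lifting morphism are uniquely determined, which is exactly the assertion that $\vee$ is a discrete opfibration.

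The argument is purely formal; the only points to keep straight are the direction of the triangles in the undercategory and the fact that $\vee$ genuinely forgets the commutation with the structure map from $[r]$, so that no constraint on $f$ survives beyond being a morphism of $\Delta$. Accordingly there is no real obstacle here — the purpose of the lemma is to record this opfibration property for repeated use in the section, in particular for transporting pushout squares along the $\vee$-decomposition.
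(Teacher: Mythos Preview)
Your proof is correct and is essentially the same as the paper's, which simply sets $h := f \circ g$ and notes that the statement follows immediately; you have merely spelled out the existence and uniqueness checks in more detail.
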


\begin{proof}
Define $h$ to be the composite $fg$ and this follows immediately.  Concretely, this decomposes ${[m]}$ into the $\vee$-components
\begin{align*}
	{[m_0]} & := \{0,\ldots ,f(g(0))\}, \\
	{[m_i]} & := \{f(g(i-1)),\ldots ,f(g(i))\} \quad \text{\normalfont{for }} 1 \le i \le r, \\
	{[m_{r+1}]} & := \{f(g(r)),\ldots ,m\}. \qedhere
\end{align*}
\end{proof}

We will call a lift of ${[n]}$ to ${[r]} / \Delta$ for some $r$ a \emph{$\vee$-decomposition} of ${[n]}$.
The lemma shows that $\vee$-decompositions push forward along maps in $\Delta$.  This lets us further extend a $\vee$-decomposition on ${[n]}$ to more general diagrams in $\Delta$.

\begin{cor}
	\label{cor:diagram_star_decomp}
	Let $\cat{J}$ be a category with an initial object $I$, and $D : \cat{J} \to \Delta$ a diagram. Then every $\vee$-decomposition of $D(I)$ extends uniquely to a $\vee$-decomposition of the whole diagram $D$; that is, $D$ lifts along $\vee$ to a diagram $D' : \cat{J} \to {[r]} / \Delta$.
\end{cor}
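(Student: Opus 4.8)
The plan is to apply the discrete opfibration property of $\vee$ (\Cref{lem:opfibration}) pointwise, transporting the given $\vee$-decomposition along the canonical maps out of the initial object $I$.

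Fix a $\vee$-decomposition $g : {[r]} \to D(I)$, viewed as an object of ${[r]}/\Delta$. For each object $j$ of $\cat{J}$, write $\iota_j : I \to j$ for the unique morphism, and define $D'(j)$ to be the pushforward of $g$ along $D(\iota_j) : D(I) \to D(j)$; by \Cref{lem:opfibration} this is just the composite $D(\iota_j) \circ g : {[r]} \to D(j)$, regarded as an object of ${[r]}/\Delta$ (for $j = I$ it recovers $g$, since $\iota_I = \id$). To lift morphisms, note that $\phi \circ \iota_j = \iota_k$ for any $\phi : j \to k$, by uniqueness of morphisms out of the initial object; applying $D$ gives $D(\phi)\circ D(\iota_j) = D(\iota_k)$, hence $D(\phi)\circ D'(j) = D(\phi)\circ D(\iota_j)\circ g = D(\iota_k)\circ g = D'(k)$, so $D(\phi)$ determines a morphism $D'(j) \to D'(k)$ in ${[r]}/\Delta$, which we declare to be $D'(\phi)$. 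Functoriality of $D'$ is then automatic, since composition and identities in ${[r]}/\Delta$ are computed on the underlying maps in $\Delta$, on which $D'$ agrees with $D$; and by construction $\vee \circ D' = D$.

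For uniqueness, suppose $D''$ is another lift of $D$ along $\vee$ with $D''(I) = g$. For each $j$, the morphism $D''(\iota_j)$ is a lift of $D(\iota_j)$ in ${[r]}/\Delta$ whose source is $D''(I) = g$; the discrete opfibration property of $\vee$ then forces its target, and hence $D''(j)$, to equal $D(\iota_j)\circ g = D'(j)$, and forces $D''(\iota_j) = D'(\iota_j)$. For a general morphism $\phi : j \to k$, both $D'(\phi)$ and $D''(\phi)$ are morphisms $D'(j) \to D'(k)$ in ${[r]}/\Delta$ with underlying map $D(\phi)$ in $\Delta$; since $\vee$ is faithful (being a discrete opfibration), they coincide, so $D'' = D'$.

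No step here is a genuine obstacle: all the real content is packaged into \Cref{lem:opfibration}, and the remaining work is bookkeeping with the initial object. The one point deserving care is phrasing the uniqueness claim at the level of functors rather than merely on objects, which is dispatched by faithfulness of $\vee$.
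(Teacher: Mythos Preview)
Your proof is correct and is precisely an explicit unpacking of the paper's one-line argument, which simply says ``This is a standard property of discrete opfibrations.'' You have spelled out exactly that standard property using \Cref{lem:opfibration}, so the approaches coincide.
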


\begin{proof}
	This is a standard property of discrete opfibrations.
\end{proof}

This allows us to $\vee$-decompose spans and squares in $\Delta$ according to a $\vee$-decomposition of their initial object. For our purposes, we will take the $\vee$-decomposition of $D(I)$ identifying separately its endpoints and each of its edges as follows.

\begin{defn}
The \emph{canonical $\vee$-decomposition} of ${[n]}$ is $\id : {[n]} \to {[n]}$ in ${[n]} / \Delta$, or equivalently the expression of ${[n]}$ as ${[0]} \vee {[1]} \vee {[1]} \vee \cdots \vee {[1]} \vee {[0]}$ (with $n$ copies of ${[1]}$).
\end{defn}

For $D$ a diagram as above, the \emph{canonical $\vee$-decomposition of $D$} is the decomposition induced by the canonical $\vee$-decomposition of $D(I)$.

$\vee$ can also be expressed as a colimit.

\newcommand{\Fun}[2]{\mathrm{Fun}_\vee(#1,#2)}	

\begin{lemma}
\label{lem:star_as_colim}
Consider diagrams in $\Delta$ as below, where we specify a morphism out of the singleton set ${[0]} = \{0\}$ by its image in the target:

\begin{center}\begin{tikzcd}
	{[n_0]} & & \cdots & & {[n_{r+1}]} \\
	& {[0]} \arrow{ul}[swap]{n_0} \arrow{ur}{0} & \cdots & {[0]} \arrow{ul}[swap]{n_r} \arrow{ur}{0} 
\end{tikzcd}\end{center}
We denote the shape of these diagrams by $\cat{V}^{r+1}$. Then:

\begin{enumerate}
\item ${[r]} / \Delta$ is isomorphic to the category $\Fun{\cat{V}^{r+1}}{\Delta}$ of diagrams of this form and natural transformations.
\item The functor $\Fun{\cat{V}^{r+1}}{\Delta} \cong {[r]} / \Delta \atol{\vee} \Delta$ is naturally isomorphic to the colimit functor sending such a diagram to its colimit ${[n_0 + \cdots + n_{r+1}]}$.
\end{enumerate}
\end{lemma}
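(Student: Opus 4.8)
The plan is to prove (a) by writing down an explicit functor $[r]/\Delta \to \Fun{\wedge^{r+2}}{\Delta}$ together with its inverse, and then to deduce (b) by checking directly that the $\vee$-components of an object, included into it, form a colimiting cocone of the associated diagram.

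For the functor realizing (a), I would send an object $f : [r] \to [n]$ to the $\wedge^{r+2}$-shaped diagram whose peaks are its $\vee$-components
\[
	{[n_0]} = \{0,\ldots,f(0)\}, \qquad {[n_i]} = \{f(i-1),\ldots,f(i)\}, \qquad {[n_{r+1}]} = \{f(r),\ldots,n\},
\]
with each valley $[0]$ mapping to the final vertex of the peak on its left and the initial vertex of the peak on its right, so that the two outer copies of $[0]$ pick out $0$ and $n$. A morphism $g : [n] \to [m]$ over $[r]$ restricts to maps $[n_j] \to [m_j]$ — this is the push-forward of $\vee$-decompositions along maps in $\Delta$ from \Cref{lem:opfibration} — and the equation $gf = f'$ makes these restrictions agree at the shared vertices, so $g$ is sent to the resulting natural transformation. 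Conversely, a diagram of the prescribed form is just a tuple $(n_0,\ldots,n_{r+1})$ of natural numbers, which I send to $f : [r] \to [n_0 + \cdots + n_{r+1}]$ defined by the partial sums $f(i) = n_0 + \cdots + n_i$, while a natural transformation — a family of maps $\alpha_j : [n_j] \to [m_j]$ agreeing at the shared vertices — is sent to the unique monotone map $[n] \to [m]$ restricting to the $\alpha_j$. The explicit $\vee$-decomposition formulas show these two assignments are mutually inverse on objects, and the restriction/gluing descriptions show they are mutually inverse on morphisms, which proves (a).

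For (b), under this identification the functor $\vee$ sends the tuple $(n_0,\ldots,n_{r+1})$ to $[n_0 + \cdots + n_{r+1}]$, so it is enough to verify that the corresponding $\wedge^{r+2}$-diagram has this object as its colimit, naturally in the diagram. I would check that the subsimplex inclusions $[n_j] \hookrightarrow [n_0 + \cdots + n_{r+1}]$ form a cocone, which holds by construction, and that it is universal: for any competing cocone with apex $[T]$, the legs are monotone, agree on the single-vertex overlaps of the cover $\{[n_j]\}$ of $[n]$, and send the final vertex of $[n_{j-1}]$ and the initial vertex of $[n_j]$ to a common element of $[T]$, hence glue to a unique monotone map $[n] \to [T]$, which is the required induced map. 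Equivalently, this realizes the colimit as the iterated pushout $[n_0] \cup_{[0]} \cdots \cup_{[0]} [n_{r+1}]$ along extreme-vertex inclusions, each step being the elementary pushout $[p] \cup_{[0]} [q] \cong [p+q]$ in $\Delta$; the two outer copies of $[0]$ carry only one map into the diagram and so do not change the colimit. Naturality in $f$ is then immediate, since both the map induced between the colimits and the map $\vee(g) = g$ are determined by their restrictions to the cocone legs.

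The step I expect to require the most care is the morphism-level comparison in (a): one must check that the conditions defining a natural transformation of $\wedge^{r+2}$-diagrams are precisely equivalent to the single equation $gf = f'$, being careful about the constraints imposed by all of the valley objects, and that gluing the components $\alpha_j$ along the shared vertices genuinely yields a globally monotone map $[n] \to [m]$, which uses that consecutive $\vee$-components overlap in exactly one vertex. Everything else is bookkeeping with the $\vee$-decomposition formulas.
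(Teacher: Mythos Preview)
Your proposal is correct and follows essentially the same approach as the paper: both set up the explicit bijection on objects via partial sums and their differences, handle morphisms by restriction/gluing of the $\vee$-components subject to the endpoint-preservation conditions, and verify the colimit in (b) by directly checking that a cocone out of the $\wedge^{r+2}$-diagram is the same data as a monotone map out of $[n_0+\cdots+n_{r+1}]$. Your closing caveat about carefully tracking the constraints imposed by all valley objects is exactly the point where the argument needs attention, and matches where the paper spends its effort.
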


\begin{proof}
\begin{enumerate}
	\item Given such a diagram, construct a map $g : {[r]} \to {[n_0 + \cdots + n_{r+1}]}$ sending $i$ to $n_0 + \cdots + n_i$ for $0 \le i \le r$.  Conversely given a map $g : {[r]} \to {[n]}$, construct such a diagram by setting
\begin{align*}
	n_0 & := g(0) \\
	n_i & := g(i) - g(i-1) \quad \text{\normalfont{for }} 1 \le i \le r, \\
	n_{r+1} & := n - g(r).
\end{align*}
These constructions are easily checked to be inverse to one another, defining a bijection between objects in ${[r]} / \Delta$ and $\Fun{\cat{V}^{r+1}}{\Delta}$.  

Natural transformations in $\Fun{\cat{V}^{r+1}}{\Delta}$ from the diagram given by $({[n_0]},\ldots,{[n_{r+1}]})$ to another one given by $({[m_0]},\ldots,{[m_{r+1}]})$ correspond to tuples of maps $f_i : {[n_i]} \to {[m_i]}$ such that $f_0,\ldots,f_r$ preserve the maximum element and $f_1,\ldots,f_{r+1}$ preserve the minimum element. Morphisms in ${[r]} / \Delta$ from $g : {[r]} \to {[n]}$ to $h : {[r]} \to {[m]}$ amount to a family of monotone maps like this:
\begin{align*}
	f_0 \: : \: & \{0,\ldots,g(0)\} & \longrightarrow {} & \{0,\ldots,h(0)\} \\
	f_i \: : \: & \{g(i-1),\ldots,g(i)\} & \longrightarrow {} & \{h(i-1),\ldots,h(i)\} \quad \text{\normalfont{for }} 1 \le i \le r, \\
	f_{r+1} \: : \: & \{g(r),\ldots,n\} & \longrightarrow {} & \{h(r),\ldots,m\},
\end{align*}
such that $f_0,\ldots,f_r$ preserve the maximum element and $f_1,\ldots,f_{r+1}$ preserve the minimum element. These two types of morphisms are in an obvious bijection, matching the bijection on objects defined above and preserving composition.

\item The composite functor $\Fun{\cat{V}^{r+1}}{\Delta} \cong {[r]} / \Delta \atol{\vee} \Delta$ indeed sends the pictured diagram to ${[n_0 + \cdots + n_{r+1}]}$, so it remains to show that this is a colimit.

	A cocone from this diagram to some ${[m]}$ consists of $r+1$ elements $x_0 \le \ldots \le x_r$ in ${[m]}$ and monotone maps $f_i : {[n_i]} \to {[m]}$ for $i=0,\ldots,r+1$ satisfying
\[
	f_{i+1}(0) = x_i, \qquad f_i(n_i) = x_i
\]
whenever $i=0,\ldots,r$. This data uniquely determines a map $f : {[n_0 + \cdots + n_{r+1}]} \to {[m]}$ by defining, for any $i = 0,\ldots,r+1$ and $0 \le j \le n_i$,
\[
	f(n_0+\cdots+n_{i-1}+j) \coloneqq f_i(j),
\]
where the above compatibility conditions between the $f_i$ guarantee that this is well-defined. $f$ is by definition monotone on every subset from $n_0+\cdots + n_{i-1}$ to $n_0+\cdots+n_i$, which implies monotonicity overall. $f$ restricts to $f_i$ along the inclusions ${[n_i]} \to {[n_0+ \cdots + n_{r+1}]}$ sending $0$ to $n_0+\cdots+n_{i-1}$ and $n_i$ to $n_0 + \cdots + n_i$. Since these inclusions are moreover jointly surjective, this property uniquely determines $f$. \qedhere
\end{enumerate}
\end{proof}

This equivalent perspective motivates the following alternative notation for $\vee$.

\begin{defn}
	For any finite sequence $n_0,\ldots,n_{r+1} \in \N$, define the \emph{$\vee$-product} ${[n_0]} \vee \cdots \vee {[n_{r+1}]}$ as ${[n_0+ \cdots + n_{r+1}]}$.  Furthermore, for maps $f_i : {[n_i]} \to {[m_i]}$ in $\Delta$ with $i=0,\ldots,r+1$, their $\vee$-product
\begin{equation}
	\label{eq:star_product_morphisms}
	f_0 \vee \cdots \vee f_{r+1} \: : \: {[n_0]} \vee \cdots \vee {[n_{r+1}]} \longrightarrow {[m_0]} \vee \cdots \vee {[m_{r+1}]}
\end{equation}
is defined (as above) precisely when the $f_0,\ldots,f_r$ preserve maximum elements and the $f_1,\ldots,f_{r+1}$ preserve minimum elements.
\end{defn}

More generally, the $\vee$-product of a finite sequence of diagrams of the same shape exists precisely when all morphisms in the diagrams satisfy these preservation conditions. In the binary case, $\vee$ defines a functor $\Delta_{\max} \times \Delta_{\min} \to \Delta$, where $\Delta_{\max},\Delta_{\min}$ are the subcategories of $\Delta$ containing all maps which preserve the maximal (resp.~minimal) element. The intersection of these subcategories, containing maps which preserve both endpoints, is the category of active maps $\Delta_{\mathrm{act}}$ of \cite[2.4]{GKT1}. The restriction of our $\vee$ to $\Delta_{\mathrm{act}} \times \Delta_{\mathrm{act}}$ is precisely the amalgamated ordinal sum functor $\vee$ in that setting, which by \cite[Lemma 6.2]{GKT1} agrees with the ordinal sum on $\Delta_+^{\op} \cong \Delta_{\mathrm{act}}$.  In this sense, it is appropriate to view $\vee$ as an ordered sum of the \emph{edges}, not vertices, of the ordinals $[n]$ in $\Delta$. It is straightforward to check that $\vee$ is unital (with respect to $[0]$) and associative in the appropriate senses.

We can also express the extraction of each $\vee$-component as a colimit.  While perhaps the more intuitive relationship between a $\vee$-product and its components is the inclusion ${[n_i]} \to {[n_0]} \vee \cdots \vee {[n_{r+1}]}$, more helpful for proving that $\vee$-products reflect pushouts is the surjective map ${[n_0]} \vee \cdots \vee {[n_{r+1}]} \to {[n_i]}$ acting as the identity on the component ${[n_i]}$ and as the constant map to $0$ or $n_i$ on the components ${[n_j]}$ for $j < i$ or $i < j$ respectively.

\begin{lemma}
\label{lem:component_as_colim}
Given $g : {[r]} \to {[n]}$ exhibiting ${[n]}$ as ${[n_0]} \vee \cdots \vee {[n_{r+1}]}$ and $0 \le i \le r + 1$, let
\[
	g_i^-,g_i^+ \: : \: {[1]} \longrightarrow {[n]}
\]
be the maps with
\begin{align*}
	g_i^-(0) & = 0,				& g_i^+(0) & = n_0 + \cdots + n_i,	\\
	g_i^-(1) & = n_0+\cdots+n_{i-1},	& g_i^+(1) & = n.
\end{align*}
Then the $\vee$-component ${[n_i]}$ is the colimit of the following diagram.
 $$
\begin{tikzcd}[column sep=0]
	& {[1]} \arrow{dl} \arrow{dr}{g_i^-} & & {[1]}  \arrow{dl}[swap]{g_i^+} \arrow{dr} \\
	{[0]} & & {[n_0]} \vee \cdots \vee {[n_{r+1}]} & & {[0]}
\end{tikzcd}
$$
\end{lemma}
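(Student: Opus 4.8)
The plan is to realize the asserted colimit as a pushout and verify the universal property directly on ordinals, exactly as in the proof of \Cref{lem:star_as_colim}(2). First I would observe that the diagram in question is a span: the two maps ${[1]} \to {[0]}$ are the unique maps, so taking the pushout of the left-hand cospan ${[0]} \leftarrow {[1]} \xrightarrow{g_i^-} {[n_0]} \vee \cdots \vee {[n_{r+1}]}$ has the effect of collapsing the initial segment $\{0,\ldots,n_0+\cdots+n_{i-1}\}$ of ${[n_0]} \vee \cdots \vee {[n_{r+1}]}$ to a single point, and likewise the right-hand cospan collapses the terminal segment $\{n_0+\cdots+n_i,\ldots,n\}$; the colimit of the whole diagram glues these together. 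So the claim is that collapsing both the initial segment below $g_i^-(1) = n_0+\cdots+n_{i-1}$ and the terminal segment above $g_i^+(0) = n_0 + \cdots + n_i$ yields precisely ${[n_i]} = \{n_0+\cdots+n_{i-1},\ldots,n_0+\cdots+n_i\}$, together with the surjective map ${[n_0]} \vee \cdots \vee {[n_{r+1}]} \to {[n_i]}$ described in the paragraph preceding the lemma.

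The cleanest route is to check the universal property by hand. A cocone on the diagram consists of an object ${[m]}$ together with maps making the two triangles over ${[0]}$ and the compatibilities with ${[n_0]}\vee\cdots\vee{[n_{r+1}]}$ commute; unwinding, a cocone is exactly a monotone map $u : {[n_0]} \vee \cdots \vee {[n_{r+1}]} \to {[m]}$ that is constant on $\{0,\ldots,n_0+\cdots+n_{i-1}\}$ (because it factors through ${[0]}$ via $g_i^-$) and constant on $\{n_0+\cdots+n_i,\ldots,n\}$ (because it factors through ${[0]}$ via $g_i^+$). Such a $u$ is determined by its restriction to the middle block $\{n_0+\cdots+n_{i-1},\ldots,n_0+\cdots+n_i\} = {[n_i]}$, and conversely any monotone map ${[n_i]} \to {[m]}$ extends uniquely to such a $u$ by propagating its endpoint values outward. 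This gives a natural bijection between cocones and monotone maps out of ${[n_i]}$, i.e.\ exhibits ${[n_i]}$ as the colimit, with the colimit coprojection ${[n_0]}\vee\cdots\vee{[n_{r+1}]} \to {[n_i]}$ being the surjection that is the identity on ${[n_i]}$ and constant on the flanking components — which is what the statement asserts.

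Alternatively, and perhaps more in the spirit of the surrounding development, one can identify this diagram as (an instance of) the $\wedge^{r+2}$-shaped diagram from \Cref{lem:star_as_colim} applied to the $\vee$-decomposition of ${[n_i]}$ determined by $g_i^-$ and $g_i^+$: these two maps ${[1]} \to {[n]}$ pick out, inside ${[n]}$, the single edge $\{0,n_0+\cdots+n_{i-1}\}$ and the single edge $\{n_0+\cdots+n_i, n\}$, exhibiting ${[n_0]}\vee\cdots\vee{[n_{r+1}]}$ as ${[n_0+\cdots+n_{i-1}]} \vee {[n_i]} \vee {[n_{i+1}+\cdots+n_{r+1}]}$, and then \Cref{lem:component_as_colim} for the middle component of a threefold $\vee$-product with the outer spans over ${[0]}$ is precisely \Cref{lem:star_as_colim}(2) for a $\wedge^3$ diagram. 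I expect the only mild obstacle to be bookkeeping: being careful that $g_i^-$ and $g_i^+$ really do land where claimed when $i=0$ or $i=r+1$ (so that one of the flanking segments is empty, i.e.\ ${[n_i]}$ abuts an endpoint of ${[n]}$), and that the two collapses do not interfere — they overlap exactly in nothing when $n_i \ge 1$ and overlap in the single shared vertex when $n_i = 0$, in which case the colimit is correctly ${[0]}$. Handling these boundary cases uniformly, rather than as exceptions, is the one place where a little care is needed.
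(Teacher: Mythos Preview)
Your main argument (the second paragraph, verifying the universal property by identifying cocones with monotone maps $u : [n] \to [m]$ constant on the two flanking segments and hence determined by their restriction to $[n_i]$) is exactly the paper's proof, just stated with more detail. The paper dispatches it in two sentences, but the content is identical.

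One small caution: your alternative route via \Cref{lem:star_as_colim} does not work as you describe it. That lemma computes the $\vee$-\emph{product} as the colimit of a $\wedge$-shaped diagram with $[0]$'s at the valleys and the components at the peaks; the diagram here has the opposite pattern ($[1]$'s at the peaks mapping \emph{down} to $[0]$ and $[n]$), so it is not literally an instance of that lemma, and there is no way to read off a single $\vee$-component from it. Your direct universal-property argument is the right one; drop the alternative.
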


\begin{proof}
A cocone out of this diagram is precisely a map ${[n_0]} \vee \cdots \vee {[n_{r+1}]} \to {[m]}$ constant on each of the subobjects ${[n_0]} \vee \cdots \vee {[n_{i-1}]}$ and ${[n_{i+1}]} \vee \cdots \vee {[n_{r+1}]}$. These maps are in obvious bijection with maps ${[n_i]} \to {[m]}$, so ${[n_i]}$ is the colimit of the diagram.
\end{proof}

\section{Pushout squares in $\Delta$}
\label{sec:pushout_char}

We provide three different characterizations of the pushout squares in $\Delta$, first in terms of $\vee$-products, then in terms of composition, and lastly a combination of the two. We then use our techniques to additionally characterize those squares of coface maps which are sent to pushouts by the forgetful functor $\Delta \to \Set$.

\subsection*{Pushouts via $\vee$-products}

With the machinery of $\vee$-decompositions and $\vee$-products in place, we can now apply it to pushouts.

\begin{prop}
\label{prop:preservation_reflection}
Let 
	\[
		\begin{tikzcd}
			{[m_i]} \ar{r}{f_i} \ar{d}[swap]{g_i}	& {[p_i]} 	\\
			{[q_i]} 			& 
		\end{tikzcd}
	\]
	for $i = 0,\ldots,r+1$ be a sequence of spans in $\Delta$ whose $\vee$-product exists. Then:
	\begin{enumerate}
		\item\label{pushout_product} If the above spans all have pushouts, then the $\vee$-product of these pushout squares exists and is a pushout square for the $\vee$-product span.
		\item\label{pushout_decompose} Conversely, if the $\vee$-product span has a pushout, then so do the above spans, and the $\vee$-product of their pushouts is again the pushout of the $\vee$-product span.
	\end{enumerate}
\end{prop}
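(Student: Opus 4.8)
The plan is to exploit the colimit descriptions of the $\vee$-product and of its components established in \Cref{lem:star_as_colim} and \Cref{lem:component_as_colim}, together with the elementary fact that colimits commute with colimits. The key point is that a pushout is a colimit, and the $\vee$-product of a sequence of diagrams of shape $\cat{J}$ is (by \Cref{lem:star_as_colim}) itself a colimit of a diagram built out of the individual $\cat{J}$-diagrams glued along copies of $[0]$ and $[1]$; hence forming the $\vee$-product of pushouts amounts to a nested colimit that can be computed in the other order.

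For part \ref{pushout_product}, I would proceed as follows. First fix the sequence of spans and assume each has a pushout square $P_i$. By \Cref{lem:star_as_colim}(i), specifying the $\vee$-product of the $r+2$ pushout-shaped diagrams is the same as specifying a single diagram of shape $\wedge^{r+2}$ "decorated" with spans: each node $[n_i]$, $[p_i]$, $[q_i]$ in the $i$-th span, with the gluing maps from $[0]$ identifying the appropriate max/min vertices (this uses the hypothesis that the $\vee$-product of the spans exists, i.e.\ the relevant endpoint-preservation conditions hold, so that all the required gluing maps are available). The $\vee$-product of the spans is then the colimit over $\wedge^{r+2}$ of this decorated diagram, evaluated pointwise. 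Now I want to show that $\bigvee_i P_i$ — which makes sense since each $P_i$ is built from maps that still preserve the relevant endpoints, the new corner $[n_0'\vee\cdots]$ being assembled as a colimit over $\wedge^{r+2}$ as well — is a pushout of the $\vee$-product span. This is a Fubini/interchange argument: the colimit defining the corner of $\bigvee_i P_i$ can be taken either by first forming each pushout $P_i$ and then gluing along $\wedge^{r+2}$, or by first gluing the spans along $\wedge^{r+2}$ and then forming the pushout; both compute the same colimit of the combined diagram indexed by $\wedge^{r+2} \times (\text{span shape})$. I would phrase this cleanly by checking the universal property directly: a cocone under the $\vee$-product span into some $[m]$ restricts, via the component projections ${[n_0]}\vee\cdots\vee{[n_{r+1}]} \to {[n_i]}$ of \Cref{lem:component_as_colim}, to a cocone under each span $i$, hence factors uniquely through each $P_i$; these factorizations are compatible with the $\wedge^{r+2}$-gluing (again using \Cref{lem:component_as_colim} to see that compatibility of the original cocone forces compatibility of the pieces), and therefore assemble via \Cref{lem:star_as_colim}(ii) to a unique map out of the corner of $\bigvee_i P_i$. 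Existence and uniqueness of this induced map is exactly the statement that $\bigvee_i P_i$ is a pushout.

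For part \ref{pushout_decompose}, the converse, I would run a similar argument in reverse, now assuming the $\vee$-product span has a pushout. The idea is that each individual span is recovered from the $\vee$-product span by applying the component-extraction colimit of \Cref{lem:component_as_colim}; since colimits commute, applying this extraction to the pushout of the $\vee$-product span produces a pushout of the $i$-th span. Concretely, postcomposing the $\vee$-product span (and its pushout corner) with the projection ${[n_0]}\vee\cdots\vee{[n_{r+1}]} \to {[n_i]}$, and using that this projection is itself a colimit (so it preserves the pushout), exhibits a pushout for the $i$-th span; then part \ref{pushout_product}, applied to these newly-found pushouts, shows their $\vee$-product is again the pushout of the $\vee$-product span, giving the asserted compatibility. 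Alternatively, one can simply note that by \ref{pushout_product} the $\vee$-product of any pushouts of the individual spans is a pushout of the $\vee$-product span, and pushouts are unique up to canonical iso, so it suffices to prove the individual spans have pushouts at all — which is the colimit-commutation observation just made.

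I expect the main obstacle to be bookkeeping rather than conceptual: one must track carefully which maps preserve maxima and which preserve minima so that every $\vee$-product invoked (of the spans, of the pushout squares, of the cocones) is actually defined, and one must verify that the universal maps produced by the pushout property of each $P_i$ indeed satisfy the endpoint-preservation conditions needed to be $\vee$-producted together. The cleanest route is probably to avoid an explicit "double colimit" formalism and instead verify the universal property of the $\vee$-product of pushout squares by hand, feeding cocones through the component projections of \Cref{lem:component_as_colim} and reassembling via \Cref{lem:star_as_colim}(ii); the endpoint conditions then essentially take care of themselves because the component projections are constant (hence endpoint-collapsing) outside the $i$-th block.
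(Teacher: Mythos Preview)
Your proposal is correct and follows essentially the same route as the paper: use \Cref{lem:star_as_colim} to see the $\vee$-product as a colimit and invoke ``colimits commute with colimits'' for part~\ref{pushout_product}, and use \Cref{lem:component_as_colim} to extract each component as a colimit and again commute colimits for part~\ref{pushout_decompose}. The one point the paper handles more explicitly than you do is the endpoint bookkeeping you flag at the end: before invoking anything, the paper proves the small lemma that in any commuting square with jointly surjective cospan legs, if the span legs preserve the maximum (resp.\ minimum) then so do the cospan legs---this is exactly what guarantees the $\vee$-product of the pushout squares is defined, and it is worth isolating rather than hoping it ``takes care of itself.''
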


\begin{proof}
	We first show that a commuting square
	\[
		\begin{tikzcd}
			{[m]} \ar{r}{f} \ar{d}[swap]{g}	& {[p]} \ar{d}[swap]{h}	\\
			{[q]} \ar{r}{k}		& {[n]}
		\end{tikzcd}
	\]
	with jointly surjective $h$ and $k$ is such that if $f$ and $g$ preserve the maximum element, then so do $h$ and $k$. Indeed since the square commutes, the assumption on $f$ and $g$ implies that it is enough that one of $h$ or $k$ preserves the maximum element and the other one follows. But clearly at least one does since $h$ and $k$ must be jointly surjective. A similar argument shows that if $f$ and $g$ preserve the minimum element, then so do $h$ and $k$. 

	\begin{enumerate}
		\item The $\vee$-product of the pushout squares exists since the relevant preservation conditions are implied by the statement from the previous paragraph. Using the description of $\vee$-products as colimits then shows that the resulting $\vee$-product square is a pushout as well since colimits commute with colimits.

		\item For $0 \leq i \leq r+1$, consider the following diagram $D : \cat{J} \to \cat{Span}(\Delta)$, where $\cat{Span}$ denotes the category of spans and $\cat{J}$ is the shape of the diagram in Lemma~\ref{lem:component_as_colim}. The objects in $\cat{J}$ sent to ${[0]}$ and ${[1]}$ in Lemma~\ref{lem:component_as_colim} are sent by $D$ to the constant spans at ${[0]}$ and ${[1]}$ respectively, and the object sent to ${[n_0]} \vee \cdots \vee {[n_{r+1}]}$ in Lemma~\ref{lem:component_as_colim} is sent to the $\vee$-product span of the postulated sequence, with the analogous maps as in Lemma~\ref{lem:component_as_colim} for each of $q,m,p$.  
		
			Each of these spans has a pushout, with the constant spans pushing out to ${[0]}$ and ${[1]}$ respectively and the $\vee$-product span having a pushout by assumption. The functor $\cat{J} \to \Delta$ picking out the pushout objects has a colimit since it is of the form in Lemma~\ref{lem:component_as_colim}, selecting the $i$th component of the pushout.  The diagram $D$ therefore has an overall colimit. As colimits commute with colimits, this means that the $i$th component of the pushout of the $\vee$-product span is the pushout of the $i$th span above.	
		\qedhere
	\end{enumerate}
\end{proof}

This lets us reduce the characterization of pushouts in $\Delta$ to pushouts among the minimal $\vee$-components, the squares in which $m$ as in the square above is $0$ or $1$.
This is our first result on the decomposition of pushouts in $\Delta$.


\begin{thm}
	\label{thm:pushouts_delta_star}
	A span in $\Delta$ has a pushout if and only if its canonical $\vee$-decomposition is made up of the spans in the following pushout squares (and their mirror images), in which case its pushout is the corresponding $\vee$-product of the pushout squares below:
$$
	\begin{tikzcd}
		{[0]} \arrow{r}{p} \arrow[equals]{d}[swap]{} & {[p]} \arrow[equals]{d}[swap]{} \\
		{[0]} \arrow{r}{p} & {[p]}
	\end{tikzcd}\qquad\begin{tikzcd}
		{[1]} \arrow{r}{0p} \arrow[equals]{d}[swap]{} & {[p]} \arrow[equals]{d}[swap]{} \\
		{[1]} \arrow{r}{0p} & {[p]}
	\end{tikzcd}\qquad\begin{tikzcd}
		{[1]} \arrow{r}{0p} \arrow{d}[swap]{} & {[p]} \arrow{d}[swap]{} \\
		{[0]} \arrow[equals]{r}{} & {[0]}
	\end{tikzcd}\qquad\begin{tikzcd}
		{[0]} \arrow{r}{0} \arrow[equals]{d}[swap]{} & {[p]} \arrow[equals]{d}[swap]{} \\
		{[0]} \arrow{r}{0} & {[p]}
	\end{tikzcd}
$$
\end{thm}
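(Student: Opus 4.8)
The plan is to bootstrap off \Cref{prop:preservation_reflection}. Given a span $[p] \leftarrow [m] \rightarrow [q]$ with legs $f$ and $g$, its canonical $\vee$-decomposition expresses it as a $\vee$-product of $m+2$ spans indexed by $i = 0, \ldots, m+1$: the $i$-th component span is obtained by restricting $f$ and $g$ to the subsimplex $\{0\}$ of $[m]$ (for $i=0$), to $\{i-1,i\}$ (for $1\le i\le m$), or to $\{m\}$ (for $i=m+1$), landing in the corresponding subsimplices of $[p]$ and $[q]$. Reading off these restrictions shows that the legs of the $0$-th component both send their point to the \emph{maximal} vertex of the target, the legs of the $(m+1)$-st component both send their point to the \emph{minimal} vertex, and the legs of each intermediate component send $0$ to the minimal and $1$ to the maximal vertex of the target. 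By \Cref{prop:preservation_reflection}, the original span has a pushout if and only if all $m+2$ component spans do, and then its pushout is the $\vee$-product of the component pushouts. So it suffices to establish: (i) each of the four displayed squares, and each mirror image, is a pushout; and (ii) a span of one of the three shapes just described has a pushout if and only if it is, up to the identification of mirror images, the span of one of the four displayed squares.

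For (i), three of the four squares contain a parallel pair of identity arrows and so are pushouts automatically; only the third square requires an argument, and there a short computation shows that any cocone under the span $[p] \xleftarrow{0p} [1] \rightarrow [0]$ forces the map out of $[p]$ to agree on the vertices $0$ and $p$ and hence to be constant, so that $[0]$ equipped with the evident maps is the pushout. Mirror images are pushouts precisely when the corresponding original squares are.

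For (ii), the ``if'' direction is contained in (i), so consider ``only if''. Suppose a component span has a pushout $[P]$, with legs $\iota_1$ out of $[p_i]$ and $\iota_2$ out of $[q_i]$. If the span has $0$-th-component shape $[a] \leftarrow [0] \rightarrow [b]$ with both legs hitting the maxima, and $a, b \ge 1$, then we derive a contradiction from two cocones into $[1]$: in the first, the map $[a]\to[1]$ sends $a$ to $1$ and all smaller vertices to $0$ while $[b]\to[1]$ is constant at $1$, and the second interchanges the roles of the two legs. Comparing the two induced maps $[P]\to[1]$ at $\iota_1(0)$ and $\iota_2(0)$ yields simultaneously $\iota_1(0)<\iota_2(0)$ and $\iota_1(0)>\iota_2(0)$. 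An entirely analogous pair of cocones into $[1]$ excludes an $(m+1)$-st-component-shaped span $[a]\leftarrow[0]\rightarrow[b]$ with both legs hitting the minima and $a,b\ge1$, and a similar pair of cocones into $[2]$ excludes an intermediate-shaped span $[a]\leftarrow[1]\rightarrow[b]$ with $a,b\ge2$ (now comparing the images of $\iota_1(1)$ and $\iota_2(1)$). In every remaining case one of $a,b$ equals $0$---or is at most $1$ for an intermediate component---and then the component span coincides with the span of one of the four displayed squares or of a mirror image. Combining (i) and (ii) with \Cref{prop:preservation_reflection} gives both directions, and exhibits the pushout of the whole span as the $\vee$-product of these per-component pushout squares.

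I expect the fiddly part to be the purely bookkeeping task of pinning down the three shapes of a canonical $\vee$-component and then checking that each ``small'' residual case really is one of the four displayed spans (or a mirror). The substantive input---the cocone-pair arguments ruling out pushouts in the ``large'' cases---is short once the cocones are written down.
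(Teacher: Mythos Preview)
Your proposal is correct and follows essentially the same approach as the paper: reduce via \Cref{prop:preservation_reflection} to the canonical $\vee$-components, identify their three possible shapes, verify the four displayed squares are pushouts, and rule out the remaining ``large'' cases by explicit test cocones. The only cosmetic differences are that the paper uses a single cocone together with a WLOG assumption on the pushout legs (rather than your symmetric pair of cocones) and maps into $[1]$ rather than $[2]$ in the intermediate case.
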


Note that for $p = 0$ the first and fourth square coincide, and the second square for $p = 0$ is the mirror image of the third for $p = 1$, but otherwise these squares are all distinct.

\begin{proof}
By \Cref{prop:preservation_reflection}, a span has a pushout if and only if the components of its canonical $\vee$-decomposition have pushouts, which $\vee$ then preserves. It therefore remains to show that the leftmost and rightmost $\vee$-components of a pushout square are always of the forms above left and above right, respectively, and that the middle components are always one of the two middle squares above. 

In the rightmost component of a $\vee$-decomposition, all maps preserve minimal elements, so as the right square in the theorem is a trivial pushout it suffices to show that no square as below is a pushout for $p,q > 0$.
\begin{equation}\label{bad_right_square}
	\begin{tikzcd}
		{[0]} \arrow{r}{0} \arrow{d}[swap]{0} & {[p]} \arrow{d}[swap]{h} \\
		{[q]} \arrow{r}{k} & {[n]}
	\end{tikzcd}
\end{equation}
Without loss of generality we can assume $h(1) \le k(1)$. Define $\phi : {[p]} \to {[1]}$ and $\psi : {[q]} \to {[1]}$ by
\[
	\phi(0) = 0 = \psi(0), \qquad \phi(i) = 1, \qquad \psi(i) = 0, \qquad i > 0.
\]
$\phi,\psi$ commute with the span from $[0]$, but this square does not factor through the putative pushout, as this would require $\phi(1) \le \psi(1)$ by $h(1) \le k(1)$, using monotonicity of the induced map. The argument for the leftmost component is entirely analogous.

For the middle components, we first show that the center right square above is a pushout.
If $\phi : [p] \to [n]$ and $\psi : [0] \to [n]$ as above commute with the span, then $\phi(0) = \phi(p) = \psi(0) \in {[n]}$. But as $\phi$ is monotonic, it must then be constant. Therefore $\phi,\psi$ both factor through $\psi : {[0]} \to {[n]}$, which is unique with respect to this property, hence the square is a pushout.

It remains then to show that the center left and center right squares above are the only pushout squares with $[1]$ as the source and all maps preserving minimal and maximal elements, as any middle $\vee$-component must. We therefore show that the following square is not a pushout for $p,q > 1$.
\begin{equation}\label{bad_middle_square}
	\begin{tikzcd}
		{[1]} \arrow{r}{0p} \arrow{d}[swap]{0q} & {[p]} \arrow{d}[swap]{h} \\
		{[q]} \arrow{r}{k} & {[n]}
	\end{tikzcd}
\end{equation}
Again assuming $h(1) \le k(1)$, define $\phi : {[p]} \to {[1]}$ and $\psi : {[q]} \to {[1]}$ by
\[
	\phi(0) = 0 = \psi(0), \qquad \phi(p) = 1 = \psi(q), 
\]
\[
	\phi(i) = 1 \quad (0 < i < p),\qquad \psi(j) = 0 \quad (0 < j < q).
\]
$\phi,\psi$ commute with the span from $[1]$, but again there can be no induced map from $[n]$ as by monotonicity this would require $\phi(1) \le \psi(1)$.
\end{proof}

In particular, this construction shows that pushouts of coface maps are again coface maps. 
We also give an elementwise description of this characterization, which follows immediately from the theorem.

\begin{cor}
	\label{thm:pushouts_delta}
	A span
	\[
		\begin{tikzcd}
			{[m]} \arrow{r}{f} \arrow{d}[swap]{g} & {[p]} \\
			{[q]} 
		\end{tikzcd}
	\]
	has a pushout in $\Delta$ if and only if the following three conditions hold:
	\begin{enumerate}
		\item\label{inner_cond} for every $i$ with $1 \le i \le m$, we have $f(i) \le f(i-1) + 1$ or $g(i) \le g(i-1) + 1$;
		\item\label{min_cond} $f(0) = 0$ or $g(0) = 0$;
		\item\label{max_cond} $f(m) = p$ or $g(m) = q$.
	\end{enumerate}
\end{cor}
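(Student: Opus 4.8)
The plan is to unwind \Cref{thm:pushouts_delta_star} elementwise using the canonical $\vee$-decomposition of the given span. Since the canonical $\vee$-decomposition of $[m]$ is $[0] \vee [1] \vee \cdots \vee [1] \vee [0]$ with $m$ copies of $[1]$, pushing it forward along $f$ and along $g$ (\Cref{lem:opfibration}, or \Cref{cor:diagram_star_decomp} for the span as a whole) splits the span into $m+2$ component spans: a \emph{leftmost} one with source $[0]$, \emph{middle} ones indexed by $1 \le i \le m$ with source $[1]$, and a \emph{rightmost} one with source $[0]$. By \Cref{thm:pushouts_delta_star}, the span has a pushout if and only if every one of these components is one of the four displayed pushout squares or a mirror image thereof, so the task is simply to recognize each component in that list and read off what membership says about $f$ and $g$.

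First I would describe the components explicitly. Restricting $f$ and $g$ to the leftmost $\vee$-component sends the vertex of $[0]$ to $f(0)$, resp.\ $g(0)$, which are the \emph{maximal} vertices of the $\vee$-components $\{0,\ldots,f(0)\}$ of $[p]$ and $\{0,\ldots,g(0)\}$ of $[q]$; so the leftmost component is the span $[f(0)] \leftarrow [0] \rightarrow [g(0)]$ in which both legs pick out the top vertex. Dually, the rightmost component sends the vertex of $[0]$ to $f(m)$, resp.\ $g(m)$, now the \emph{minimal} vertices of $\{f(m),\ldots,p\}$ and $\{g(m),\ldots,q\}$, so it is the span $[p-f(m)] \leftarrow [0] \rightarrow [q-g(m)]$ with both legs picking the bottom vertex. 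For $1 \le i \le m$, the $i$th middle component has source $[1]$, with $f$ restricting to the map $[1] \to [f(i)-f(i-1)]$ that sends $0$ and $1$ to the bottom and top vertices, and $g$ restricting to the analogous map $[1] \to [g(i)-g(i-1)]$; so it is the span $[f(i)-f(i-1)] \leftarrow [1] \rightarrow [g(i)-g(i-1)]$ with both legs of the form ``$0p$''.

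Next I would match these against the four squares of \Cref{thm:pushouts_delta_star}. Among the listed spans with source $[0]$, the only ones whose two legs both hit the top vertex are the (mirror images of the) first square, which forces one of the two targets to be $[0]$; hence the leftmost component has a pushout precisely when $f(0) = 0$ or $g(0) = 0$, which is~\ref{min_cond}. Symmetrically, the only listed spans with source $[0]$ whose two legs both hit the bottom vertex are the (mirror images of the) fourth square, forcing $p - f(m) = 0$ or $q - g(m) = 0$, i.e.\ $f(m) = p$ or $g(m) = q$, which is~\ref{max_cond}. For source $[1]$, among the listed spans with both legs of the form ``$0p$'' exactly the second and third squares and their mirrors appear, and between them they cover exactly those cases in which one of $f(i)-f(i-1)$, $g(i)-g(i-1)$ equals $1$ (second square) or $0$ (third square); since $f,g$ are monotone this is the same as $f(i) \le f(i-1)+1$ or $g(i) \le g(i-1)+1$, which is~\ref{inner_cond}. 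The complementary case, both differences $\ge 2$, makes the $i$th component the square~\eqref{bad_middle_square} with $p,q > 1$, already seen not to be a pushout.

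I do not expect a genuine obstacle; the one thing that needs care is the bookkeeping of mirror images together with the degenerate small cases ($p=0$, or a difference equal to $0$ or $1$), which must be checked to fall consistently into the list exactly as recorded in the remark following \Cref{thm:pushouts_delta_star}.
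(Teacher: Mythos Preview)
Your proposal is correct and is exactly the elementwise unpacking of \Cref{thm:pushouts_delta_star} that the paper has in mind; the paper itself records this corollary as following ``immediately from the theorem'' without spelling out the component-by-component matching you have written down.
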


Property~\ref{inner_cond} fails if $f(i+1) > f(i) + 1$ and $g(i+1) > g(i) + 1$, as $f$ and $g$ should not both ``add an extra element'' in between two consecutive elements of ${[r]}$ as in square~\Cref{bad_middle_square}; the pushout cannot exist as these two elements cannot be totally ordered in a canonical way. The same issue arises when neither $f$ nor $g$ hit the maximum (or minimum) element of their codomains as in square~\Cref{bad_right_square}, in which case~\ref{max_cond} (or ~\ref{min_cond}) fails.
For coface maps, the necessity of having a unique total order on the union of $[p]$ and $[q]$ can be expressed as follows:

\begin{cor}
	\label{face_pushout_concrete}
	A commutative square of coface maps as below is a pushout if and only if it is a pushout in $\Set$, and for every $i = 0,\ldots,n - 1$, the edge $\{i,i+1\} \subseteq [n]$ is in the image of $h$ or $k$.
	\[
		\begin{tikzcd}
			{[m]} \arrow{r}{f} \arrow{d}[swap]{g}	& {[p]} \ar{d}[swap]{h} \\
			{[q]} \arrow{r}{k}			& {[n]}
		\end{tikzcd}
	\]
\end{cor}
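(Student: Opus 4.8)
The plan is to deduce \Cref{face_pushout_concrete} directly from the more abstract characterization \Cref{thm:pushouts_delta} (equivalently \Cref{thm:pushouts_delta_star}), by observing that for a square of \emph{coface} maps the three conditions of \Cref{thm:pushouts_delta} translate exactly into the two stated conditions: being a pushout in $\Set$, and having every edge $\{i,i+1\}$ of $[n]$ covered by the image of $h$ or $k$. Since $f,g$ are coface maps (hence injective and monotone), condition \Cref{inner_cond} already holds automatically for every internal index, because an injective monotone map between finite ordinals satisfies $f(i) - f(i-1) \ge 1$, and the only way \Cref{inner_cond} could fail is if \emph{both} $f$ and $g$ jump by more than $1$ at the same spot; so the real content of \Cref{thm:pushouts_delta} for coface maps is packaged in conditions \Cref{min_cond} and \Cref{max_cond} together with how $f$ and $g$ interleave. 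Thus I first want to re-examine what \Cref{thm:pushouts_delta} says when $f,g$ are monic.

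The cleanest route, I expect, is to argue in two directions. For the forward direction, assume the square of coface maps is a pushout in $\Delta$. Since (as remarked right after the proof of \Cref{thm:pushouts_delta_star}) pushouts of coface maps are coface maps, $h$ and $k$ are monic, and one checks that the pushout square in $\Delta$ is computed on underlying sets as the pushout in $\Set$ — indeed $[n]$ is exactly the union of the images of $h$ and $k$ glued along the image of the common composite, and the ordering is forced. To see the edge-covering condition, suppose some edge $\{i,i+1\} \subseteq [n]$ lies in neither image; then $i$ and $i+1$ are separated and one can build maps $[p] \to [1]$ and $[q] \to [1]$ collapsing $[n]$'s two ``halves'' to $0$ and $1$ in incompatible ways — this is precisely the obstruction exploited in squares \Cref{bad_right_square} and \Cref{bad_middle_square} in the proof of \Cref{thm:pushouts_delta_star}, so I would reuse that construction rather than reinvent it. For the converse, assume the square is a pushout in $\Set$ and every edge of $[n]$ is covered by $h$ or $k$; I then verify conditions \Cref{min_cond}, \Cref{max_cond}, \Cref{inner_cond} of \Cref{thm:pushouts_delta}. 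Covering the edge $\{0,1\}$ forces $0 \in \im h$ or $0 \in \im k$, which gives \Cref{min_cond} (using monicity to identify $f(0)=0$ with $0 \in \im h$ after translating through the $\Set$-pushout); covering $\{n-1,n\}$ gives \Cref{max_cond}; and \Cref{inner_cond} holds as noted because $f,g$ are monic, unless there is an internal index where both jump by $\ge 2$ — but that would create an edge of $[n]$ missed by both images, contradicting the hypothesis. Hence all three conditions hold and the square is a pushout in $\Delta$.

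I should be careful about one bookkeeping point: \Cref{thm:pushouts_delta} is phrased for a \emph{span} $[q] \xleftarrow{g} [m] \xrightarrow{f} [p]$ and asserts existence of the pushout, whereas \Cref{face_pushout_concrete} starts from a given commuting \emph{square}. So the argument needs the standard fact that if a span of coface maps has a pushout in $\Delta$, the given commuting square of coface maps over it either \emph{is} that pushout or is not, and this can be detected on underlying sets once we know $\Delta$-pushouts of coface spans are computed in $\Set$ on the nose. Concretely: the span has a pushout in $\Delta$ iff conditions \Cref{inner_cond}, \Cref{min_cond}, \Cref{max_cond} hold, and in that case the pushout square is the $\Set$-pushout square; so the given square is a $\Delta$-pushout iff it equals the $\Set$-pushout iff it is a $\Set$-pushout and the span satisfies those three conditions — and the edge-covering condition is exactly the combinatorial repackaging of \Cref{inner_cond}+\Cref{min_cond}+\Cref{max_cond} in the monic case.

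The main obstacle I anticipate is the precise translation between ``$f(i) \le f(i-1)+1$ or $g(i) \le g(i-1)+1$ for all internal $i$, plus the endpoint conditions'' and ``every edge $\{i,i+1\}$ of the $\Set$-pushout is in the image of $h$ or $k$'': one has to track how the indices of $[m],[p],[q]$ sit inside $[n]$ via the $\Set$-pushout, handle the boundary edges $\{0,1\}$ and $\{n-1,n\}$ (which correspond to \Cref{min_cond} and \Cref{max_cond} rather than to \Cref{inner_cond}), and make sure no off-by-one slips in. This is genuinely routine but fiddly; everything else is a direct appeal to \Cref{thm:pushouts_delta} and to the obstruction squares \Cref{bad_right_square}, \Cref{bad_middle_square} already constructed in the proof of \Cref{thm:pushouts_delta_star}.
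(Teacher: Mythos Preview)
Your proposal is correct in outline but takes a genuinely different route from the paper, and the two directions deserve separate comment.

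For the converse, the paper bypasses \Cref{thm:pushouts_delta} entirely: given a $\Set$-pushout satisfying the spine condition and a cocone $(\phi,\psi)$ to some $[n']$, the $\Set$-pushout property yields a unique set map $\gamma:[n]\to[n']$, and the spine condition is used \emph{directly} to check $\gamma(i)\le\gamma(i+1)$ by lifting each spinal edge $\{i,i+1\}$ to $[p]$ or $[q]$ and using monotonicity of $\phi$ or $\psi$. This is a two-line argument. Your route---verify conditions \ref{inner_cond}, \ref{min_cond}, \ref{max_cond} of \Cref{thm:pushouts_delta}, conclude the span has a $\Delta$-pushout, then argue the given square must coincide with it---also works, but it leans on the forward direction (that $\Delta$-pushouts of coface spans are $\Set$-pushouts) to make the identification, so the dependency structure is less clean.

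For the forward direction, the paper uses the $\vee$-machinery: by \Cref{thm:pushouts_delta_star} every pushout square of coface maps is a $\vee$-product of the elementary squares listed there, each of which is visibly a $\Set$-pushout satisfying the spine condition, and both properties are preserved under $\vee$-products because the colimits defining $\vee$ are preserved by $\Delta\to\Set$. Your proposal instead argues contrapositively (an uncovered spinal edge yields an obstructing cocone \`a la \Cref{bad_right_square}, \Cref{bad_middle_square}); this should go through, but you still owe the step ``$\Delta$-pushout of cofaces $\Rightarrow$ $\Set$-pushout'' an actual argument---your ``one checks'' is exactly where the paper invokes the $\vee$-decomposition.

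One expository nit: your opening claim that condition~\ref{inner_cond} ``already holds automatically'' for monic $f,g$ is the opposite of what you mean; injectivity gives $f(i)-f(i-1)\ge 1$, whereas \ref{inner_cond} asks that at least one of $f,g$ satisfies $\le 1$. Your next clause gets it right.
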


We call the second property the \emph{spine condition}. Considering $[n]$ as the geometric $n$-simplex, the extra condition states that $h$ and $k$ must jointly cover the spine of ${[n]}$.

\begin{proof}
The square is a pushout of sets if and only if it induces a bijection
\[
	[n] \;\cong\; \left( [p] \sqcup [q] \right) / \!\sim,
\]
where $\sim$ is the equivalence relation generated by $f(i) \sim g(i)$ for all $i \in [m]$. This is the case for each pushout square of coface maps in \Cref{thm:pushouts_delta_star}, and is preserved by $\vee$-products as the colimits in \Cref{lem:star_as_colim} defining the $\vee$-product are preserved by the forgetful functor $\Delta \to \Set$ and colimits commute with pushouts. The same argument applies to the spine condition, which holds in each pushout square of coface maps in \Cref{thm:pushouts_delta_star} and is preserved by $\vee$-products.

To see that these two conditions are sufficient for the square to be a pushout in $\Delta$, we could show that upon taking the canonical $\vee$-decomposition of the square, these conditions guarantee each component to be of one of the forms in \Cref{thm:pushouts_delta_star}. However, we give a more direct proof demonstrating the uniqueness of the total order on $[n]$ when the square is a pushout of sets satisfying the spine condition.

Let $\phi : [p] \to [n']$, $\psi : [q] \to [n']$ be maps in $\Delta$ satisfying $\phi f = \psi g$. As the square above is a pushout of sets, there is a unique map of sets $\gamma : [n] \to [n']$ with $\phi = \gamma h$ and $\psi = \gamma k$. It remains to show that $\gamma$ is a morphism in $\Delta$, which is to say, that $\gamma$ is monotone. By transitivity it suffices to show that $\gamma(i) \le \gamma(i+1)$ for all $\{i,i+1\} \subseteq [n]$. By the spine condition, noting that $h,k$ are monotone and monic, each such pair $\{i,i+1\}$ lifts along $h$ or $k$ to some $\{j,j+1\}$ in $[p]$ or $[q]$, respectively. We can assume without loss of generality that $\{j,j+1\} \subseteq [p]$, so that as $\phi$ is monotone we have
$$\gamma(i) = \gamma(h(j)) = \phi(j) \le \phi(j+1) = \gamma(h(j+1)) = \gamma(i+1).$$
Therefore, $\gamma$ is monotone and so $[n]$ is a pushout in $\Delta$.
\end{proof}

Taking $[n'] = [n]$ in this proof shows that any jointly surjective $\phi,\psi$ to $[n]$ induces the identity map $[n] \to [n]$, so that $\phi = h$ and $\psi = k$. This is the uniqueness property alluded to above for the order on the union of $[p],[q]$.

\subsection*{Pushouts via composition}

We now proceed to describe another characterization of pushout squares in $\Delta$ using factorization and composition of squares.

\begin{defn}
The {\em defect} $\delta_f$ of a map $f : {[n]} \to {[m]}$ in $\Delta$ is
\[
	\delta_f \coloneqq (|[n]| - |\im(f)|) + (|[m]| - |\im(f)|) = n + m + 2 - 2|\im(f)|.
\]
\end{defn}

The idea of the defect is to measure how far a map in $\Delta$ is from an identity: all identity maps have defect 0, every element of the codomain outside the image adds 1 to the defect, and every pair of adjacent elements in the domain which are identified by $f$ adds 1 to the defect. Conveniently, the defect is additive with respect to $\vee$:


\begin{lemma}
	\label{lem:star_defect_compatible}
	For maps $f : {[n_0]} \to {[m_0]}$ with $f(n_0) = m_0$ and $g : {[n_1]} \to {[m_1]}$ with $g(0) = 0$, we have $\delta_{f \vee g} = \delta_f + \delta_g$.
\end{lemma}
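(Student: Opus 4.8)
The statement is that the defect is additive under the $\vee$-product: for $f : [n_0] \to [m_0]$ with $f(n_0) = m_0$ and $g : [n_1] \to [m_1]$ with $g(0) = 0$, one has $\delta_{f \vee g} = \delta_f + \delta_g$. Since the defect is defined as $\delta_f = n_0 + m_0 + 2 - 2|\im f|$ and similarly for $g$, and since $[n_0] \vee [n_1] = [n_0 + n_1]$ and $[m_0] \vee [m_1] = [m_0 + m_1]$, the left-hand side is $(n_0 + n_1) + (m_0 + m_1) + 2 - 2|\im(f \vee g)|$. Comparing with $\delta_f + \delta_g = n_0 + n_1 + m_0 + m_1 + 4 - 2|\im f| - 2|\im g|$, the claim reduces to the single identity $|\im(f \vee g)| = |\im f| + |\im g| - 1$.

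So the plan is: first, recall from the definition of the $\vee$-product (equivalently, from the colimit description in \Cref{lem:star_as_colim}) that $f \vee g$ acts as $f$ on $\{0,\ldots,n_0\} \subseteq [n_0 + n_1]$ after identifying this with $\{0,\ldots,m_0\} \subseteq [m_0+m_1]$, and as (a shifted copy of) $g$ on $\{n_0,\ldots,n_0+n_1\}$, mapping into $\{m_0,\ldots,m_0+m_1\}$. Hence the image of $f \vee g$ is the union $\im f \cup (\,m_0 + \im g\,)$, viewing $\im g \subseteq [m_1] = \{0,\ldots,m_1\}$ shifted by $m_0$. Second, observe that the two sets $\im f$ and $m_0 + \im g$ overlap exactly in the single point $m_0$: indeed $f(n_0) = m_0$ puts $m_0 \in \im f$, and $g(0) = 0$ puts $m_0 = m_0 + 0 \in m_0 + \im g$; conversely $\im f \subseteq \{0,\ldots,m_0\}$ and $m_0 + \im g \subseteq \{m_0,\ldots,m_0+m_1\}$, so their intersection is contained in $\{m_0\}$. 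Therefore $|\im(f \vee g)| = |\im f| + |m_0 + \im g| - 1 = |\im f| + |\im g| - 1$, and the arithmetic above finishes the proof.

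The main (and only mild) obstacle is just being careful about the bookkeeping of the shifts and the basepoint conditions: one must use both hypotheses $f(n_0) = m_0$ and $g(0) = 0$ — the first to ensure $m_0 \in \im f$ and the second to ensure the shifted image of $g$ starts exactly at $m_0$ — so that the two pieces of the image meet in precisely one point rather than zero or more. Once $|\im(f \vee g)| = |\im f| + |\im g| - 1$ is established, the rest is a one-line substitution into the definition of $\delta$, so I would keep that computation explicit but brief.
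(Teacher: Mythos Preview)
Your proposal is correct and follows essentially the same approach as the paper: both reduce the claim to the identity $|\im(f \vee g)| = |\im f| + |\im g| - 1$, observe that the two pieces of the image meet exactly at the single point $m_0$, and then substitute into the definition of $\delta$. Your write-up is in fact slightly more explicit than the paper's in justifying why the intersection is exactly $\{m_0\}$ using both hypotheses.
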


Note that the equations $f(n_0) = m_0$ and $g(0) = 0$ are relevant only for ensuring that the $\vee$-product $f \vee g$ exists.

\begin{proof}
	First observe that $f \vee g : {[n_0+n_1]} \to {[m_0+m_1]}$. Since both the left and right parts of $f \vee g$ have $m_0 \in {[m_0+m_1]}$ in their image, and the images are otherwise disjoint, we have $|\im(f \vee g)| = |\im(f)| + |\im(g)| - 1$.  We then calculate
\begin{align*}
	\delta_{f \vee g}	& = n_0+n_1+m_0+m_1+2-2|\im(f)|-2|\im(g)|+2 \\
				& = (n_0+m_0+2-2|\im(f)|) + (n_1+m_1+2-2|\im(g)|) \\
				& = \delta_f + \delta_g. \qedhere
\end{align*}
\end{proof}

The maps with defect $1$ are exactly the generating coface and codegeneracy maps, since they either identify one pair of elements in the domain or map injectively into a codomain with one additional element.  From this perspective, the defect of $f$ can be seen as counting the minimal number of generating maps in $\Delta$ that $f$ factors into, since each identification in the domain requires a generating codegeneracy and each element in the codomain outside the image requires a generating coface map.  A factorization of $f$ into such a minimal number of generators is what we call {\em efficient}, and in this case the defects of the factors (all $1$) add up to the total defect of $f$. More generally, we declare the following.

\begin{defn}
A factorization $f = h \circ g$ of a map in $\Delta$ is {\em efficient} if $\delta_f = \delta_h + \delta_g$.
\end{defn}

All maps $h$ and $g$ satisfy $\delta_{h \circ g} \le \delta_h + \delta_g$ as $h \circ g$ can, at worst, be factored into the defect 1 maps which generate $h$ and $g$. However, as an example of an inefficient factorization, consider ${[0]} \atol{0} {[2]} \atol{011} {[1]}$, which compose to ${[0]} \atol{0} {[1]}$.  The composite has defect 1, but the factors have respective defects 2 and 1 adding up to 3, hence this factorization is not efficient.  

Any map $f$ has an efficient factorization into $\delta_f$ generating maps as described above; this can be chosen such that the generating coface maps follow the degeneracies, as the Reedy factorization\footnote{See for example \cite[Section~14.2]{catht}.} of a map is efficient.  In fact, let $f = ds$ be the Reedy factorization, so that $s$ is a codegeneracy map and $d$ a coface map. Then $\delta_f$ is the sum of the degree changes of $d$ and $s$, which determine the number of coface and codegeneracy maps in such an efficient factorization of $f$ into generators. 

For our purposes, efficiency of a factorization guarantees that the factors of a map do not take unnecessarily large steps that could prevent the factorization from extending to pushout squares of the composite.

\begin{prop}
	\label{prop:pushout_factor}
	For a pushout square as below left and an efficient factorization $f = f_1 \circ f_0$, the square factors into a horizontal composite of pushout squares as below right. 
\end{prop}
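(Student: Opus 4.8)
Write the efficient factorization as $f_0 \colon [m] \to [m']$ followed by $f_1 \colon [m'] \to [p]$, so that the claimed factorization has the shape
\[
	\begin{tikzcd}
		{[m]} \arrow{r}{f_0} \arrow{d}[swap]{g}	& {[m']} \arrow{r}{f_1} \arrow{d}{g'} & {[p]} \arrow{d}{h} \\
		{[q]} \arrow{r}{k_0}			& {[n']} \arrow{r}{k_1} & {[n]}
	\end{tikzcd}
\]
The plan is to construct the left-hand square as a pushout, obtain the right-hand square and the map $k_1$ from its universal property, and finish by pasting. First I would form the pushout in $\Delta$ of the span $[q] \xleftarrow{g} [m] \xrightarrow{f_0} [m']$, obtaining $[n']$ with legs $g' \colon [m'] \to [n']$ and $k_0 \colon [q] \to [n']$. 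Then $(h f_1, k)$ is a cocone under this span, since $h f_1 f_0 = h f = k g$ because the given square commutes, so there is a unique $k_1 \colon [n'] \to [n]$ in $\Delta$ with $k_1 g' = h f_1$ and $k_1 k_0 = k$; this makes the right-hand square commute. Its horizontal composite with the (pushout) left-hand square is the original pushout square, so by the pasting lemma for pushouts, dual to \Cref{pullback_lemma}, the right-hand square is a pushout as well.

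Everything thus reduces to showing that the span $[q] \xleftarrow{g} [m] \xrightarrow{f_0} [m']$ has a pushout in $\Delta$, which I would check against the three conditions of \Cref{thm:pushouts_delta}. The key point where efficiency enters is the identity
\[
	\im(f_0) \;=\; f_1^{-1}\bigl(\im f\bigr).
\]
The inclusion $\subseteq$ is automatic. For $\supseteq$, unfolding $\delta_f = \delta_{f_0} + \delta_{f_1}$ by the definition of the defect yields the numerical identity $|\im f_0| + |\im f_1| - |\im f| = m' + 1$. Setting $R \coloneqq [m'] \setminus \im(f_0)$ and noting that $\im f_1 = \im f \cup f_1(R)$, this gives $|f_1(R) \setminus \im f| = |\im f_1| - |\im f| = |R|$, which forces $f_1(R) \cap \im f = \emptyset$, i.e.\ $f_1^{-1}(\im f) \subseteq \im(f_0)$.

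Granting this identity, each condition of \Cref{thm:pushouts_delta} for the span $[q] \xleftarrow{g} [m] \xrightarrow{f_0} [m']$ follows by case analysis on the corresponding condition for $[q] \xleftarrow{g} [m] \xrightarrow{f} [p]$, which holds because the original square is a pushout: whenever a disjunct about $g$ holds it carries over unchanged, and otherwise one applies the identity. If $f(0) = 0$, then $f_1(0) = 0$ by monotonicity, and since $0 \in \im f$ we get $0 \in f_1^{-1}(\im f) = \im(f_0)$, whence $f_0(0) = 0$. Symmetrically, if $f(m) = p$, then $f_1(m') = p$ and $m' \in f_1^{-1}(\im f) = \im(f_0)$, whence $f_0(m) = m'$. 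Finally, if $f(i) \le f(i-1) + 1$ while $f_0(i) \ge f_0(i-1) + 2$, any element $a$ with $f_0(i-1) < a < f_0(i)$ lies in $R$ yet satisfies $f_1(a) \in \{f(i-1), f(i)\} \subseteq \im f$, contradicting $f_1^{-1}(\im f) = \im(f_0)$; so this case cannot occur, and the interior condition holds for $f_0$. I expect the identity $\im(f_0) = f_1^{-1}(\im f)$ --- and especially its use for the interior condition, which concerns exactly the elements of $[m']$ that $f_0$ skips over --- to be the only genuinely delicate point, the remainder being bookkeeping together with the standard pasting lemma.
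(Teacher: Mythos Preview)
Your proof is correct, but it takes a different route from the paper's. The paper argues via the $\vee$-decomposition: by \Cref{thm:pushouts_delta_star} the given pushout square is a $\vee$-product of the four minimal types, and by \Cref{lem:star_defect_compatible} the efficient factorization $f = f_1 f_0$ decomposes into efficient factorizations of each $\vee$-component of $f$ (additivity of defect forces componentwise efficiency). Three of the four minimal squares are trivial, so only the square with $[1] \xrightarrow{0p} [p]$ over $[1] \to [0]$ needs checking, and there an efficient factorization of $0p$ is a pair of endpoint-preserving coface maps, for which the intermediate pushout is visibly $[0]$.

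Your approach instead works directly with the elementwise criterion of \Cref{thm:pushouts_delta}, extracting from efficiency the identity $\im(f_0) = f_1^{-1}(\im f)$ and using it to transfer each of the three conditions from $f$ to $f_0$. This is a clean and self-contained argument that avoids the $\vee$-machinery entirely; the image identity is a nice characterization of efficient factorizations in its own right. The paper's version is shorter once \Cref{thm:pushouts_delta_star} and \Cref{lem:star_defect_compatible} are in hand, and fits the paper's overall strategy of reducing everything to the minimal $\vee$-components, but your argument would stand on its own with only \Cref{thm:pushouts_delta} as input.
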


\begin{center}
\begin{tikzcd}
	{[m]} \arrow{r}{f} \arrow{d}[swap]{g} & {[p]} \arrow{d}[swap]{h}		& & {[m]} \arrow{r}{f_0} \arrow{d}[swap]{g} & {[\ell]} \arrow{r}{f_1} \arrow{d}[swap]{g'} & {[p]} \arrow{d}[swap]{h} \\
	{[q]} \arrow{r}{k} & {[n]}						& & {[q]} \arrow{r}{k_0} & {[\ell']} \arrow{r}{k_1} & {[n]} 
\end{tikzcd}
\end{center}

\begin{proof}
	If the square above left is trivial, as in $g,h$ are identities and $f=k$, then any factorization of $f$ extends to a pair of trivial squares as above right with $g'$ also an identity.

	By \Cref{lem:star_defect_compatible} and \Cref{thm:pushouts_delta_star}, it suffices to check this for the four squares from \Cref{thm:pushouts_delta_star}. Three of those squares are trivial, so we need only consider the following square:
\[
	\begin{tikzcd}
			{[1]} \arrow{r}{0p} \arrow{d}[swap]{} & {[p]} \arrow{d}[swap]{} \\
			{[0]} \arrow[equals]{r}{} & {[0]}
	\end{tikzcd}
\]
An efficient factorization of $[1] \xrightarrow{0p} [p]$ consists of two endpoint preserving coface maps ${[1]} \atol{f_0} {[p']} \atol{f_1} {[p]}$. The vertical map $g' : {[p']} \to {[0]}$ makes the left square a pushout by \Cref{thm:pushouts_delta_star}, and the right square is then a pushout by the pushout lemma (dual to \Cref{pullback_lemma}).
\end{proof}

The defect also plays nicely with pushouts as follows.

\begin{lemma}
	\label{lem:defect_monotone}
	For a pushout square in $\Delta$ as below, assume that $f$ is a coface map or $g$ is a codegeneracy map. Then $\delta_k \leq \delta_f$.
\begin{center}
\begin{tikzcd}
	{[m]} \arrow{r}{f} \arrow{d}[swap]{g} & {[p]} \arrow{d}[swap]{h} \\
	{[q]} \arrow{r}{k} & {[n]} 
\end{tikzcd}
\end{center}
\end{lemma}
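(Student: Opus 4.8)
I would prove Lemma~\ref{lem:defect_monotone} by reducing it to the minimal pushout squares supplied by \Cref{thm:pushouts_delta_star} via the $\vee$-decomposition, using additivity of the defect (\Cref{lem:star_defect_compatible}). First I would observe that by \Cref{prop:preservation_reflection}, the given pushout square is the $\vee$-product of the pushout squares obtained by applying the canonical $\vee$-decomposition, and that the hypothesis ``$f$ is a coface map or $g$ is a codegeneracy map'' is inherited componentwise: a component of a coface map is a coface map, and a component of a codegeneracy is a codegeneracy. Since $\delta_f = \sum_j \delta_{f_j}$ and $\delta_k = \sum_j \delta_{k_j}$ by \Cref{lem:star_defect_compatible}, it suffices to prove $\delta_{k_j} \le \delta_{f_j}$ for each $\vee$-component, i.e.\ for each of the four squares (and mirror images) listed in \Cref{thm:pushouts_delta_star}.

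\emph{Checking the four basic squares.} Three of the four squares in \Cref{thm:pushouts_delta_star} are trivial, so there $k = f$ and the inequality is an equality. The only nontrivial case is
\[
	\begin{tikzcd}
		{[1]} \arrow{r}{0p} \arrow{d}[swap]{} & {[p]} \arrow{d}[swap]{} \\
		{[0]} \arrow[equals]{r}{} & {[0]}
	\end{tikzcd}
\]
and its mirror image. Here $k$ is the identity on $[0]$, so $\delta_k = 0 \le \delta_f$, which holds trivially. Thus the inequality $\delta_{k_j} \le \delta_{f_j}$ holds for every $\vee$-component, and summing over $j$ gives $\delta_k \le \delta_f$ for the original square.

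\emph{Where care is needed.} The one point that deserves attention is the claim that the hypothesis passes to the $\vee$-components: for a coface map $f = f_0 \vee \cdots \vee f_{r+1}$, each $f_j$ is a monotone injection, hence a coface map; similarly, if $g$ is a codegeneracy (a surjection), each $g_j$ is surjective, hence a codegeneracy. So for each component, the disjunction ``$f_j$ is a coface map or $g_j$ is a codegeneracy'' is satisfied — the same branch of the disjunction, in fact, as the one satisfied by the whole square. This is routine but is exactly what makes the componentwise reduction legitimate. With that in hand there is no real obstacle; the whole argument is a matter of bookkeeping the four cases. One could alternatively give a direct argument: the pushout induces a bijection $[n] \cong ([p] \sqcup [q])/{\sim}$, and one checks that $\im(k)$ omits at most as many elements of $[n]$ as $\im(f)$ omits of $[p]$ while $k$ identifies at most as many pairs as $f$ does — but the $\vee$-decomposition argument is cleaner and reuses machinery already developed.
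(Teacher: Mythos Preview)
Your approach is the same as the paper's: reduce to the four elementary squares of \Cref{thm:pushouts_delta_star} via the canonical $\vee$-decomposition and defect additivity, then check each one. The reduction is correct, and your observation that the hypothesis is inherited componentwise (a $\vee$-component of a coface is a coface, of a codegeneracy a codegeneracy) is exactly right.

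There is one gap in the case check. You write ``the only nontrivial case is [the displayed square] and its mirror image; here $k$ is the identity on $[0]$'', but that is only true for the square as displayed. The lemma's conclusion $\delta_k \le \delta_f$ is \emph{not} symmetric under mirroring, since mirroring swaps the pair $(f,k)$ with $(g,h)$. In the mirror of the nontrivial square the top map is $[1]\to[0]$ with defect $1$, while the bottom map is $[p]\to[0]$ with defect $p$, so the desired inequality fails for $p\ge 2$ unless you actually invoke the hypothesis. And this is precisely where the hypothesis does work: in that mirror component the top map $f_j:[1]\to[0]$ is never a coface, and the left map $g_j:[1]\to[p]$ is a codegeneracy only when $p\le 1$, so the inherited hypothesis forces $p\le 1$ and then $\delta_{k_j}=p\le 1=\delta_{f_j}$. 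You set up exactly this mechanism in your ``where care is needed'' paragraph but never deploy it on the mirror case; the paper's proof makes this step explicit.
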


\begin{proof}
	By \Cref{lem:star_defect_compatible} and \Cref{thm:pushouts_delta_star}, it again suffices to check this property on the four squares from \Cref{thm:pushouts_delta_star}.  For the three trivial squares parallel maps have the same defect so $\delta_k=\delta_f$, so it suffices to check the two reflections of the remaining square:
\[
	\begin{tikzcd}
			{[1]} \arrow{r}{0p} \arrow{d}[swap]{} & {[p]} \arrow{d}[swap]{} \\
			{[0]} \arrow[equals]{r}{} & {[0]}
	\end{tikzcd}\qquad\qquad\begin{tikzcd}
			{[1]} \arrow{r}{} \arrow{d}[swap]{0p} & {[0]} \arrow[equals]{d}[swap]{} \\
			{[p]} \arrow{r}{} & {[0]}
	\end{tikzcd}
\]
In the left square, whose left map $g$ is a codegeneracy, the top map $f$ has defect $|p-1|$, and the bottom map $k$ has defect $0$, so $\delta_k \leq \delta_f$.  In the right square, the top map $f$ is not a coface map and the left map $g$ is only a codegeneracy if $p=0$, which makes it also of the form of the left square, so we can ignore this case.
\end{proof}

These results can be combined to prove that any pushout in $\Delta$ can be factored into a grid of pushout squares with spans having both maps generating cofaces or codegeneracies. 
We call these squares {\em basic pushouts}.

\begin{thm}
	\label{prop:all_basic_pushouts}
	A square in $\Delta$ is a pushout if and only if it can be obtained from horizontal and vertical composition of basic pushouts and trivial pushouts of generating maps.
	
\end{thm}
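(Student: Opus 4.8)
The plan is to prove both directions by reducing to the building blocks already established. The backward direction is immediate: basic pushouts are pushouts by definition, trivial pushouts of generating maps are pushouts (trivial squares always are), and we noted that weak pullbacks — and dually pushouts — are closed under horizontal and vertical composition of squares; hence any square obtained from these by composition is a pushout. The work is in the forward direction.

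For the forward direction, I would start from \Cref{thm:pushouts_delta_star}: a pushout square is a $\vee$-product of the four minimal squares listed there (and their mirror images). Since a $\vee$-product of squares is a horizontal/vertical composite of "widened" copies of the individual factors (each factor $\vee$-ed with identities on the other components), it suffices to factor each of the four minimal squares into basic and trivial pushouts of generators. Three of these four squares are trivial — but their maps $p : [0]\to[p]$ and $0p : [1]\to[p]$ are not generating maps for $p>1$. So I would first take an efficient factorization of the non-identity map into generating cofaces (e.g. $[0]\xrightarrow{0}[1]\to\cdots\to[p]$, or $[1]\xrightarrow{0p}[p]$ into endpoint-preserving cofaces), and apply \Cref{prop:pushout_factor} repeatedly: each application splits off one generating coface horizontally, keeping all the intermediate squares pushouts, until the trivial square is expressed as a horizontal composite of trivial pushouts of generating maps. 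The fourth, non-trivial square $[1]\xrightarrow{0p}[p]\to[0]$ is handled exactly as in the proof of \Cref{prop:pushout_factor}: an efficient factorization of $0p$ into generating cofaces refines it into a horizontal composite of pushout squares, in which the leftmost square $[1]\xrightarrow{d}[2]\to[0]$ is a basic pushout (both top maps are generating cofaces, both vertical maps are generating codegeneracies when $p'=2$... here $[2]\to[0]$ is $\delta$-degree-$2$, so I'd further factor the vertical codegeneracy $[2]\to[0]$ into $[2]\xrightarrow{s}[1]\xrightarrow{s}[0]$ using the dual of \Cref{prop:pushout_factor} on the vertical direction) and the remaining squares have identity maps in one direction, i.e.\ are trivial pushouts of generators.

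More systematically, I would set up a single induction on the defect $\delta_f + \delta_g$ of the span generating the pushout. If this is $0$ the square is trivial with identity maps, a trivial pushout of identities (a degenerate case, possibly handled separately). Otherwise, after passing to the canonical $\vee$-decomposition via \Cref{prop:preservation_reflection} and \Cref{lem:star_defect_compatible} (defect is additive under $\vee$), at least one $\vee$-component has a non-identity span map; in that component pick a generating coface or codegeneracy factor of $f$ (or $g$), apply \Cref{prop:pushout_factor} (or its vertical/dual version) to peel off one basic-or-trivial square, and recurse on the strictly smaller residual pushout. \Cref{lem:defect_monotone} guarantees that the residual span does not have larger defect, and the peeled-off step strictly decreases it, so the induction terminates; the final assembled grid is a composite of basic pushouts and trivial pushouts of generators.

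The main obstacle I expect is bookkeeping rather than conceptual: ensuring the $\vee$-product of a grid-decomposition of each component genuinely assembles to a grid-decomposition of the whole square (one must check that $\vee$-ing with identities turns a horizontal composite into a horizontal composite and commutes with vertical composition, which follows from \Cref{lem:star_as_colim} and the fact that colimits commute), and verifying that every square produced is \emph{literally} a basic or trivial pushout \emph{of generating maps} — in particular that the vertical/horizontal "passenger" maps that get $\vee$-ed on are identities, so the widened squares remain trivial, and that the one genuinely two-dimensional square reduces to the single basic pushout $[1]\rightrightarrows[2]\rightrightarrows[0]$-type square after also factoring its codegeneracy leg. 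Handling the mirror images uniformly (by the mirror-image convention) avoids doubling the casework.
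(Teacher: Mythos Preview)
Your approach is genuinely different from the paper's, and there is a concrete gap.

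The paper's proof does \emph{not} pass through the $\vee$-decomposition of \Cref{thm:pushouts_delta_star} at all. It works directly: first reduce to the case where $g$ is a single generating map by factoring $g$ efficiently and applying \Cref{prop:pushout_factor} vertically; then with $\delta_g = 1$, factor $f$ efficiently into generators and apply \Cref{prop:pushout_factor} horizontally. The key point is \Cref{lem:defect_monotone}: once one span leg has defect $1$, every intermediate parallel map in the horizontal factorization has defect at most $1$, so every factor square is automatically basic or trivial. The two cases ($g$ a generating coface vs.\ a generating codegeneracy) differ only in which hypothesis of \Cref{lem:defect_monotone} is invoked, and the Reedy ordering of the factors of $f$ is needed in the codegeneracy case to ensure the hypothesis is always met. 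This is cleaner than routing through $\vee$: the defect bound does all the work.

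Your route via \Cref{thm:pushouts_delta_star} has a real error in the treatment of the non-trivial minimal square
\[
\begin{tikzcd}
{[1]} \ar{r}{0p} \ar{d} & {[p]} \ar{d} \\
{[0]} \ar[equals]{r} & {[0]}
\end{tikzcd}
\]
After horizontally peeling off $d^1 : [1] \to [2]$, the remaining square has span $[2] \to [p]$ (coface) and $[2] \to [0]$ (defect $2$), with bottom row $[0] = [0]$. You claim ``the remaining squares have identity maps in one direction, i.e.\ are trivial pushouts of generators'', but this is false: only the bottom map is an identity, not a parallel pair, so the square is neither trivial nor basic. Each such square $[k] \xrightarrow{d} [k+1]$ over $[0] = [0]$ must itself be factored vertically (using \Cref{prop:pushout_factor} on the left codegeneracy $[k]\to [0]$), yielding one basic pushout of type \Cref{typemixed} middle on top and a stack of trivial pushouts of $s^0$ below. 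This can be made to work, but it is exactly the place where \Cref{lem:defect_monotone} would have told you automatically that nothing bad happens, and your sketch misses it.

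The other gap you flag as ``bookkeeping'' --- turning the $\vee$-product of squares into a grid composite --- is also more than bookkeeping. Writing $S_0 \vee S_1$ as a horizontal composite gives you $S_0 \vee T_{g_1}$ and $T_{h_0} \vee S_1$ where the $T$'s are trivial squares on non-generating maps; pushing a grid decomposition of $S_0$ through the $\vee$ with such a $T$ then requires decomposing $T$ into a matching grid and checking that $\vee$ of a basic pushout with a trivial square (not just an identity square) is still basic or factors into basics. It does, but the argument is not shorter than the paper's direct one.
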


\begin{proof}
The ``if'' direction follows immediately from the fact that pushouts are closed under composition. For the ``only if'' direction, consider a pushout square in $\Delta$ as below
\[
\begin{tikzcd}
	{[m]} \arrow{r}{f} \arrow{d}[swap]{g} & {[p]} \arrow{d}[swap]{h} \\
	{[q]} \arrow{r}{k} & {[n]} 
\end{tikzcd}
\]

First, assume that $g$ is a generating coface map, and factor $f$ efficiently into generating coface and codegeneracy maps $f_1,...,f_{\delta_f}$.  This factorization extends to horizontally factor the pushout square by \Cref{prop:pushout_factor}, as pictured below. By repeated application of \Cref{lem:defect_monotone}, since $\delta_g = 1$ all of the vertical maps $g_i$ and $h$ have defect $1$ or $0$. Therefore, each of the factor squares is either a basic pushout or a trivial pushout of $f_i$. 
\[\begin{tikzcd}
{[m]} \arrow{r}{f_1} \arrow{d}[swap]{g} & {[m_1]} \arrow{r}{f_2} \arrow{d}[swap]{g_1} & \cdots \arrow{r}{f_{\delta_f-1}} & {[m_{\delta_f-1}]} \arrow{r}{f_{\delta_f}} \arrow{d}[swap]{g_{\delta_g-1}} & {[p]} \arrow{d}[swap]{h} \\
{[q]} \arrow{r}{k_1} & {[q_1]} \arrow{r}{k_2} & \cdots \arrow{r}{k_{\delta_f-1}} & {[q_{\delta_f-1}]} \arrow{r}{k_{\delta_f}} & {[n]}
\end{tikzcd}\]

Next, assume that $g$ is a generating codegeneracy map and factor $f$ efficiently into generating codegeneracies $f_1,...,f_\ell$ followed by generating cofaces $f_{\ell+1},...,f_{\delta_f}$.  By \Cref{prop:pushout_factor}, this factorization extends to horizontally factor the pushout square, as pictured above. By the previous case, each square above with $f_i$ a generating coface factors into basic pushouts as desired. By repeated application of \Cref{lem:defect_monotone}, as $g$ has defect 1, for $i \le \ell$ each vertical map $g_i$ has defect either $1$ or $0$, so the leftmost $\ell$ squares are each either a basic or trivial pushout of $f_i$. 

Finally, for an arbitrary pushout square in $\Delta$ as above, factoring $f$ or $g$ efficiently into generators extends to a factorization of the entire square into pushout squares with one map a generating coface or codegeneracy, again by \Cref{prop:pushout_factor}. The previous two cases then show that each of these squares factors into basic pushouts and trivial pushouts of generators, hence so does the entire square. 
\end{proof}

We now list the basic pushout squares, namely the commuting squares in $\Delta$ whose span consists of generating maps and which satisfy the conditions of \Cref{thm:pushouts_delta}. 
\begin{enumerate}[label=(\roman*)]
	\item Pushouts of two generating coface maps are of the form\footnote{Although the diagram still commutes when $i = j - 1$, it is then no longer a pushout, as the single nontrivial $\vee$-component of its span is one of the following: 
		\[
			\begin{tikzcd}[ampersand replacement=\&]
				      {[1]} \& {[0]} \ar[swap]{l}{d^0} \ar{r}{d^0} \& {[1]} 
				\& \& {[2]} \& {[1]} \ar[swap]{l}{d^1} \ar{r}{d^1} \& {[2]}
				\& \& {[1]} \& {[0]} \ar[swap]{l}{d^1} \ar{r}{d^1} \& {[1]}.
			\end{tikzcd}
		\]}
		\begin{equation}
			\label{type2face}
			\begin{tikzcd}
				{[n-2]} \ar{r}{d^i} \ar{d}[swap]{d^{j-1}}		& {[n-1]} \ar{d}[swap]{d^j}		\\
				{[n-1]} \ar{r}{d^i}				& {[n]}				\\
				{} \ar[phantom]{r}{(0 \le i < j - 1 \le n - 1)}		& {}
			\end{tikzcd}
		\end{equation}
	\item Pushouts of one generating coface and one generating codegeneracy map are of the form
		\begin{equation}
			\label{typemixed}
			\begin{tikzcd}
				{[n]} \arrow{r}{d^i} \arrow{d}[swap]{s^{j-1}}	& {[n+1]} \arrow{d}[swap]{s^j}	\\
				{[n-1]} \arrow{r}{d^i}				& {[n]}				\\
				{} \ar[phantom]{r}{(0 \le i < j \le n)}			& {}
			\end{tikzcd}\quad\begin{tikzcd}
				{[n+1]} \arrow{r}{d^{i+1}} \arrow{d}[swap]{s^i}	& {[n+2]} \arrow{d}{s^i s^i}[swap]{s^i s^{i+1}}	\\
				{[n]} \arrow[equals]{r}			& {[n]}			\\
				{} \ar[phantom]{r}{(0 \le i \le n)}			& {}
			\end{tikzcd}\quad\begin{tikzcd}
				{[n]} \arrow{r}{d^{i+1}} \arrow{d}[swap]{s^j}	& {[n+1]} \arrow{d}[swap]{s^j}	\\
				{[n-1]} \arrow{r}{d^i}				& {[n]}				\\
				{} \ar[phantom]{r}{(0 \le j < i \le n)}			& {}
			\end{tikzcd}
		\end{equation}
	\item Pushouts of two generating codegeneracy maps are of the form
		\begin{equation}
			\label{type2deg}
			\begin{tikzcd}
				{[n+2]} \ar{r}{s^i} \ar{d}[swap]{s^{j+1}}	& {[n+1]} \ar{d}[swap]{s^j}		\\
				{[n+1]} \ar{r}{s^i}			& {[n]}				\\
				{} \ar[phantom]{r}{(0 \le i \le j \le n)}	& {}
			\end{tikzcd}\qquad\qquad\begin{tikzcd}
				{[n+1]} \ar{r}{s^i} \ar{d}[swap]{s^i}		& {[n]} \ar[equals]{d}{}		\\
				{[n]} \ar[equals]{r}{}			& {[n]}				\\
				{} \ar[phantom]{r}{(0 \le i \le n)}		& {}
			\end{tikzcd}
		\end{equation}
\end{enumerate}


Immediately from the construction of the factorization in \Cref{prop:all_basic_pushouts}, we can further characterize the following special types of pushouts:

\begin{cor}\label{pure_pushouts}
Consider a pushout square as below in $\Delta$.
\[
\begin{tikzcd}
	{[m]} \arrow{r}{f} \arrow{d}[swap]{g} & {[p]} \arrow{d}[swap]{h} \\
	{[q]} \arrow{r}{k} & {[n]} 
\end{tikzcd}
\]
\begin{enumerate}
	\item If $f,g$ are both coface maps, then the square factors into basic squares of the form in \Cref{type2face}.
	\item If $f,g$ consist of one coface and one codegeneracy, then the square factors into basic squares of the form in \Cref{typemixed}.
	\item If $f,g$ are both codegeneracy maps, then the square factors into basic squares of the form in \Cref{type2deg}.
	\item If $\delta_f=\delta_k$ or $\delta_g=\delta_h$, then the square factors into the squares above that share this property, omitting the middle squares in \Cref{typemixed} and the right squares in \Cref{type2deg}. If $f,g$ further consist of one coface and one codegeneracy, the square factors into the left and right squares in \Cref{typemixed}.
\end{enumerate}
\end{cor}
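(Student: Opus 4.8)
The plan is to extract all four claims from the explicit construction in the proof of \Cref{prop:all_basic_pushouts}, which produces a factorization of an arbitrary pushout square into basic and trivial pushouts by efficiently factoring $f$ (or $g$) into generating cofaces and codegeneracies and applying \Cref{prop:pushout_factor} repeatedly. The key observation to exploit is that an efficient factorization can be chosen with \emph{all codegeneracies preceding all cofaces} (the Reedy factorization), and that \Cref{lem:defect_monotone} controls the defect of the vertical maps along the grid, so that the ``type'' of each factor square (which row of \Cref{type2face}, \Cref{typemixed}, \Cref{type2deg} it belongs to) is determined by whether it is a square of two cofaces, a coface and a codegeneracy, or two codegeneracies.

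First I would handle (a): if $f$ and $g$ are both coface maps, factor $f$ efficiently into generating cofaces; by \Cref{prop:pushout_factor} the square factors horizontally into pushouts whose top maps are generating cofaces, and since pushouts of coface maps are coface maps (noted after \Cref{thm:pushouts_delta_star}), and by \Cref{lem:defect_monotone} the vertical maps have defect $\le \delta_g$, one can iterate: factor each vertical map efficiently into generating cofaces and apply \Cref{prop:pushout_factor} vertically. Each resulting basic square has all four maps generating cofaces, hence is (a reflection of) \Cref{type2face}. Claim (c) is entirely analogous with ``coface'' replaced by ``codegeneracy'': pushouts of codegeneracies are codegeneracies by \Cref{thm:pushouts_delta_star}, and the basic squares of two codegeneracies are exactly those in \Cref{type2deg}. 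For (b), factor $f$ efficiently with its (single) codegeneracy first, then its (single) coface; \Cref{prop:pushout_factor} splits the square into a ``pure codegeneracy'' column and a ``pure coface'' column, and applying (a) and (c) to these columns — after also factoring $g$ appropriately — leaves basic squares each containing exactly one coface and one codegeneracy among the span, i.e.\ of the form \Cref{typemixed}.

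For (d): the hypothesis $\delta_f = \delta_k$ means the map $f$ and its pushout $k$ have equal defect; I would argue that along the horizontal factorization grid produced above, \Cref{lem:defect_monotone} gives $\delta_{k_i} \le \delta_{f_i}$ at each stage, and $\delta_f = \delta_k$ forces equality $\delta_{k_i} = \delta_{f_i}$ throughout (the defects telescope). Equal defect on the two horizontal sides of a basic square rules out precisely the squares where they differ — namely the middle squares of \Cref{typemixed} (where the bottom is an identity but the top has defect $|p{-}1|$, nonzero unless trivial) and the right squares of \Cref{type2deg} (bottom an identity, top a nontrivial codegeneracy). The symmetric statement holds for $\delta_g = \delta_h$ by working with the vertical factorization instead. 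The last sentence of (d) then combines this with the coface/codegeneracy bookkeeping of (b).

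The main obstacle I anticipate is purely bookkeeping rather than conceptual: one must check that the efficient (Reedy) factorizations of $f$ and of $g$ can be chosen \emph{compatibly} so that the full two-dimensional grid of \Cref{prop:all_basic_pushouts} has every cell a basic square of the claimed homogeneous type, and that the defect-telescoping argument in (d) really does propagate through both directions of the grid without the ``mixed'' squares sneaking in. This is where I would be most careful, though \Cref{lem:star_defect_compatible}, \Cref{lem:defect_monotone}, and the fact that the construction in \Cref{prop:all_basic_pushouts} already orders codegeneracies before cofaces do essentially all the work.
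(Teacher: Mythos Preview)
Your approach is correct and matches the paper's, which states only that the corollary follows ``immediately from the construction of the factorization in \Cref{prop:all_basic_pushouts}''; you are correctly unpacking that construction and your telescoping argument for (d) via \Cref{lem:defect_monotone} is exactly what is needed. One small slip: your description of (b) is garbled --- in case (b) each of $f,g$ is already a \emph{pure} coface or codegeneracy, so there is no nontrivial Reedy splitting of $f$ into ``its codegeneracy first, then its coface''; the correct unpacking is simply to factor the codegeneracy among $\{f,g\}$ into generating codegeneracies vertically, observe that pushouts of cofaces along codegeneracies are again cofaces, and then in each row factor the (coface) top horizontally, landing directly in \Cref{typemixed}.
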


A pushout square with the property in part $(d)$ is in particular a \emph{concrete pushout}, meaning a pushout preserved by the forgetful functor $\Delta \to \Set$. The right pushout squares in \Cref{type2deg} are also concrete, but the middle squares in \Cref{typemixed} are not.

\begin{remark}\label{2segal_stiff_split_squares}
Related classes of squares in $\Delta$ have been considered in \cite{DK2segal}, \cite{GKT1}, and \cite{GKT2}. In particular, the squares of \cite[Figure (8), Lemma 3.10]{GKT1} sent to pullbacks by a decomposition space include all basic pushouts except for the pushout squares of cofaces in \cite[Lemma 2.10]{GKT1} between outer face maps and the middle squares in \Cref{typemixed}. As decomposition spaces agree with the 2-Segal spaces of \cite{DK2segal} by \cite{2segalunital}, the squares which factor into these restricted basic pushout squares ought to be those pushouts which are preserved by the standard functor from $\Delta$ to Connes' cycle category $\Lambda$, according to \cite[Theorem 2]{Walde}.
\end{remark}

\subsection*{Pushouts via $\vee$-products and composition}\label{subsec:strong_decomposition}

Finally, we can further decompose the basic pushout squares using both $\vee$ and composition.

\begin{cor}
	\label{lem:basic_decompose}
	Pushout squares in $\Delta$ are generated under $\vee$ and composition by the following pushout squares and their mirror images:
	\[
	\begin{tikzcd}
		{[0]} \arrow[equals]{r}{} \arrow[equals]{d}[swap]{} & {[0]} \arrow[equals]{d}[swap]{} \\
		{[0]} \arrow[equals]{r}{} & {[0]} 
	\end{tikzcd}\qquad\begin{tikzcd}
		{[1]} \arrow[equals]{r}{} \arrow[equals]{d}[swap]{} & {[1]} \arrow[equals]{d}[swap]{} \\
		{[1]} \arrow[equals]{r}{} & {[1]} 
	\end{tikzcd}\qquad\begin{tikzcd}
		{[0]} \arrow{r}{d^0} \arrow[equals]{d}[swap]{} & {[1]} \arrow[equals]{d}[swap]{} \\
		{[0]} \arrow{r}{d^0} & {[1]} 
	\end{tikzcd}\qquad\begin{tikzcd}
		{[0]} \arrow{r}{d^1} \arrow[equals]{d}[swap]{} & {[1]} \arrow[equals]{d}[swap]{} \\
		{[0]} \arrow{r}{d^1} & {[1]} 
	\end{tikzcd}
	\]\[
	\begin{tikzcd}
		{[1]} \arrow{r}{s^0} \arrow[equals]{d}[swap]{} & {[0]} \arrow[equals]{d}[swap]{} \\
		{[1]} \arrow{r}{s^0} & {[0]} 
	\end{tikzcd}\qquad\begin{tikzcd}
		{[1]} \arrow{r}{d^1} \arrow[equals]{d}[swap]{} & {[2]} \arrow[equals]{d}[swap]{} \\
		{[1]} \arrow{r}{d^1} & {[2]} 
	\end{tikzcd}\qquad\begin{tikzcd}
		{[1]} \arrow{r}{s^0} \arrow{d}[swap]{s^0} & {[0]} \arrow[equals]{d}[swap]{} \\
		{[0]} \arrow[equals]{r}{} & {[0]} 
	\end{tikzcd}\qquad\begin{tikzcd}
		{[1]} \arrow{r}{d^1} \arrow{d}[swap]{s^0} & {[2]} \arrow{d}[swap]{} \\
		{[0]} \arrow[equals]{r}{} & {[0]} 
	\end{tikzcd}
	\]
\end{cor}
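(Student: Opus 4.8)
The plan is to combine \Cref{prop:all_basic_pushouts} with a component-by-component analysis of canonical $\vee$-decompositions. One direction is easy: each of the eight squares is a pushout (the first six are trivial squares, and the last two are instances of the third form in \Cref{thm:pushouts_delta_star}, with $p = 0$ and $p = 2$), and pushouts are closed under $\vee$ (\Cref{prop:preservation_reflection}) and under composition; hence everything built from the eight squares under $\vee$ and composition is a pushout. For the converse, \Cref{prop:all_basic_pushouts} reduces us to showing that every basic pushout and every trivial pushout of a generating map lies in the $\vee$- and composition-closure of the eight listed squares and their mirrors.

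Trivial pushouts of generators come first. A $\vee$-product of identity maps is an identity, so a $\vee$-product of trivial squares is trivial. The canonical $\vee$-decomposition of a generating coface $d^k$ consists of identities on ${[0]}$ and ${[1]}$ together with a single nontrivial component, which is $d^1 : {[1]} \to {[2]}$ when $d^k$ is inner and $d^0$ or $d^1 : {[0]} \to {[1]}$ when it is outer; likewise a generating codegeneracy $\vee$-decomposes with a single nontrivial component $s^0 : {[1]} \to {[0]}$. Hence the trivial pushout of any generating map is a $\vee$-product of trivial pushouts of $\id_{[0]}$, $\id_{[1]}$, $d^0, d^1 : {[0]} \to {[1]}$, $d^1 : {[1]} \to {[2]}$, and $s^0 : {[1]} \to {[0]}$ --- precisely the first six squares.

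For a basic pushout --- one of the squares \Cref{type2face}, \Cref{typemixed}, \Cref{type2deg} and their mirrors --- I would take the canonical $\vee$-decomposition, splitting the top-left object into its vertices and edges and pushing the whole square forward (\Cref{cor:diagram_star_decomp}). By \Cref{prop:preservation_reflection} each component is again a pushout square, now with ${[0]}$ or ${[1]}$ at the top left, hence one of the four forms of \Cref{thm:pushouts_delta_star} up to mirror image; and since both span maps of a basic pushout are generating, hence of defect $1$, iterating \Cref{lem:star_defect_compatible} forces every object appearing in every component to be ${[0]}$, ${[1]}$ or ${[2]}$. A short check of the four forms under this size bound shows that each component is either a trivial pushout of $\id_{[0]}$, $\id_{[1]}$, $d^0$ or $d^1 : {[0]} \to {[1]}$, $d^1 : {[1]} \to {[2]}$, or $s^0 : {[1]} \to {[0]}$ (possibly mirrored into vertical orientation), or else one of the last two listed squares; the latter arise precisely when the two generating span maps act on a common $\vee$-edge $\{i,i+1\}$ of the source --- two codegeneracies collapsing it give the seventh square, a coface inflating it to ${[2]}$ alongside a codegeneracy collapsing it gives the eighth (non-concrete) square, while two cofaces would each restrict to $d^1 : {[1]} \to {[2]}$, whose span has no pushout, so that case does not occur. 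Carrying this out, in \Cref{type2face}, the first and third squares of \Cref{typemixed}, and the left square of \Cref{type2deg} the index inequalities force the two span maps onto distinct $\vee$-edges, so only the first six squares (and mirrors) appear; the common-edge situation occurs exactly in the middle square of \Cref{typemixed}, producing the eighth square, and in the right square of \Cref{type2deg}, producing the seventh.

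I expect the bookkeeping of the last step to be the main obstacle: for each of the six basic pushout types one must determine where each span map inflates or collapses a $\vee$-component --- an interior edge versus a terminal vertex, according to whether the relevant coface/codegeneracy index is inner or outer --- and then identify the resulting component square, including its forced right-hand vertical map, against the list. The most delicate point is the common-edge component of the middle square of \Cref{typemixed}: it is a pushout in $\Delta$ but not in $\Set$, which is exactly why the eighth square must be retained as a separate generator.
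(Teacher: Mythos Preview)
Your proposal is correct and follows essentially the same route as the paper: reduce via \Cref{prop:all_basic_pushouts} to squares whose span maps have defect at most $1$, take canonical $\vee$-decompositions, and use additivity of defect (\Cref{lem:star_defect_compatible}) to constrain the components, which by \Cref{prop:preservation_reflection} and \Cref{thm:pushouts_delta_star} must be among a short list. The only difference is in packaging: you treat trivial pushouts of generators and basic pushouts separately and then walk through the six basic pushout families case by case, whereas the paper observes uniformly that all of these squares have span maps of defect $\le 1$, so it suffices to enumerate once and for all which of the four \Cref{thm:pushouts_delta_star} forms (and their mirrors) have span maps of defect $\le 1$---this yields the eight listed squares directly, with no need to revisit each family in \Cref{type2face}, \Cref{typemixed}, \Cref{type2deg}. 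Your final-paragraph bookkeeping is therefore avoidable.
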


\begin{proof}
	By \Cref{prop:all_basic_pushouts}, each pushout square factors into basic pushouts and trivial pushouts of generators, or equivalently pushouts of spans whose maps have defect 0 or 1. By \Cref{lem:star_defect_compatible}, if a $\vee$-product of pushout squares has this property then so do its $\vee$-components, which must then be among the squares of \Cref{thm:pushouts_delta_star} with this property. These are precisely the squares above.
\end{proof}

It is straightforward to check that the analogous generators for concrete pushouts include all but the bottom right square above.

\subsection*{Balanced coface squares}

Recall that every coface map in $\Delta$ is a composite of generating coface maps $d^i : {[n-1]} \to {[n]}$ for $0 \le i \le n$. The generating relations between these generators are given by simplicial identities of the following form:
\begin{equation}
	\label{basic_coface_squares}
	\begin{tikzcd}
		{[n-2]} \arrow{r}{d^i} \arrow{d}[swap]{d^{j-1}} & {[n-1]} \arrow{d}[swap]{d^j} \\
		{[n-1]}  \arrow{r}{d^i} & {[n]}	\\
		{} \ar[phantom]{r}{(0 \le i < j \le n)}		& {}
	\end{tikzcd}
\end{equation}
We call these squares (and their reflections) \emph{basic coface squares}. These are slightly more general than the squares in \Cref{type2face}, as the case $i=j-1$ is now included. While that square is not a pushout in $\Delta$, it becomes a pushout after applying the forgetful functor $\Delta \to \Set$. We show below that any square with this property factors into squares of the above form.

\begin{defn}
A commuting square of coface maps in $\Delta$ is \emph{balanced} if it is a pushout of finite sets.
\end{defn}

This terminology is motivated by the following characterization.

\begin{lemma}\label{lem:balanced_bicartesian}
A square of coface maps in $\Delta$ as below is balanced if and only if it is jointly surjective and $p + q = m + n$.
\begin{center}\begin{tikzcd}
	{[m]} \arrow{r}{f} \arrow{d}[swap]{g} & {[p]} \arrow{d}[swap]{h} \\
	{[q]} \arrow{r}{k} & {[n]} 
\end{tikzcd}\end{center}
\end{lemma}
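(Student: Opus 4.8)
The plan is to prove the biconditional by analyzing the combinatorics of coface maps, which are precisely the injective monotone maps, viewed as subset inclusions. First I would establish the easy direction: if the square is balanced, then it is in particular a pushout of finite sets, hence jointly surjective (pushouts along injections in $\Set$ are jointly surjective onto the quotient, which here is all of $[n]$), and the cardinality count $|[n]| = |[p]| + |[q]| - |[m]|$ gives $n+1 = (p+1)+(q+1)-(m+1)$, i.e. $p+q = m+n$. So the content is the converse.

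For the converse, suppose the square of coface maps is jointly surjective with $p+q = m+n$. Identifying each coface map with the inclusion of a subset, write $P = \im(h) \subseteq [n]$ and $Q = \im(k) \subseteq [n]$, so $|P| = p+1$, $|Q| = q+1$, and $P \cup Q = [n]$ by joint surjectivity. The commuting square realizes $[m]$ as mapping into $P \cap Q$: indeed $h f = k g$ factors through $P \cap Q$, and since $f, g$ are injective, $|[m]| = m+1 \le |P \cap Q|$. But inclusion–exclusion gives $|P \cap Q| = |P| + |Q| - |P \cup Q| = (p+1)+(q+1)-(n+1) = (p+q-n)+1 = m+1$ using the hypothesis $p+q=m+n$. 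Hence $[m] \cong P \cap Q$ under $hf=kg$, so the square is precisely the inclusion square of $P \cap Q \hookrightarrow P$, $P\cap Q \hookrightarrow Q$, $P \hookrightarrow P\cup Q = [n]$, $Q \hookrightarrow [n]$. Such an intersection/union square of subsets is always a pushout in $\Set$, which is exactly the assertion that the original square is balanced.

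The main obstacle — really the only place requiring care — is making the identification $[m] \cong P \cap Q$ rigorous at the level of morphisms in $\Delta$ rather than just sets: one must check that $f$ (resp. $g$) is the composite of this isomorphism with the inclusion $P \cap Q \hookrightarrow P$ (resp. $\hookrightarrow Q$), which follows because $f = h^{-1} \circ (hf)$ on the image and $hf$ lands in $P \cap Q$, and similarly for $g$; monotonicity is automatic since all maps involved are monotone injections between totally ordered finite sets. With that pinned down, the fact that a square of subset inclusions built from $R \hookrightarrow S$, $R \hookrightarrow T$, $S \hookrightarrow S\cup T$, $T \hookrightarrow S\cup T$ with $R = S \cap T$ is a pushout in $\Set$ is standard, completing the argument.

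I would remark afterwards that this lemma dovetails with \Cref{face_pushout_concrete}: a balanced coface square is a pushout in $\Delta$ precisely when it additionally satisfies the spine condition, and the cardinality equality $p+q=m+n$ is exactly what rules out the degenerate $i=j-1$ case being a genuine pushout while still allowing it to be a set-level pushout.
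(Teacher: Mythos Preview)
Your argument is correct. Both directions match the paper's reasoning in spirit, but the converse takes a slightly different route: the paper simply notes that joint surjectivity makes the induced map $[p] \cup_{[m]} [q] \to [n]$ surjective, and then $p+q=m+n$ forces it to be a bijection between equipotent finite sets. You instead work on the other side of the square, using inclusion--exclusion on $P=\im(h)$ and $Q=\im(k)$ to show $|P\cap Q|=m+1$, hence $hf=kg$ identifies $[m]$ with $P\cap Q$, after which the square is literally an intersection/union square of subsets and therefore a pushout in $\Set$. The two arguments are dual uses of the same cardinality identity; the paper's version is a line shorter, while yours makes the structure of the square more explicit (and in particular shows directly that a balanced square is also a pullback of sets, which the paper only remarks on afterward). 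Your closing remark about the $i=j-1$ case is a bit garbled---the cardinality equality does not itself ``rule out'' that case being a $\Delta$-pushout; rather, that square \emph{is} balanced but fails the spine condition of \Cref{face_pushout_concrete}---so I would tighten or drop that sentence.
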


\begin{proof}
	If the square is a pushout of finite sets with injections $f$ and $g$, then ${[n]} \cong {[p]} \cup_{[m]} {[q]}$, so $n = p + q - m$ and the square is jointly surjective.  If the square is jointly surjective and $p + q = m + n$, then by joint surjectivity the induced map ${[p]} \cup_{[m]} {[q]} \to {[n]}$ is a surjection, but as $n = p + q - m$, this is a surjection between finite sets of the same cardinality, hence an isomorphism.
\end{proof}

Note that a pushout of monomorphisms of sets is also a pullback. 
In terms of defects, the equation $p + q = m + n$ is equivalent to $\delta_f = \delta_k$ and also to $\delta_g = \delta_h$. 

\begin{thm}\label{prop:balanced_factorization}
	A nontrivial commuting square of coface maps in $\Delta$ is balanced if and only if it can be factored into a grid of basic coface squares.
\end{thm}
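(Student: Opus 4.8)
I would prove the two implications separately. The reverse direction is routine: a basic coface square, being an instance of a simplicial identity, is a pushout of finite sets; equivalently, its two legs with target $[n]$ are distinct generating coface maps $d^i,d^j$, hence jointly surjective, and the cardinality equation of \Cref{lem:balanced_bicartesian} holds ($p+q=2(n-1)=m+n$), so the square is balanced. Pushout squares of sets are closed under horizontal and vertical composition, and a composite of coface maps is a coface map, so any grid of basic coface squares composes to a balanced square — which is moreover nontrivial, since each of its four outer edges is a nonempty composite of generating cofaces.

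For the forward direction, let the given nontrivial balanced square have legs $f,g$ (top, left) and $h,k$ (right, bottom). Since a pushout of monomorphisms of sets is also a pullback, writing $A:=\im h$ and $B:=\im k$ we have $A\cup B=[n]$ by joint surjectivity, $A\cong[p]$, $B\cong[q]$, and $A\cap B=\im(hf)=\im(kg)\cong[m]$. Hence $T:=[n]\setminus A$ and $S:=[n]\setminus B$ are disjoint, of sizes $|T|=\delta_f=:a$ and $|S|=\delta_g=:b$, with $a,b\ge 1$ (if $\delta_f=0$ then $f$ is an identity and the square is trivial, and likewise for $g$). Enumerating $T=\{t_1<\cdots<t_a\}$ and $S=\{s_1<\cdots<s_b\}$, set
\[
	U_{x,y}:=(A\cap B)\cup\{t_1,\dots,t_x\}\cup\{s_1,\dots,s_y\}\subseteq[n]
\]
for $0\le x\le a$, $0\le y\le b$, a totally ordered set of size $m+1+x+y$. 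The inclusions $U_{x,y}\hookrightarrow U_{x+1,y}$ and $U_{x,y}\hookrightarrow U_{x,y+1}$ are generating coface maps, the resulting $a\times b$ array of squares commutes, and the square at $(x,y)$ adds $t_{x+1}$ along both its horizontal legs and $s_{y+1}$ along both its vertical legs; it is therefore jointly surjective, satisfies the cardinality equation, and so is a basic coface square by \Cref{lem:balanced_bicartesian} (a jointly surjective commuting square of generating cofaces is always an instance of a simplicial identity). Transporting along the unique order-isomorphisms $[m+x+y]\cong U_{x,y}$ turns this into a genuine grid of basic coface squares in $\Delta$, whose outer square has corners $[m],[p],[q],[n]$ (using $a=p-m$, $b=q-m$, $a+b=n-m$). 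It then remains to identify the outer legs of this grid with $f,g,h,k$: for example $h$ and the transported top-right/bottom-right leg are both the unique monotone bijection $[p]\to A$ followed by the inclusion $A\hookrightarrow[n]$, and the other three are matched the same way using injectivity of the legs together with $hf=kg$.

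The only delicate point is this last step — checking that the outer square of the constructed grid is literally the given square, not merely isomorphic to it. It comes down to the observation that between two totally ordered sets there is at most one monotone bijection, which pins down the order-isomorphisms $[p]\cong A$, $[q]\cong B$, $[m]\cong A\cap B$ as the corestrictions of $h$, $k$, $hf$. Everything else — the cardinality bookkeeping, joint surjectivity of the small squares, and closure of balanced (hence pushout-of-sets) squares under composition — is routine. (One could instead argue by induction on $\delta_f+\delta_g$ using factorizations as in \Cref{prop:pushout_factor}, but the explicit grid seems cleaner.)
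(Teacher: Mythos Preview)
Your argument is correct and takes a genuinely different route from the paper's. The paper proceeds by induction on $\delta_f+\delta_g$: at each step it chooses some $i\in[p]\setminus\im(f)$, factors $f$ through $d^i$, uses the pullback property of the square in $\Set$ to see that $h(i)\notin\im(k)$ and hence factor $k$ through $d^{h(i)}$, and then checks that the two resulting squares are again balanced. You instead construct the entire grid in one stroke by enumerating the subsets $U_{x,y}\subseteq[n]$; this is more explicit and avoids the inductive bookkeeping, at the price of having to identify the outer edges of the grid by hand (which you handle cleanly via uniqueness of monotone bijections). One small slip worth fixing: with $A=\im h$ and $B=\im k$ one has $|T|=|[n]\setminus A|=n-p=q-m=\delta_g$, not $\delta_f$, and likewise $|S|=\delta_f$. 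So with your definitions the grid has $U_{a,0}=B\cong[q]$ and $U_{0,b}=A\cong[p]$, and its outer square is the \emph{mirror} of the given one (right leg equal to $k$, not $h$). Since squares are identified with their mirrors this is harmless, but your stated values $a=p-m$, $b=q-m$ and the identification of the right leg with $h$ are off by exactly this swap; interchanging the definitions of $T$ and $S$ (or of $A$ and $B$) fixes everything.
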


\begin{proof}
	The ``only if'' direction is immediate as pushouts are closed under composition.

	For the ``if'' direction, we consider a balanced square as below and prove the existence of a factorization by induction on the total defect $\delta_f + \delta_g = \delta_h + \delta_k$.
\[
\begin{tikzcd}
	{[m]} \arrow{r}{f} \arrow{d}[swap]{g} & {[p]} \arrow{d}[swap]{h} \\
	{[q]} \arrow{r}{k} & {[n]} 
\end{tikzcd}
\]
The basic coface squares are precisely the balanced squares between maps with defect $1$. Therefore using induction and reflection symmetry, it suffices to show that if $\delta_f = \delta_k > 1$, then the square can be factored horizontally into two nontrivial balanced squares.

As $f$ is a nontrivial coface, there exists a choice of $i \in {[p]} \setminus \im(f)$. For such an element $i$, we can factor $f$ uniquely as $f' : {[m]} \to {[p-1]}$ followed by $d^i : {[p-1]} \to {[p]}$, as in the diagram below. The same is true for $k$ with respect to $h(i) \in {[n]}$, which is not in $\im(k)$ as the original square is a pullback and $h(i)$ is by assumption not in $\im(hf)$. We then have the following factorization into squares, where $h'$ is defined as the restriction of $h$ along $d^i$, ensuring that both squares commute.
\begin{center}\begin{tikzcd}
		{[m]} \arrow{r}{f'} \arrow{d}[swap]{g} & {[p-1]} \arrow{d}[swap]{h'} \ar{r}{d^i}	& {[p]} \arrow{d}[swap]{h} \\
		{[q]} \arrow{r}{k'} & {[n-1]} \arrow{r}{d^{h(i)}}	& {[n]}
\end{tikzcd}\end{center}

The objects in both squares clearly satisfy the size equation of \Cref{prop:balanced_factorization}, so to show they are balanced it remains only to show that they are jointly surjective. In the right square, the only element of $n$ not in the image of $d^{h(i)}$ is $h(i)$, which is definitionally in the image of $h$. In the left square, restricting $h$ along $d^i$ excludes only $h(i)$ from the joint image, but $h(i)$ is also excluded from ${[n-1]}$ (with respect to the factorization through $d^{h(i)}$), so joint surjectivity follows from that of the outer rectangle.
\end{proof}

\begin{cor}\label{balanced_decompose}
	Balanced squares in $\Delta$ are generated under $\vee$ and composition by the following squares and their mirror images:
	\[
	\begin{tikzcd}
		{[0]} \arrow[equals]{r}{} \arrow[equals]{d}[swap]{} & {[0]} \arrow[equals]{d}[swap]{} \\
		{[0]} \arrow[equals]{r}{} & {[0]} 
	\end{tikzcd}\qquad\begin{tikzcd}
		{[1]} \arrow[equals]{r}{} \arrow[equals]{d}[swap]{} & {[1]} \arrow[equals]{d}[swap]{} \\
		{[1]} \arrow[equals]{r}{} & {[1]} 
	\end{tikzcd}\qquad\begin{tikzcd}
		{[0]} \arrow{r}{d^0} \arrow[equals]{d}[swap]{} & {[1]} \arrow[equals]{d}[swap]{} \\
		{[0]} \arrow{r}{d^0} & {[1]} 
	\end{tikzcd}\qquad\begin{tikzcd}
		{[0]} \arrow{r}{d^1} \arrow[equals]{d}[swap]{} & {[1]} \arrow[equals]{d}[swap]{} \\
		{[0]} \arrow{r}{d^1} & {[1]} 
	\end{tikzcd}
	\]\[
	\begin{tikzcd}
		{[1]} \arrow{r}{d^1} \arrow[equals]{d}[swap]{} & {[2]} \arrow[equals]{d}[swap]{} \\
		{[1]} \arrow{r}{d^1} & {[2]} 
	\end{tikzcd}\qquad\begin{tikzcd}
		{[0]} \arrow{r}{d^0} \arrow{d}[swap]{d^0} & {[1]} \arrow{d}[swap]{d^0} \\
		{[1]} \arrow{r}{d^1} & {[2]} 
	\end{tikzcd}\qquad\begin{tikzcd}
		{[0]} \arrow{r}{d^1} \arrow{d}[swap]{d^1} & {[1]} \arrow{d}[swap]{d^1} \\
		{[1]} \arrow{r}{d^2} & {[2]} 
	\end{tikzcd}\qquad\begin{tikzcd}
		{[1]} \arrow{r}{d^1} \arrow{d}[swap]{d^1} & {[2]} \arrow{d}[swap]{d^1} \\
		{[2]} \arrow{r}{d^2} & {[3]} 
	\end{tikzcd}
	\]
\end{cor}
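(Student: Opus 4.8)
The plan is to combine the decomposition results already established for balanced squares with the additivity of the defect under $\vee$, exactly mirroring the proof of \Cref{lem:basic_decompose}. First I would invoke \Cref{prop:balanced_factorization}: every nontrivial balanced square factors into a grid of basic coface squares, i.e.\ squares of the form \Cref{basic_coface_squares}, while trivial balanced squares are (after $\vee$-decomposition) built from trivial squares on $[0]$ and $[1]$. So it suffices to show that each basic coface square, together with the trivial squares on $[0]$ and $[1]$, is generated under $\vee$ and composition by the eight squares listed.

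The key step is to $\vee$-decompose a basic coface square $d^i \circ d^{j-1} = d^j \circ d^i$ (with $0 \le i < j \le n$) along the canonical $\vee$-decomposition of its initial object $[n-2]$. By \Cref{cor:diagram_star_decomp} and \Cref{lem:star_defect_compatible}, since both $f$ and $g$ have defect $1$, all but one $\vee$-component of the square is trivial on $[0]$ or $[1]$, and the single nontrivial component is a balanced square whose span consists of a defect-$1$ coface out of $[0]$ or $[1]$ — hence is one of the three ``minimal'' balanced squares $[0] \to [1]$ (via $d^0$ or $d^1$), $[1] \to [2]$ (via $d^1$), or the two genuinely two-sided basic coface squares with source $[0]$ or $[1]$ and target $[1]$ or $[2]$. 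This is the same bookkeeping as in the footnote to \Cref{type2face} and in the proof of \Cref{lem:basic_decompose}; one simply reads off which of \Cref{thm:pushouts_delta_star}'s squares (now refined to coface maps, and allowing the $i=j-1$ non-pushout case that is still a pushout of sets) can occur as a nontrivial $\vee$-component of defect $1$ on each side. Those are precisely the eight squares (plus mirror images) in the statement: the two trivial squares on $[0]$ and $[1]$, the three ``constant-in-one-direction'' squares $[0] \xrightarrow{d^0} [1]$, $[0] \xrightarrow{d^1} [1]$, $[1] \xrightarrow{d^1} [2]$, and the three squares $\bigl([0] \rightrightarrows [1] \rightrightarrows [2]\bigr)$, $\bigl([0] \rightrightarrows [1] \rightrightarrows [2]\bigr)$, $\bigl([1] \rightrightarrows [2] \rightrightarrows [3]\bigr)$ recording the two inequivalent ways two generating cofaces can genuinely interact.

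Conversely, the ``if'' direction is immediate since balanced squares — being pushouts of (mono)morphisms of sets — are closed under horizontal and vertical composition, and $\vee$-products of balanced squares are balanced by \Cref{lem:balanced_bicartesian} (joint surjectivity and the additive size equation $p+q = m+n$ are both preserved by $\vee$, using \Cref{lem:star_defect_compatible} for the latter). One checks by inspection that each of the eight listed squares is balanced.

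I expect the main obstacle to be the combinatorial verification that the list is exhausted and not redundant: one must carefully enumerate, among the coface refinements of the squares in \Cref{thm:pushouts_delta_star} (including the $i = j-1$ case which is balanced but not a $\Delta$-pushout), exactly which can appear as the nontrivial $\vee$-component with defect $1$ on each leg, and confirm that the three ``two-sided'' squares listed — distinguished by whether the doubled vertex is an endpoint or interior, i.e.\ whether the coface maps are outer or inner — are genuinely all the possibilities and that each actually arises. This is routine but needs care with the endpoint-preservation constraints on $\vee$-components (the $f_0,\dots,f_r$ preserve maxima, the $f_1,\dots,f_{r+1}$ preserve minima), which is what forces the interior components to be two-sided cofaces while the two outer components can only be the one-sided squares $d^0$ on the left end and $d^1$ on the right end.
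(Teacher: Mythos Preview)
Your approach is correct and essentially the same as the paper's: invoke \Cref{prop:balanced_factorization} to reduce to basic coface squares and trivial squares on generating cofaces, then take canonical $\vee$-components and enumerate the possibilities using the additivity of defect under $\vee$ (\Cref{lem:star_defect_compatible}). One minor imprecision: your claim that ``all but one $\vee$-component of the square is trivial'' is not quite right when $i < j-1$, since then the defects of $d^i$ and $d^{j-1}$ land in \emph{different} $\vee$-components, giving two components each with one identity leg (hence both trivial in the paper's sense) and no genuinely two-sided component at all --- but this does not affect your conclusion, since those one-sided components are precisely the ``constant-in-one-direction'' squares you already list.
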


For this to make sense, we note that it is straightforward to check that $\vee$ preserves balanced squares using \Cref{lem:balanced_bicartesian}.

\begin{proof}
By \Cref{prop:balanced_factorization} any balanced square factors into basic squares or trivial squares on generating cofaces. This collection of squares is preserved by taking $\vee$-components, so it suffices to list the basic squares and trivial squares on generating cofaces which arise as canonical $\vee$-components, and these are precisely the squares above.
\end{proof}

\section{Completeness and exactness properties}
\label{sec:compl_exact}

In this section, we consider properties of the set of squares in $\Delta$ that a given simplicial set $X$ sends to pullbacks or weak pullbacks in $\Set$.

\subsection*{Completeness}

We say a simplicial set $X$ is \emph{complete} with respect to a given collection of squares in $\Delta$ if it sends those squares to weak pullback squares. This terminology is motivated by the following section on lifting properties, where completeness corresponds to the ability to ``complete'' a certain type of diagram in $X$ to a simplex according to the maps in the square.  For fixed $X$ we consider the largest collection of squares with this property.

\begin{defn}
For a simplicial set $X$, let $\Comp(X)$ denote the set of all squares in $\Delta$ sent to weak pullbacks by $X$.
\end{defn}

Many squares in $\Delta$ belong to $\Comp(X)$ for any $X$.

\begin{prop}\label{always_comp}
$\Comp(X)$ contains all trivial squares, pushouts of codegeneracy maps, and squares of the form below with $f$ a coface map.
\[
\begin{tikzcd}
	{[m]} \arrow[equals]{r} \arrow[equals]{d} & {[m]} \arrow{d}[swap]{f} \\
	{[m]} \arrow{r}{f} & {[n]}
\end{tikzcd}
\]
\end{prop}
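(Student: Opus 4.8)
The plan is to handle the three families separately, in each case applying the contravariant functor $X$ and then checking the elementwise description of weak pullbacks in $\Set$ recorded just after \Cref{weak pullback}. A trivial square is immediate: $X$ carries it to a square in $\Set$ with a parallel pair of identities, and any such square is a strong (hence weak) pullback.

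For a square of the third form, $X$ produces
\[
\begin{tikzcd}
	X_n \arrow{r}{X(f)} \arrow{d}[swap]{X(f)} & X_m \arrow[equals]{d} \\
	X_m \arrow[equals]{r} & X_m
\end{tikzcd}
\]
and the weak pullback condition for this square (whose cospan is $X_m \xrightarrow{\id} X_m \xleftarrow{\id} X_m$) is simply that $X(f) \colon X_n \to X_m$ be surjective --- equivalently, that the induced map into the pullback be split epic, cf.\ \Cref{pullback_split_epi}. This holds because $f$, being a coface map, is a composite of generating cofaces $d^i$, each of which is split monic in $\Delta$ with retraction a generating codegeneracy (by the cosimplicial identity $s^i d^i = \id$); hence $f$ is split monic and $X(f)$ is split epic, so in particular surjective.

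For a pushout of codegeneracy maps, part~(c) of \Cref{pure_pushouts} exhibits it, through horizontal and vertical composition, in terms of the two squares of \Cref{type2deg} and trivial squares on generating codegeneracies. Since weak pullbacks are closed under horizontal and vertical composition and trivial squares are already handled, it suffices to treat the two squares of \Cref{type2deg}. Under $X$ the right-hand one becomes a square with identities along its top and left edges and a common degeneracy $s_i$ along the remaining two edges; since $s_i = X(s^i)$ is monic, any two lifts must coincide, so the square is a weak (and, by \Cref{mono_strong_pullback}, strong) pullback. Under $X$ the left-hand square of \Cref{type2deg} becomes
\[
\begin{tikzcd}
	X_n \arrow{r}{s_j} \arrow{d}[swap]{s_i} & X_{n+1} \arrow{d}{s_i} \\
	X_{n+1} \arrow{r}{s_{j+1}} & X_{n+2}
\end{tikzcd}
\]
with $0 \le i \le j \le n$. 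Given $b,c \in X_{n+1}$ with $s_i(b) = s_{j+1}(c)$, the filler should be $a \coloneqq d_i(c) \in X_n$, which I would verify using the simplicial identities $d_i s_{j+1} = s_j d_i$ (valid as $i < j+1$), $d_i s_i = \id$, $s_i s_j = s_{j+1} s_i$ (valid as $i \le j$), and $d_{j+1} s_{j+1} = \id$:
\begin{align*}
	s_j(a) &= s_j d_i(c) = d_i s_{j+1}(c) = d_i s_i(b) = b, \\
	s_i(a) &= d_{j+1} s_{j+1} s_i(a) = d_{j+1} s_i s_j(a) = d_{j+1} s_i(b) = d_{j+1} s_{j+1}(c) = c.
\end{align*}

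The one piece with genuine content is this last computation; everything else is bookkeeping of how operators transpose under the contravariant $X$, together with the facts that cofaces and codegeneracies become split epis and split monos respectively. The step I expect to take the most care is lining up the indices in the simplicial identities after transposing to $X$, and in particular confirming that $d_i(c)$ --- rather than $d_j(b)$ --- is the correct filler for the left-hand square of \Cref{type2deg}. (Alternatively, this last case admits a direct proof for an arbitrary pushout of codegeneracies in $\Delta$: put $b$ and $c$ into Eilenberg--Zilber normal form, use uniqueness to see their nondegenerate parts agree, and apply the universal property of the pushout in $\Delta$ to produce the filler; the reduction to \Cref{type2deg} has the advantage of staying inside $\Delta$.)
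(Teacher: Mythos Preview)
Your proof is correct. The treatment of trivial squares and of the third family (via $X(f)$ being split epic) is essentially the same as the paper's.

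The genuine difference is in the codegeneracy case. The paper dispatches it in one line: pushouts of codegeneracies in $\Delta$ are \emph{absolute} pushouts (it cites Joyal--Tierney), so any simplicial set automatically sends them to (strong) pullbacks. You instead invoke \Cref{pure_pushouts}(c) to reduce to the two basic squares of \Cref{type2deg} and verify each by hand with simplicial identities; your computation for the left-hand square (filler $a = d_i(c)$, using $d_i s_{j+1} = s_j d_i$ and $s_{j+1} s_i = s_i s_j$) is correct. Your approach is more elementary and entirely self-contained within the paper, at the cost of length; the paper's is conceptually cleaner and makes manifest that these squares are in fact sent to \emph{strong} pullbacks (as recorded later in \Cref{always_ex}). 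Note that your argument also yields strong pullbacks, since in both basic squares the outgoing maps from $X_n$ are degeneracies and hence monic, so \Cref{mono_strong_pullback} applies.

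One small expositional point: for the right-hand square of \Cref{type2deg}, ``any two lifts must coincide'' is not by itself an argument for weak pullback (that's uniqueness, not existence). The correct phrasing is that $s_i$ monic forces $b=c$, whence $a \coloneqq b = c$ is a lift through the identities. This is what you mean, but the sentence as written reads backwards.
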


\begin{proof}
Trivial squares are sent to trivial squares, which are always pullbacks. Pushouts of codegeneracies are absolute pushouts by \cite[Theorem 1.2.1]{JoyalTierneyNotes}, meaning any contravariant functor sends them to pullbacks. $X$ sends the square above to the following.
\[
\begin{tikzcd}
	X_n \arrow{r}{X_f} \arrow{d}[swap]{X_f} & X_m \arrow[equals]{d} \\
	X_m \arrow[equals]{r} & X_m
\end{tikzcd}
\]
The pullback of the cospan is the identity square on $X_m$, with induced map from $X_n$ necessarily given by the split epimorphism $X_f$, so the square is a weak pullback by \Cref{pullback_split_epi}.
\end{proof}

From weak pullbacks being closed under composition, we immediately get analogous properties of $\Comp(X)$.

\begin{prop}\label{compose_comp}
$\Comp(X)$ contains all trivial squares and is closed under horizontal and vertical compositions of squares in $\Delta$.
\end{prop}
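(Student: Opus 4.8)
The plan is to invoke two facts that are already available in the excerpt. First, trivial squares belong to $\Comp(X)$: this is exactly the content of the first sentence of the proof of \Cref{always_comp}, since $X$ sends a trivial square to a trivial square and trivial squares are always (strong, hence weak) pullbacks. Second, weak pullbacks are closed under horizontal and vertical pasting of squares; this is stated in the subsection on weakly cartesian squares, where it is noted that ``like strong pullbacks (and by the same argument), weak pullbacks are closed under horizontal and vertical composition of squares.''

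With those two ingredients the proof is essentially a one-liner. First I would recall that for any composable pair of squares in $\Delta$, applying the functor $X$ yields a composable pair of squares in $\Set$ (the mirror-image convention makes horizontal and vertical composition in $\Delta$ correspond to vertical and horizontal composition in $\Set$, but this is immaterial since the closure property holds in both directions). Then, if the two original squares lie in $\Comp(X)$, their images under $X$ are weak pullbacks, so by the closure property their composite image is a weak pullback, meaning the composite square lies in $\Comp(X)$. Combined with the trivial-square observation, this gives exactly the statement.

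There is no real obstacle here; the only thing to be slightly careful about is that ``composition of squares in $\Delta$'' must be interpreted so that the composites are again well-defined squares (the shared edge must match), which is automatic, and that we are allowed to paste squares even when the individual pieces are not pushouts — $\Comp(X)$ is defined purely in terms of being sent to weak pullbacks, with no pushout hypothesis, so nothing extra is needed.

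\begin{proof}
Trivial squares lie in $\Comp(X)$: a trivial square in $\Delta$ has a pair of parallel identity arrows, so $X$ sends it to a square in $\Set$ with a pair of parallel identity arrows, which is a (strong, hence weak) pullback. For closure under composition, suppose two squares in $\Delta$ are horizontally (resp.\ vertically) composable and both lie in $\Comp(X)$. Applying $X$ yields two composable squares in $\Set$, each of which is a weak pullback. As noted above, weak pullbacks are closed under horizontal and vertical pasting of squares, so the image of the composite square is again a weak pullback. Hence the composite square lies in $\Comp(X)$.
\end{proof}
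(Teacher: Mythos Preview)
Your proof is correct and follows exactly the same approach as the paper, which simply notes that the proposition is immediate from weak pullbacks being closed under composition (and the trivial-square observation already made in \Cref{always_comp}). Your write-up just spells this out in slightly more detail.
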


We can now use the factorization results of the previous sections to characterize when $X$ is complete with respect to various collections of squares.

\begin{thm}
	\label{complete_characterization}
	Let $X$ be a simplicial set. Then:
\end{thm}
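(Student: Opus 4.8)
I would prove each claimed equivalence by the same two-step template: the ``only if'' direction is immediate because each proposed generating collection of squares is contained in the corresponding larger class, so completeness with respect to the larger class implies completeness with respect to the generators. For the substantive ``if'' direction, I would take an arbitrary square in the class in question and rewrite it, via the factorization results of \Cref{sec:pushout_char}, as an iterated horizontal and vertical composite of squares of three kinds: squares from the proposed generating family, trivial squares, and squares already known to lie in $\Comp(X)$ for every $X$. Then \Cref{compose_comp} --- closure of $\Comp(X)$ under horizontal and vertical composition, together with the fact that it contains all trivial squares --- closes the argument.

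\textbf{Key steps.} Concretely, for completeness with respect to pushout squares in $\Delta$ I would apply \Cref{prop:all_basic_pushouts} to factor an arbitrary pushout into basic pushouts and trivial pushouts of generators, and use \Cref{pure_pushouts} to control which basic types occur (pushouts of two cofaces reduce to squares of the form \Cref{type2face}, a coface and a codegeneracy to \Cref{typemixed}, two codegeneracies to \Cref{type2deg}). By \Cref{always_comp}, pushouts of codegeneracies, and the special squares pictured there, are always in $\Comp(X)$ and may be discarded; what remains are exactly the basic pushouts that must be postulated. For completeness with respect to balanced coface squares the same template applies with \Cref{prop:balanced_factorization} in place of \Cref{prop:all_basic_pushouts}: every nontrivial balanced coface square factors into basic coface squares \Cref{basic_coface_squares}, and trivial squares lie in $\Comp(X)$ by \Cref{compose_comp}. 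The coface-only (``inner span'' style) variant is the same argument invoking \Cref{pure_pushouts} part (a).

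\textbf{Main obstacle.} The delicate point is the bookkeeping of which basic squares are \emph{unconditionally} weak pullbacks and hence can be pruned from the minimal generating family, versus which must genuinely be retained. Pushouts of codegeneracies are absolute and handled once and for all by \Cref{always_comp}; but, for instance, the middle squares of \Cref{typemixed} are not even concrete, and a short computation with the simplicial identities (exploiting the non-injectivity of the relevant face map) shows they fail to be weak pullbacks for a general $X$, so they cannot be omitted. This also explains why the reduction cannot be pushed below the level of the basic (resp.\ basic coface) squares down to the $\vee$-generators of \Cref{lem:basic_decompose} or \Cref{balanced_decompose}: $\vee$-products need not preserve weak pullbacks of sets, since the relevant Segal-type comparison map need be neither surjective nor split epic. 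Thus the real content of the proof is the careful matching of the output of the factorization theorems to the claimed generating lists, the algebraic input having already been supplied by the pushout characterization results.
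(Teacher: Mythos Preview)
Your proposal is correct and follows essentially the same route as the paper: factor an arbitrary square in each class into basic pieces via \Cref{pure_pushouts} (or \Cref{prop:balanced_factorization} for balanced squares), invoke \Cref{always_comp} to discard the codegeneracy-pushout factors, and conclude by closure of $\Comp(X)$ under composition (\Cref{compose_comp}). The paper's proof is just a terser version of exactly this argument; your ``Main obstacle'' paragraph adds useful commentary on why the generating lists cannot be shrunk further, but that is not needed for the equivalences as stated.
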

\hspace{-2.4cm}\begin{centering}
	\renewcommand{\arraystretch}{1.2} 
	\begin{tabular}{c|c}
		$\Comp(X)$ contains all & if and only if it contains the squares of \\
		\hline
		pushouts of coface maps & \Cref{type2face} \\
		pushouts of one coface and one degeneracy map & \Cref{typemixed} \\
		all pushouts & \Cref{type2face} and \Cref{typemixed} \\
		concrete pushouts & \Cref{type2face} and the left and right squares in \Cref{typemixed} \\
		balanced squares & basic coface squares
	\end{tabular}
\end{centering}
\medskip

\begin{proof}
The first two claims follow from \Cref{pure_pushouts} and \Cref{compose_comp}. The next two claims additionally rely on \Cref{always_comp} which removes the need to check for pushouts of codegeneracies. The final claim follows from \Cref{prop:balanced_factorization} and \Cref{compose_comp}.
\end{proof}

\begin{prop}\label{full_discrete}
If $\Comp(X)$ contains the squares of the form below for $0 \le i \le n$, then $X$ is discrete.
\[
	\begin{tikzcd}
		{[n+1]} \arrow{r}{d^{i+1}} \arrow{d}[swap]{s^i}	& {[n+2]} \arrow{d}{s^i s^i}	\\
		{[n]} \arrow[equals]{r}			& {[n]}			\\
	\end{tikzcd}\qquad\qquad\begin{tikzcd}
		{[n-2]} \arrow{r}{d^i} \arrow{d}[swap]{d^i} & {[n-1]} \arrow{d}[swap]{d^{i+1}} \\
		{[n-1]}  \arrow{r}{d^i} & {[n]}	\\
	\end{tikzcd}
\]
\end{prop}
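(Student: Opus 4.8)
The plan is to translate the two weak-pullback hypotheses into concrete statements about the faces and degeneracies of $X$, and then combine them to force the degeneracy $s_0\colon X_{n-1}\to X_n$ to be surjective for every $n\geq 1$, which already makes $X$ discrete. First I would unpack the hypotheses. Applying $X$ contravariantly to the first square at a generic level and with $i=0$, the weak-pullback condition says: for every $x\in X_{\ell}$ and every $z\in X_{\ell+2}$ with $d_1(z)=s_0(x)$ one has $z=s_0 s_0(x)$. Applying $X$ to the second square with $i=0$, the weak-pullback condition says: for all $x,z\in X_{m-1}$ with $d_0(x)=d_0(z)$ there exists $w\in X_m$ with $d_0(w)=x$ and $d_1(w)=z$. (Only the $i=0$ instances are used; the care here is in keeping variances, orientations, and index shifts straight.)

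The core is then a short simplex chase. Fix $n\geq 1$ and $u\in X_n$, and set $y\coloneqq d_0(u)$. Since $d_0 s_0=\id$, the elements $u$ and $s_0(y)$ of $X_n$ have equal $d_0$-face, so the second condition (at level $m=n+1$) yields $w\in X_{n+1}$ with $d_0(w)=u$ and $d_1(w)=s_0(y)$. But $d_1(w)=s_0(y)$ is exactly the hypothesis of the first condition (at level $\ell=n-1$, taking $x\coloneqq y$), which forces $w=s_0 s_0(y)$. Therefore $u=d_0(w)=d_0 s_0 s_0(y)=s_0(y)=s_0(d_0(u))$, using $d_0 s_0=\id$ once more.

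Since $u\in X_n$ was arbitrary, this gives $s_0 d_0=\id_{X_n}$ for all $n\geq 1$; together with $d_0 s_0=\id$ it shows that $d_0\colon X_n\to X_{n-1}$ and $s_0\colon X_{n-1}\to X_n$ are mutually inverse bijections for all $n\geq 1$. In particular every positive-dimensional simplex of $X$ is degenerate, so $X$ is discrete (by the Eilenberg--Zilber lemma); alternatively, a quick induction on $i$ using $d_1 s_0=\id$, $d_i s_0=s_0 d_{i-1}$ for $i\geq 2$, and $d_i s_i=\id$ shows that every face and degeneracy map of $X$ is a bijection, whence $X\cong\mathrm{const}_{X_0}$.

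The only delicate step is the unpacking in the first paragraph — getting the contravariant translation and the index bookkeeping exactly right — whereas the mathematical content, the chase in the second paragraph, is essentially two lines; its one idea is to use the second square to arrange that the face $d_1(w)$ is degenerate so that the first square becomes applicable. I do not anticipate any genuine obstacle beyond this bookkeeping.
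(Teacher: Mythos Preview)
Your proof is correct. The underlying idea is the same as the paper's---combine the two hypothesis squares to force $s_0 d_0 = \id$, hence $s_0$ is a bijection---but the execution differs. The paper argues categorically: it horizontally composes the mirror of the first square with (a shifted instance of) the second to obtain
\[
\begin{tikzcd}
{[n]} \arrow[equals]{r} \arrow{d}[swap]{d^i} & {[n]} \arrow[equals]{d} \\
{[n+1]} \arrow{r}{s^i} & {[n]}
\end{tikzcd}
\]
in $\Comp(X)$ by closure under composition, then invokes \Cref{mono_strong_pullback} (since $s_i$ is split monic) to upgrade this to a strong pullback, whence $s_i$ is an isomorphism for all $i$. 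Your argument instead unpacks both weak-pullback conditions at $i=0$ and performs the two-step element chase by hand: produce $w$ from the coface square, then use the degeneracy square to pin down $w = s_0 s_0(y)$. This is exactly what the composition of weak pullbacks encodes, done elementwise. The paper's route is cleaner and yields all $s_i$ as isomorphisms at once; yours is more self-contained, needing no auxiliary lemmas about $\Comp(X)$ or monics, at the cost of only directly handling $i=0$ (which, as you note, already suffices for discreteness).
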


\begin{proof}
Observe the squares below left compose to the square below right.
\[
	\begin{tikzcd}
		{[n]} \arrow{r}{d^i} \arrow{d}[swap]{d^i} & {[n+1]} \arrow{r}{s^i} \arrow{d}[swap]{d^{i+1}}	& {[n]} \arrow[equals]{d}	\\
		{[n+1]} \arrow{r}{d^i} & {[n+2]} \arrow{r}{s^i s^i}			& {[n]}			\\
	\end{tikzcd}\qquad\qquad\begin{tikzcd}
		{[n]} \arrow[equals]{r} \arrow{d}[swap]{d^i} & {[n]} \arrow[equals]{d} \\
		{[n+1]}  \arrow{r}{s^i} & {[n]}	\\
	\end{tikzcd}
\]
As both of the squares above left are in $\Comp(X)$, so is the square above right by \Cref{compose_comp}. Furthermore, as $s^i$ is a split epic $s_i : X_n \to X_{n+1}$ is a split monic, the right square is sent to a strong pullback by \Cref{mono_strong_pullback}. $s_i$ is then a pullback of the identity, and hence an isomorphism. It follows that all face and degeneracy maps of $X$ are isomorphisms, so that $X$ is discrete.
\end{proof}

In particular, this is the case if $\Comp(X)$ contains both pushouts and balanced squares.

\subsection*{Exactness}

Analogously to completeness, $X$ is \emph{exact} with respect to a collection of squares in $\Delta$ if it sends those squares to strong pullbacks.

\begin{defn}
For a simplicial set $X$, let $\Ex(X)$ denote the set of all squares in $\Delta$ sent to pullbacks by $X$.
\end{defn}

The following was shown in the proof of \Cref{always_comp}.

\begin{prop}\label{always_ex}
$\Ex(X)$ contains all trivial squares and pushouts of degeneracies.
\end{prop}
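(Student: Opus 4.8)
The plan is to read both claims directly off the proof of \Cref{always_comp}, where each was in fact established already in the strong (rather than merely weak) form. For trivial squares: since $X$ is a functor, it carries a square in $\Delta$ with a parallel pair of identities to a square in $\Set$ with a parallel pair of identities, and any such square is a (strong) pullback. Hence every trivial square lies in $\Ex(X)$. This step needs nothing beyond functoriality.

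For pushouts of codegeneracy maps, the key input is \cite[Theorem 1.2.1]{JoyalTierneyNotes}, which tells us that such pushouts in $\Delta$ are \emph{absolute} pushouts, that is, preserved by every functor out of $\Delta$; equivalently, every contravariant functor on $\Delta$ sends them to pullbacks. Applying this to $X : \Delta^\op \to \Set$ shows that $X$ sends each pushout square of codegeneracies to a strong pullback in $\Set$, so these squares also lie in $\Ex(X)$.

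There is essentially no obstacle: the only content is recalling the notion of an absolute pushout and the cited fact that pushouts of codegeneracies are of this kind, both of which already appear in the proof of \Cref{always_comp}. In effect, that earlier proof merely downgraded these two strong-pullback observations to the corresponding weak-pullback statements in order to feed them into $\Comp(X)$; the present proposition simply records the sharper facts, and so its proof amounts to the sentence ``this was shown in the proof of \Cref{always_comp}.''
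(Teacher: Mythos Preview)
Your proposal is correct and matches the paper exactly: the paper's entire proof is the single sentence ``The following was shown in the proof of \Cref{always_comp},'' which is precisely what you arrive at. You have correctly identified that both facts (trivial squares sent to pullbacks by functoriality, pushouts of codegeneracies sent to pullbacks by absoluteness via \cite[Theorem 1.2.1]{JoyalTierneyNotes}) were already established in strong form in that earlier proof.
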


As any strong pullback is a weak pullback, $\Ex(X) \subseteq \Comp(X)$. Some squares have the property that if they belong to $\Comp(X)$ they must further belong to $\Ex(X)$.

\begin{prop}\label{comp_to_ex}
$\Ex(X)$ contains all squares in $\Comp(X)$ of the form below with either $h$ or $k$ a codegeneracy.
\[
\begin{tikzcd}
	{[m]} \ar{r}{} \ar{d}[swap]{}  & {[p]} \ar{d}[swap]{h} \\
	{[q]} \ar{r}{k} & {[n]}
\end{tikzcd}
\]
\end{prop}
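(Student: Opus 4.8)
The plan is to exploit the fact that codegeneracy maps are split epimorphisms in $\Delta$, so that $X$ sends them to split monomorphisms in $\Set$, and then to leverage the mono case of \Cref{mono_strong_pullback} after transposing the square. Concretely, suppose we are given a square of the stated form lying in $\Comp(X)$, with (say) $h : [p] \to [n]$ a codegeneracy; the argument for $k$ is symmetric, and in fact it is exactly the mirror-image symmetry under which we identify a square with its transpose. Applying $X$ yields a commuting square in $\Set$
\[
\begin{tikzcd}
	X_n \ar{r}{X_k} \ar{d}[swap]{X_h} & X_q \ar{d} \\
	X_p \ar{r} & X_m
\end{tikzcd}
\]
which is a weak pullback by hypothesis. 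Since $h$ is a split epimorphism in $\Delta$ (every codegeneracy map has a section given by a coface map), the contravariant functor $X$ sends it to a split monomorphism $X_h : X_n \to X_p$ in $\Set$. In the transposed picture, $X_h$ plays the role of the map "$f$ or $g$" in the hypothesis of \Cref{mono_strong_pullback}.

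The key step is then a direct appeal to \Cref{mono_strong_pullback}: a weak pullback square one of whose two maps out of the initial corner is monic is automatically a strong pullback. Since $X_h$ is monic (being split monic), the square above is a strong pullback in $\Set$, which is to say the original square lies in $\Ex(X)$. I would phrase the whole argument in two sentences after recalling that codegeneracies split: apply $X$, observe the relevant map is split monic hence monic, and invoke \Cref{mono_strong_pullback}.

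I do not expect any genuine obstacle here; the only points requiring a word of care are (i) noting explicitly that codegeneracy maps are split epic in $\Delta$—which the paper has already remarked ("which is always (split) monic" for degeneracy maps of a simplicial set, and "$s^i$ is a split epic" appears in the proof of \Cref{full_discrete})—and (ii) making sure the orientation of the square matches the hypothesis of \Cref{mono_strong_pullback}, which is handled by the standing convention that a square is identified with its mirror image. So the proof is essentially a one-line consequence of \Cref{mono_strong_pullback} once split-epicness of codegeneracies is recorded.
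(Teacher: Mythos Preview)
Your proposal is correct and follows exactly the paper's own proof: codegeneracies are split epic, so $X$ sends them to split monics, and then \Cref{mono_strong_pullback} upgrades the weak pullback to a strong one. The only difference is that you spell out the orientation and split-epicness more carefully than the paper's one-sentence version.
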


\begin{proof}
If $h$ or $k$ is split epic, $X$ sends it to a split monic, so $X$ sends the square to one of the form in \Cref{mono_strong_pullback}.
\end{proof}

Closure of pullbacks under composition, along with the pullback lemma, give us the following.

\begin{prop}\label{compose_ex}
$\Ex(X)$ is closed under composition and left- or upper-cancellation of squares.
\end{prop}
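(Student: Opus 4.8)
The plan is to deduce everything from the two facts quoted just before the statement — that pullbacks in $\Set$ are closed under composition, and the pullback lemma (\Cref{pullback_lemma}) — by tracking how the contravariant functor $X\colon\Delta^\op\to\Set$ transports composites of squares. The one point needing care is the variance bookkeeping, which is precisely why it is \emph{left-} and \emph{upper-}, not right- or lower-, cancellation that survives.

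First I would record the effect of $X$ on composites. A horizontal composite of squares in $\Delta$
\[
\begin{tikzcd}
	{[m]} \ar{r} \ar{d} & {[p]} \ar{r} \ar{d} & {[\ell]} \ar{d} \\
	{[m']} \ar{r} & {[p']} \ar{r} & {[\ell']}
\end{tikzcd}
\]
is carried to a horizontal composite of squares in $\Set$,
\[
\begin{tikzcd}
	X_{\ell'} \ar{r} \ar{d} & X_{p'} \ar{r} \ar{d} & X_{m'} \ar{d} \\
	X_\ell \ar{r} & X_p \ar{r} & X_m,
\end{tikzcd}
\]
in which the image of the \emph{left} square of the $\Delta$-diagram is the \emph{right} square of the $\Set$-diagram — the one adjacent to the terminal corner $X_m$ — while the image of the \emph{right} square is the \emph{left} square, and the image of the whole composite is the outer rectangle. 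The analogous statement for vertical composites follows by transposing, since contravariance also reverses the vertical direction.

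Closure under composition is then immediate: if both squares of a composite in $\Delta$ lie in $\Ex(X)$, then both squares of the corresponding $\Set$-diagram are pullbacks, so the outer rectangle is a pullback since pullbacks are closed under composition, and this outer rectangle is the image of the composite square. For left-cancellation, suppose the left square and the horizontal composite both lie in $\Ex(X)$. By the previous paragraph the image of the left square is the $\Set$-square adjacent to the terminal vertex and the image of the composite is the outer rectangle; both are pullbacks, so \Cref{pullback_lemma} shows that the remaining $\Set$-square — the image of the right square in $\Delta$ — is a pullback, i.e.\ the right square lies in $\Ex(X)$. Upper-cancellation is the same argument run on a vertical composite, using the evident vertical transpose of \Cref{pullback_lemma}.

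I expect no real obstacle here: once the transport described in the second paragraph is pinned down, the rest is a direct appeal to closure of pullbacks under composition and to the pullback lemma. The only place one can slip is in matching the hypotheses of \Cref{pullback_lemma}, which requires that the square \emph{adjacent to the terminal vertex} be assumed cartesian; verifying that the left (resp.\ upper) square of the $\Delta$-diagram is the one that lands there under $X$ is the crux of the argument.
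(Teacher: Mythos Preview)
Your argument is correct and matches the paper's approach: the paper simply invokes closure of pullbacks under composition together with \Cref{pullback_lemma}, and you have spelled out the variance bookkeeping that makes this work, correctly identifying that the left (resp.\ upper) square in $\Delta$ is carried by $X$ to the square adjacent to the terminal vertex in $\Set$, which is exactly the hypothesis position in \Cref{pullback_lemma}.
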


Unlike $\Comp(X)$, we can give conditions for $\Ex(X)$ to be closed under $\vee$-products.

\begin{prop}
	\label{prop:star_product_pullbacks}
	If $\Ex(X)$ contains all squares of the following form, then it is closed under $\vee$-products.
\[
\begin{tikzcd}
	{[0]} \arrow{r}{a} \arrow{d}[swap]{0} & {[a]} \arrow{d}[swap]{} \\
	{[b]} \arrow{r}{}  & {[a+b]}
\end{tikzcd}
\]
\end{prop}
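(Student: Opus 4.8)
The plan is to reduce to a $\vee$-product of two squares using associativity of $\vee$, and then to recognize $X$ applied to such a binary $\vee$-product as a fibre product over $X_0$ of the two squares obtained by applying $X$ to the factors, with the hypothesis being exactly what is needed to control the four corners.

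First I would reduce to the binary case. Suppose $S_0,\ldots,S_{r+1}\in\Ex(X)$ have a $\vee$-product. Write it as $(S_0\vee\cdots\vee S_r)\vee S_{r+1}$. Since all four maps of each $S_i$ with $i\le r$ preserve the maximal element, the square $S_0\vee\cdots\vee S_r$ is a legitimate $\vee$-product, it lies in $\Ex(X)$ by the inductive hypothesis, and all four of its maps preserve the maximal element (because $S_r$ does); meanwhile all four maps of $S_{r+1}$ preserve the minimal element. So it suffices to treat two squares $S_0,S_1\in\Ex(X)$ in which every map of $S_0$ preserves the maximal element and every map of $S_1$ preserves the minimal element.

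Now for the binary case: because every map of $S_0$ preserves the maximal element, sending each object $[\ell]$ occurring in $S_0$ to the map $[0]\to[\ell]$ picking out $\ell$ is natural, so $S_0$ lifts to a square in the undercategory $[0]/\Delta$; likewise $S_1$ lifts via the maps $[0]\to[\ell]$ picking out $0$. For objects $[\ell_0],[\ell_1]$, the $\vee$-product $[\ell_0+\ell_1]$ is the pushout of $[\ell_0]\leftarrow[0]\to[\ell_1]$ along precisely these two maps, and this pushout square is (up to reordering) one of the squares in the statement, hence in $\Ex(X)$; moreover for max-preserving $u_0:[\ell_0]\to[\ell_0']$ and min-preserving $u_1:[\ell_1]\to[\ell_1']$ the map $u_0\vee u_1$ restricts to $u_0$ and $u_1$ on the two summands. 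Applying $X$ therefore gives natural isomorphisms $X_{\ell_0+\ell_1}\cong X_{\ell_0}\times_{X_0}X_{\ell_1}$, with the fibre product taken along the two ``endpoint'' restrictions to $X_0$, under which $X(u_0\vee u_1)$ corresponds to $X(u_0)\times_{X_0}X(u_1)$. Consequently $X(S_0\vee S_1)$ is isomorphic to the commuting square formed corner by corner by the fibre product over $X_0$ of $X(S_0)$ and $X(S_1)$, which makes sense since the previous paragraph exhibits both $X(S_0)$ and $X(S_1)$ as commuting squares over $X_0$. Finally, a corner-wise fibre product over $X_0$ of two pullback squares of sets is again a pullback square, since the class of pullback squares in $\Set$ is closed under limits (computed pointwise, using that limits commute with limits) and a fibre product over the constant square $X_0$ is such a limit. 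As $S_0,S_1\in\Ex(X)$, the squares $X(S_0),X(S_1)$ are pullbacks, hence so is $X(S_0\vee S_1)$, i.e.\ $S_0\vee S_1\in\Ex(X)$.

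The main obstacle is the bookkeeping in the third paragraph: verifying that the isomorphisms $X_{\ell_0+\ell_1}\cong X_{\ell_0}\times_{X_0}X_{\ell_1}$ supplied by the hypothesis are compatible with \emph{all} the face maps appearing in $S_0\vee S_1$ (this compatibility is where the hypothesis is used in full strength, not just on a single square), together with carefully tracking which endpoint, $0$ or $\ell$, is being restricted to on the $S_0$ side versus the $S_1$ side. Everything else is formal.
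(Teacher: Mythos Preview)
Your proof is correct and follows essentially the same approach as the paper: reduce to the binary case by associativity, use the hypothesis to identify $X_{\ell_0+\ell_1}\cong X_{\ell_0}\times_{X_0}X_{\ell_1}$ at each corner, and conclude that $X(S_0\vee S_1)$ is a cornerwise fibre product of the pullback squares $X(S_0)$ and $X(S_1)$, hence a pullback since limits commute with limits. The paper's version is terser---it just displays the three relevant squares and invokes ``limits commute with limits''---while you spell out the naturality bookkeeping more explicitly, but the argument is the same.
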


\begin{proof}
As $\vee$ is associative, 
by induction it suffices to show this for binary $\vee$-products. Assume the squares above belong to $\Ex(X)$, as well as two generic squares which admit a $\vee$-product: 
\begin{center}
	\begin{tikzcd}
		{[m_0]} \arrow{r}{} \arrow{d}[swap]{} & {[p_0]} \arrow{d}[swap]{} \\
		{[q_0]} \arrow{r}{} & {[n_0]} 
	\end{tikzcd}\qquad\qquad
	\begin{tikzcd}
		{[m_1]} \arrow{r}{} \arrow{d}[swap]{} & {[p_1]} \arrow{d}[swap]{} \\
		{[q_1]} \arrow{r}{} & {[n_1]} 
	\end{tikzcd}
\end{center}
By assumption, the squares below left ($i=0,1$) and center are pullbacks, along with those like below center with $n$ replaced with $r,p,q$. Since limits commute with limits, the square below right is also a pullback.
\begin{center}
	\begin{tikzcd}
		X_{n_i} \arrow{r}{} \arrow{d}[swap]{} & X_{p_i} \arrow{d}[swap]{} \\
		X_{q_i} \arrow{r}{} & X_{r_i} 
	\end{tikzcd}\qquad\qquad
	\begin{tikzcd}
		X_{n_0 + n_1} \ar{r} \ar{d}		& X_{n_0} \ar{d} \\
		X_{n_1} \ar{r}			& X_0
	\end{tikzcd}\qquad\qquad
	\begin{tikzcd}
		X_{n_0+n_1} \arrow{r}{} \arrow{d}[swap]{} & X_{p_0+p_1} \arrow{d}[swap]{} \\
		X_{q_0+q_1} \arrow{r}{} & X_{r_0+r_1} 
	\end{tikzcd}
\end{center}
\end{proof}

This lets us use the results on generators under $\vee$ and composition to give much simpler conditions for a simplicial set $X$ to be exact with respect to various classes of squares.

\begin{thm}
	\label{exact_characterization}
	Let $X$ be a simplicial set.
	\begin{enumerate}
		\item $\Ex(X)$ contains all concrete pushouts if and only if it contains the squares below left.
		\item $\Ex(X)$ contains all pushouts if and only if it contains the squares below left and center left.
		\item $\Ex(X)$ contains all balanced squares if and only if it contains the squares below left, center right, and right.
	\end{enumerate}
\[
	\begin{tikzcd}
		{[0]} \arrow{r}{a} \arrow{d}[swap]{0} & {[a]} \arrow{d}[swap]{} \\
		{[b]} \arrow{r}{}  & {[a+b]}
	\end{tikzcd}\quad\begin{tikzcd}
		{[1]} \arrow{r}{d^1} \arrow{d}[swap]{s^0} & {[2]} \arrow{d}[swap]{} \\
		{[0]} \arrow[equals]{r}{} & {[0]} 
	\end{tikzcd}\quad\begin{tikzcd}
		{[0]} \arrow{r}{d^0} \arrow{d}[swap]{d^0} & {[1]} \arrow{d}[swap]{d^0} \\
		{[1]} \arrow{r}{d^1} & {[2]} 
	\end{tikzcd}\quad\begin{tikzcd}
		{[0]} \arrow{r}{d^1} \arrow{d}[swap]{d^1} & {[1]} \arrow{d}[swap]{d^1} \\
		{[1]} \arrow{r}{d^2} & {[2]} 
	\end{tikzcd}
\]
\end{thm}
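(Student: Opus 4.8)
The plan is to leverage the generation results from Section~\ref{sec:pushout_char}—namely \Cref{lem:basic_decompose} and \Cref{balanced_decompose}—together with the closure properties of $\Ex(X)$ established in \Cref{always_ex}, \Cref{compose_ex}, and \Cref{prop:star_product_pullbacks}. The ``only if'' directions of all three parts are immediate: the squares listed below left, center left, center right, and right are themselves (concrete) pushouts or balanced squares, so if $\Ex(X)$ contains the relevant class it contains these particular squares. The content is in the ``if'' directions.

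For part (a), I would argue as follows. Assume $\Ex(X)$ contains the squares below left, which are exactly the squares of the form in \Cref{prop:star_product_pullbacks}; hence by that proposition $\Ex(X)$ is closed under $\vee$-products. It is also closed under composition by \Cref{compose_ex} and contains all trivial squares and pushouts of degeneracies by \Cref{always_ex}. By \Cref{lem:basic_decompose} (and the remark immediately after it that the generators for concrete pushouts are all the listed squares except the bottom right one), every concrete pushout is built from the generating squares under $\vee$ and composition; each such generator is either trivial, a pushout of a codegeneracy (handled by \Cref{always_ex}), or of the form below left. So $\Ex(X)$ contains all of them and is closed under the two operations used to build concrete pushouts, giving the claim. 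For part (b), I would additionally note that the bottom right square of \Cref{lem:basic_decompose}—the one distinguishing general pushouts from concrete ones—is precisely the center left square in the statement; adding it to the generator set and invoking \Cref{lem:basic_decompose} in full yields that $\Ex(X)$ contains all pushouts.

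For part (c), the strategy is the same but using \Cref{balanced_decompose} in place of \Cref{lem:basic_decompose}: balanced squares are generated under $\vee$ and composition by the eight squares listed there. The first two are trivial; the next two (the $d^0$ and $d^1$ versions with parallel identities) are of the form below left with $a=1$, $b=0$ or $a=0$, $b=1$; the fifth ($d^1 : [1]\to[2]$ with parallel identities) is again of the form below left with $a=1,b=1$; and the last three are exactly the center right and right squares of the statement together with one more $\vee$-product/composite of those—so I need to check that the eighth generator of \Cref{balanced_decompose} is itself obtained from the center right and right squares under $\vee$ and composition (it should be a $\vee$-product with a trivial $[1]$-square, or a composite). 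Closure of $\Ex(X)$ under $\vee$ follows from \Cref{prop:star_product_pullbacks} since the below-left squares are assumed present, and closure under composition is \Cref{compose_ex}.

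The main obstacle I anticipate is bookkeeping rather than conceptual: verifying carefully that each of the finitely many generators in \Cref{lem:basic_decompose} and \Cref{balanced_decompose} really does lie in the $\vee$-and-composition closure of the small list in the theorem statement, in particular matching the last generator of \Cref{balanced_decompose} (the $d^1,d^1 / d^1,d^2$ square on $[1],[2],[2],[3]$) against a $\vee$-product of the center right or right square with a trivial square on $[1]$, and confirming that the center left square of part (b) is indeed the ``extra'' bottom-right generator of \Cref{lem:basic_decompose}. Once these identifications are pinned down, the proof is a short invocation of the closure lemmas exactly as in the proof of \Cref{complete_characterization}.
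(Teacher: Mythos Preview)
Your argument for parts (a) and (b) is correct and matches the paper's proof essentially verbatim: use \Cref{prop:star_product_pullbacks} to get $\vee$-closure from the left squares, then invoke \Cref{lem:basic_decompose} and \Cref{always_ex}.

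For part (c), however, there is a real gap. The eighth generator of \Cref{balanced_decompose},
\[
\begin{tikzcd}
{[1]} \arrow{r}{d^1} \arrow{d}[swap]{d^1} & {[2]} \arrow{d}[swap]{d^1} \\
{[2]} \arrow{r}{d^2} & {[3]}
\end{tikzcd}
\]
is \emph{not} obtainable from the center right or right squares by $\vee$-products with trivial squares or by direct composition. Its canonical $\vee$-decomposition returns the square itself (flanked by trivial $[0]$-squares), and since all four maps are generating cofaces any nontrivial horizontal or vertical factorization would have to pass through an identity. Your proposed ``$\vee$ with a trivial $[1]$-square'' fails concretely: the bottom map $d^2:[1]\to[2]$ of the right square does not preserve the maximum, and the top map $d^0:[0]\to[1]$ of the center right square does not preserve the minimum, so the relevant $\vee$-products are undefined.

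The paper instead uses the \emph{cancellation} part of \Cref{compose_ex}, which you cite but never invoke. One writes down two horizontal composites of squares with the same outer rectangle; in one composite both factors are already known to lie in $\Ex(X)$ (one is the center right square, the other a $\vee$-product of trivial squares), so the outer rectangle lies in $\Ex(X)$; in the other composite the left factor is the center right square, and then left-cancellation forces the right factor---which is the mirror of the eighth generator---into $\Ex(X)$. This is the missing idea in your part (c).
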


\begin{proof}
For the ``only if'' directions, note that for all $a,b$ the square above left is both a concrete pushout and balanced, the center left square is a pushout, and the left, center right, and right squares are balanced. We now consider the ``if'' direction.

$\Ex(X)$ is closed under composition, so if it is also closed under $\vee$ by containing the squares above left, it contains all (concrete) pushouts if and only if it contains the generators of (concrete) pushouts under $\vee$ and composition. By \Cref{lem:basic_decompose}, for concrete pushouts these generators are all trivial squares or pushouts of codegeneracies which are automatically in $\Ex(X)$, and for general pushouts they additionally contain the square above center left, proving the first two claims. 

For the third claim, it follows from \Cref{balanced_decompose} that $\Ex(X)$ contains all balanced squares if and only if it contains their generators under $\vee$ and composition, assuming it includes the squares above left. The only nontrivial generators are the center right and right squares above along with the following square, so further containing these suffices to show that $\Ex(X)$ includes all balanced squares.
\[
	\begin{tikzcd}
		{[1]} \arrow{r}{d^1} \arrow{d}[swap]{d^1} & {[2]} \arrow{d}[swap]{d^1} \\
		{[2]} \arrow{r}{d^2} & {[3]} 
	\end{tikzcd}
\]
We show that if $\Ex(X)$ contains the center right and right squares, it contains this one as well. Observe that the following two diagrams have the same composite square.
\[
\begin{tikzcd}
	{[0]} \arrow{r}{d^0} \arrow{d}[swap]{d^0} & {[1]} \arrow{r}{d^1} \arrow{d}[swap]{d^1} & {[2]} \arrow{d}[swap]{d^2} \\
	{[1]} \arrow{r}{d^0} & {[2]} \arrow{r}{d^1} & {[3]} 
\end{tikzcd}\qquad\qquad\begin{tikzcd}
	{[0]} \arrow{r}{d^0} \arrow{d}[swap]{d^0} & {[1]} \arrow{r}{d^0} \arrow{d}[swap]{d^1} & {[2]} \arrow{d}[swap]{d^2} \\
	{[1]} \arrow{r}{d^0} & {[2]} \arrow{r}{d^0} & {[3]} 
\end{tikzcd}
\]
The right square in the right composite is the $\vee$-product of the following trivial squares and therefore in $\Ex(X)$.
\[
\begin{tikzcd}
	{[1]} \arrow[equals]{r} \arrow{d}[swap]{d^1} & {[1]} \arrow{d}[swap]{d^1} \\
	{[2]} \arrow[equals]{r} & {[2]}
\end{tikzcd}
\qquad\qquad
\begin{tikzcd}
	{[0]} \arrow{r}{d^0} \arrow[equals]{d} & {[1]} \arrow[equals]{d} \\
	{[0]} \arrow{r}{d^0}  & {[1]}
\end{tikzcd}
\]
Therefore using \Cref{compose_ex}, the left composite above is in $\Ex(X)$ and by cancellation 
so is its right square, completing the proof.
\end{proof}

\begin{remark}
This shows that $\Ex(X)$ contains all concrete pushouts if and only if $X$ is the nerve of a category. Indeed, we have shown that $\Ex(X)$ contains all concrete pushouts precisely when $X_{a+b} \cong X_a \times_{X_0} X_b$ for all $a,b$, equivalent to having $X_n = X_1 \times_{X_0} \scdots{n} \times_{X_0} X_1$ for all $n$, the Segal condition for $X$ to be a nerve. 

Now let us consider the more specific situation where $\Ex(X)$ contains all pushouts, and ask what this means for the category of which $X$ is the nerve. $X$ sending the square above center left to a pullback is then equivalent to whenever a composable pair of morphisms (an element of $X_2$) has as composite arrow (in $X_1$) an identity (in the image of $s_0 : X_0 \to X_1$), both morphisms must be identities (in the image of $X_2 \to X_0$). In other words, $X$ is the nerve of a category in which no nontrivial morphisms compose to the identity. 

In the third case, when $\Ex(X)$ contains all balanced squares, the center right square above being sent to pullbacks ensures that every pair of morphisms $f : y \to z$, $g : x \to z$ in $X$ complete to a triangle with $h : x \to y$ and $fh=g$, so setting $g = \id_z$ provides any morphism $f$ with a right inverse $h$. Likewise the right square sent to a pullback provides each morphism with a left inverse, so that $X$ is the nerve of a groupoid.
\end{remark}

\begin{ex}\label{decomp_spaces}
Another class of simplicial sets defined by an exactness property is the (discrete special case of) decomposition spaces of \cite{GKT1} (or equivalently 2-Segal simplicial sets) which send the squares of \Cref{2segal_stiff_split_squares} to pullbacks. \cite[Proposition 3.5]{GKT1} shows that it suffices to check this for a smaller collection of squares, a result much like the cases considered in \Cref{exact_characterization}.
\end{ex}

\begin{ex}
A simplicial set $X$ for which $\Ex(X)$ contains the left and right squares of \Cref{typemixed} is a \emph{stiff} simplicial set in the sense of \cite[4.1]{GKT2}. That $X$ being stiff implies that all of the left and right squares of \cref{typemixed} are contained in $\Ex(X)$ is given by \cite[Lemma 4.3]{GKT2}, and that this condition implies all codegeneracy/inert pushouts are in $\Ex(X)$ follows from \Cref{pure_pushouts}, as these are among the concrete codegeneracy/coface pushouts which factor into these basic squares.
\end{ex}

\begin{ex}
If $\Ex(X)$ contains all of the squares of \Cref{typemixed}, it is \emph{split} in the sense of \cite[5.1]{GKT2}. 
$\Ex(X)$ containing the middle squares of \Cref{typemixed} means $X$ has \emph{indecomposable units} in the sense of \cite[5.5]{GKT2}, containing the left and right squares of \Cref{typemixed} makes $X$ stiff as described above, and any simplicial set $X$ is \emph{complete} in the sense of \cite[2.1]{GKT2} as degeneracy maps are always monomorphisms (unlike in the setting of simplicial spaces). 
Completeness in this sense also follows from \Cref{always_ex} following \cite[2.7]{GKT2} relating the condition to sending certain pushouts of degeneracies to pullbacks, which is always true in the discrete setting. 
By \cite[Proposition 5.9]{GKT2}, $X$ is split precisely when it is stiff, complete, and has indecomposable units, hence exactly when $\Ex(X)$ contains the squares of \Cref{typemixed}. By \Cref{pure_pushouts}, split simplicial sets can equivalently be defined as those which send to pullbacks all pushouts in $\Delta$ of one coface and one codegeneracy map.
\end{ex}


\section{Lifting conditions and acyclic configurations}

We now give an equivalent description of completeness properties via lifting conditions, and explore additional lifting properties of simplicial sets complete with respect to either pushouts or balanced squares of face maps. 


Like many conditions considered for simplicial sets, such as the (inner) horn filling conditions defining Kan complexes (resp.~quasicategories), completeness with respect to a class of squares can be expressed in terms of lifting properties. For a square in $\Delta$ as below left and $X$ a simplicial set, the square below center is a weak (resp.~strong) pullback if and only if $X$ has (unique) lifts against the map from the pushout $\Delta^p {}_{f}\!\sqcup_{g} \Delta^q$ of simplicial sets to $\Delta^n$ induced by $h,k$. That is, any map from this pushout into $X$ extends (uniquely) to an $n$-simplex as in the lifting diagram below right.

\[
\begin{tikzcd}
	{[m]} \ar{r}{f} \ar{d}[swap]{g}  & {[p]} \ar{d}[swap]{h} \\
	{[q]} \ar{r}{k} & {[n]}
\end{tikzcd}\qquad\begin{tikzcd}
	X_n \arrow{r}{X_h} \arrow{d}[swap]{X_k} & X_p \arrow{d}[swap]{X_f} \\
	X_q \arrow{r}{X_g} & X_m
\end{tikzcd}\qquad\begin{tikzcd}
	\Delta^p \; {}_{f}\!\sqcup_{g} \Delta^q \arrow{r} \arrow{d}{k}[swap]{h} & X \\
	\Delta^n \arrow[dashed]{ur}
\end{tikzcd}
\]

This lets us interpret completeness properties geometrically, and we now describe several different completeness properties which admit simple geometric descriptions in terms of these filler conditions.

\begin{defn}
	\label{def:span_complete}
	A simplicial set $X$ is \emph{span complete} if $\Comp(X)$ includes all balanced squares of coface maps.
\end{defn}

We say a \emph{span (of cofaces)} in $X$ is a pair of simplices which share a face, which is equivalent to a span of coface maps in the category of elements for $X$. In a span complete simplicial set, any span in $X$ extends as above to a ``filler'' simplex between all of the vertices of the span.

By \Cref{complete_characterization}, 
$X$ is span complete if and only if $\Comp(X)$ contains the basic coface squares, or equivalently any \emph{basic span} consisting of a pair of $(n-1)$-simplices in $X$ which share an $(n-2)$-simplex face can be filled to an $n$-simplex in $X$ (for all $n \ge 2$). That is, any $(n-1)$-simplices $x,y$ in $X$ with $d_i x = d_{j-1} y$, for $i < j$, extends to an $n$-simplex $z$ with $d_j z = x$ and $d_i z = y$. The following figures illustrate this for $n = 3$, with $i = 2$, $j = 3$ and $i = 0$, $j = 2$, respectively.
\[\begin{tikzcd}
& 1 \arrow[shift left=1]{rr} & & 3 \\
0 \arrow{ur} \arrow{rr} \arrow{urrr} & & 2 \arrow[from=1-2, crossing over]
\end{tikzcd}\qquad\qquad\qquad\begin{tikzcd}
& 1 \arrow[shift left=1]{rr} & & 3 \\
0 \arrow{ur} \arrow{urrr} & & 2 \arrow{ur} \arrow[from=1-2, crossing over]
\end{tikzcd}\]

However, asking for all basic span fillers rules out all nerves of categories but groupoids, as fillers against the basic span inclusions $\Delta^1 {}_{d^0}\!\sqcup_{d^0} \Delta^1 \hookrightarrow \Delta^2$ and $\Delta^1 {}_{d^1}\!\sqcup_{d^1} \Delta^1 \hookrightarrow \Delta_2$ require that for any morphism $a$, the diagrams below complete to commuting triangles, hence $a$ must have both left and right inverses.
\[\begin{tikzcd}
& \LargerCdot \\
\LargerCdot \arrow{ur}{a} \arrow[equals]{rr} & & \LargerCdot
\end{tikzcd}\qquad\qquad\qquad\qquad\begin{tikzcd}
& \LargerCdot \arrow{dr}{a}\\
\LargerCdot \arrow[equals]{rr} & & \LargerCdot
\end{tikzcd}\]

A generalization of span complete simplicial sets which allows for any nerve of a category (as well as quasicategories, see \Cref{qcat_innerspans}) restricts the desired fillers to \emph{inner spans} analogous to the restriction of horns to inner horns when generalizing Kan complexes to quasicategories. Where the general spans above contain all vertices of the desired $n$-simplex, an inner span further contains all of the spinal edges, such as in the right span within the 3-simplex pictured above. In the nerve of a category then, this means that all of the morphisms in an $n$-simplex are provided by the span, and filling it to an $n$-simplex amounts to simply adding in the missing composites.  By \Cref{face_pushout_concrete}, these spans are precisely those arising from a pushout square of coface maps in $\Delta$, which motivates the following definition.

\begin{defn}
	\label{def:inner_span_complete}
	A simplicial set $X$ is \emph{inner span complete} if $\Comp(X)$ includes all pushout squares of coface maps.
\end{defn}

By \Cref{complete_characterization}, 
$X$ is inner span complete if and only if $\Comp(X)$ contains the basic pushout squares of face maps in \Cref{type2face}. Equivalently, $X$ is inner span complete precisely when any \emph{basic inner span} consisting of a pair of $(n-1)$-simplices in $X$ which share an $(n-2)$-simplex face and together include a string of $n$ successive edges can be filled to an $n$-simplex in $X$ (for all $n \ge 2$).
Such a span contains all but one edge of the desired $n$-simplex, and the innerness condition requires that this edge is not in the spine. 

Inner span complete simplicial sets describe settings in which edges can be composed in a manner respected by higher simplices but without requiring the uniqueness or coherence properties of categories and quasicategories, respectively. In \cite{weakly_cartesian} we show that this precisely describes the compositional structure possessed by the bar construction of algebras of a broad class of monads.
This generalizes the composition of \emph{partial evaluations}~\cite{FP} to higher simplices. By definition, a partial evaluation is an edge in the bar construction of an algebra of a monad. We have found that the compositional structure of partial evaluations established in~\cite{FP} for a large class of monads extends to the higher-dimensional simplices: the bar construction is an inner span complete simplicial set.
Potential applications to algebraic rewriting theory remain to be explored.

%
%
%
%
%
%

\subsection*{Combinatorial acyclicity}

In this subsection, we prove the existence of certain additional fillers in a span complete simplicial set $X$, obtained by iterating the filling condition for basic spans.

For the moment we will work in the undirected context, considering \emph{(abstract) simplicial complexes}, in the standard sense of collection of subsets of a finite ground set which are downward closed, and such that the union of all these subsets is the ground set.
We will treat nonempty simplicial complexes as subsimplicial sets of the representable simplex on the ground set, where for now the order of the vertices will not matter (although it is specified). In the directed context that follows afterwards, we discuss how to modify these definitions to account for directed edges.

A vertex in a simplicial complex is \emph{extremal} if it is contained in only one maximal simplex. A \emph{combinatorial sphere} is a simplicial complex with at least 3 vertices, containing precisely the proper subsets of the ground set. One can visualise it geometrically as a hollow triangle, or a hollow tetrahedron, or in general the boundary of a simplex.

The following definitions and the characterization of \Cref{undirected_acyclic} are well-known, but they do not seem to be easy to find in the literature in this exact form. Much of the related literature is in the area of relational database theory, where often more general hypergraphs rather than simplicial complexes are considered\footnote{See e.g.~\cite{acyclic_desirability}, or~\cite[Chapter~13]{maier} for a textbook account.}, resulting in greater generality and complexity than what we need here.

\begin{defn}[{\cite{graham}}]
	A simplicial complex $S$ is \emph{Graham acyclic} if it satisfies the following recursive definition: $S$ is empty, or $S$ contains an extremal vertex $v$ and $S \setminus \{v\}$ is Graham acyclic. 
\end{defn}

Here, $S \setminus \{v\} \coloneqq \{A\setminus\{v\} \mid A \in S\}$ denotes the new simplicial complex obtained by removing $v$ from all simplices as well as from the ground set. Applying this recursive elimination of vertices to a simplicial complex is called \emph{Graham reduction}. For a Graham acyclic simplicial complex, the reduction results in the empty complex\footnote{It is known that Graham reduction can be performed in any order, i.e.~it is impossible to get stuck.}, while otherwise the process terminates at a non-empty complex.

Note that there are similarities with the notions of collapsibility and shellability in combinatorial topology. In particular, Graham acyclicity is by definition equivalent to Wegner's \emph{1-collapsibility}~\cite{wegner}. Also the following notion is standard, see e.g.~\cite[Definition~5.3.15]{west}.

%

\begin{defn}
	A \emph{chordal graph} is an undirected graph in which all cycles with at least $4$ edges have a chord, i.e.~an edge which connects two vertices non-adjacent in the cycle.
\end{defn}

Applying this definition repeatedly shows that in a chordal graph, a cycle of any length can be triangulated, which is why chordal graphs are also sometimes called \emph{triangulated graphs}.

The following characterization theorem is well-known in its hypergraph version in the literature on acyclic database schemes, see e.g.~\cite{acyclic_desirability} or~\cite[Theorem~13.2]{maier}, while the proof is somewhat simpler in our setting of simplicial complexes. The condition~\ref{graham} is easy to check algorithmically, while condition~\ref{fillers} is useful for mathematical proofs. Condition \ref{rip} is the one which will facilitate our reduction to inner span fillers in the directed case below, and is generally useful when working with algebraic or combinatorial structures on simplicial complexes, such as the tables in a relational database.

\begin{thm}
	\label{undirected_acyclic}
	The following are equivalent for a simplicial complex $S$:
	\begin{enumerate}
		\item\label{graham} $S$ is Graham acyclic.
		\item\label{fillers} Every combinatorial sphere in $S$ has a filler, and the $1$-skeleton of $S$ is a chordal graph.
		\item\label{rip} $S$ has the \emph{running intersection property}: the maximal simplices of $S$ can be ordered as $T_1, \ldots, T_m$ such that for every $k = 1,\ldots,m$ there is $j < k$ with
			\[
				T_k \cap \left( \bigcup_{i=1}^{k-1} T_i \right) \: \subseteq \: T_j.
			\]
	\end{enumerate}
	Moreover if $S$ is connected, then the $T_1,\ldots,T_m$ in \ref{rip} can be chosen such that $T_k \cap \bigcup_{i=1}^{k-1} T_i$ is nonempty for all $k = 2,\ldots,m$.
\end{thm}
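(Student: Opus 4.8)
The plan is to prove the equivalence of the three conditions by establishing a cycle of implications \ref{graham} $\Rightarrow$ \ref{fillers} $\Rightarrow$ \ref{rip} $\Rightarrow$ \ref{graham}, together with the connectivity refinement at the end. The induction on the number of vertices (for \ref{graham}) or maximal simplices (for \ref{rip}) will be the unifying thread throughout.

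For \ref{graham} $\Rightarrow$ \ref{fillers}: I would argue by induction on the number of vertices. The empty complex satisfies \ref{fillers} vacuously. Otherwise, let $v$ be an extremal vertex contained in the unique maximal simplex $T$, so that $S \setminus \{v\}$ is Graham acyclic and by induction has all combinatorial spheres filled and a chordal $1$-skeleton. To see $S$ has all combinatorial spheres filled: a combinatorial sphere in $S$ either avoids $v$ entirely (handled by induction) or contains $v$; in the latter case, since $v$ lies in only the simplex $T$, every simplex of the sphere through $v$ is a subset of $T$, forcing the sphere's whole ground set into $T$, so the filler is a face of $T \in S$. For chordality: any cycle in the $1$-skeleton of $S$ of length $\ge 4$ through $v$ has its two neighbors of $v$ both lying in $T$ (again since $v$'s only maximal simplex is $T$), and $T$ being a simplex means the edge between these two neighbors is present, giving a chord. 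Cycles avoiding $v$ are handled by the inductive chordality of $S \setminus \{v\}$.

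For \ref{fillers} $\Rightarrow$ \ref{rip}: this is the subtlest step and I expect it to be the main obstacle. The strategy is to find a vertex or simplex to peel off while preserving the hypotheses. Concretely, I would show that \ref{fillers} forces the existence of an extremal vertex (which then lets one extract a running-intersection ordering by a separate induction, or directly: once extremal vertices always exist, one essentially recovers Graham acyclicity and then builds the $T_i$ ordering). The existence of an extremal vertex from chordality plus sphere-filling is the classical "every chordal graph has a simplicial vertex" argument, adapted to the complex: a simplicial vertex of the $1$-skeleton has its neighborhood forming a clique, and one must check — using the sphere-filling hypothesis to rule out "hollow" cliques — that this neighborhood together with $v$ actually spans a simplex of $S$, making $v$ extremal. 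The delicate point is precisely this promotion from "clique in the $1$-skeleton" to "simplex in $S$", which is exactly where the combinatorial-sphere fillers are needed (an unfilled sphere would give a clique that is not a simplex). Once extremal vertices are guaranteed at every stage, an induction produces the ordering $T_1, \ldots, T_m$: peel the extremal vertex, apply the inductive ordering to the smaller complex, and either insert the maximal simplex containing $v$ at the appropriate spot or recognize it was already listed.

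For \ref{rip} $\Rightarrow$ \ref{graham}: given the ordering $T_1, \ldots, T_m$, I would induct on $m$. If $m = 1$ the complex is a single simplex, which is Graham acyclic (eliminate vertices one at a time). For $m > 1$, let $j < m$ witness $T_m \cap \bigcup_{i<m} T_i \subseteq T_j$. Any vertex $v \in T_m \setminus \bigcup_{i < m} T_i$ is extremal (it lies only in $T_m$); eliminating all such vertices reduces $T_m$ to a subset of $T_j$, after which $T_m$ is no longer maximal and we are left with the complex on $T_1, \ldots, T_{m-1}$, which is Graham acyclic by the inductive hypothesis since the running intersection ordering restricts. One must check $T_m$ genuinely contains a vertex outside the union (otherwise $T_m \subseteq T_j$ contradicts maximality unless $T_m = T_j$, a degenerate case to dispatch separately).

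Finally, for the connectivity refinement: if $S$ is connected, I would observe that in the \ref{rip} $\Rightarrow$ ordering one can always reorder the maximal simplices so that each $T_k$ meets the union of its predecessors — this is essentially running a breadth- or depth-first search on the "intersection graph" of maximal simplices (which is connected precisely because $S$ is), choosing $T_1$ arbitrarily and then always appending a maximal simplex adjacent to those already chosen. The running intersection condition is preserved under such a reordering because the chordality/sphere hypotheses are order-independent, or more directly because one can re-derive the ordering from the extremal-vertex peeling performed in a connectivity-respecting order. The routine verification that this reordering still satisfies the containment $T_k \cap \bigcup_{i<k} T_i \subseteq T_j$ for some $j < k$ can be folded into the same induction.
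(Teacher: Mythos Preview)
Your cycle of implications \ref{graham}$\Rightarrow$\ref{fillers}$\Rightarrow$\ref{rip}$\Rightarrow$\ref{graham} is correct and matches the paper's argument closely; the paper organizes it as two separate equivalences \ref{graham}$\Leftrightarrow$\ref{fillers} and \ref{graham}$\Leftrightarrow$\ref{rip}, but the content is the same (your \ref{fillers}$\Rightarrow$\ref{rip} explicitly passes through \ref{graham} anyway, via the simplicial-vertex/perfect-elimination-ordering argument for chordal graphs combined with sphere-filling to promote cliques to simplices).

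There is, however, a genuine gap in your connectivity refinement. Your primary proposal is to take any running-intersection ordering and reorder it by BFS/DFS on the intersection graph of maximal simplices, claiming that the running intersection property survives such a reordering. It does not. Take $S$ with maximal simplices $\{1,2,3\}$, $\{2,3,4\}$, $\{3,4,5\}$: this is Graham acyclic and connected, and all three maximal simplices pairwise intersect, so a BFS starting at $\{1,2,3\}$ may visit $\{3,4,5\}$ before $\{2,3,4\}$. In the resulting order $T_1=\{1,2,3\}$, $T_2=\{3,4,5\}$, $T_3=\{2,3,4\}$ one has $T_3\cap(T_1\cup T_2)=\{2,3,4\}$, which is contained in neither $T_1$ nor $T_2$. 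Your justification that ``the chordality/sphere hypotheses are order-independent'' only guarantees that \emph{some} ordering satisfies \ref{rip}, not that the particular BFS ordering does.

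Your fallback---re-deriving the ordering from the extremal-vertex peeling, done in a connectivity-respecting way---is the correct fix, and is exactly what the paper does: it builds the ordering inductively in the proof of \ref{graham}$\Rightarrow$\ref{rip} and checks, by inspection of the two cases (whether $T\setminus\{v\}$ remains maximal or not), that when $S$ is connected the constructed ordering has $T_k\cap\bigcup_{i<k}T_i\neq\varnothing$ at every step. You should drop the BFS reordering entirely and promote this fallback to the actual argument.
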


We include a proof for convenience.

\begin{proof}
$\ref{graham}\Rightarrow \ref{fillers}$: We use induction on the number of vertices of $S$, with the statement being trivial if $S$ is empty. For the induction step, suppose that $S$ is Graham acyclic with extremal vertex $v$.

Now consider a combinatorial sphere in $S$. If this sphere does not contain $v$, then it has a filler by the induction assumption applied to $S \setminus \{v\}$. If this sphere contains $v$, then it must also have a filler, since otherwise $v$ would be contained in more than one maximal simplex.

Similarly, consider a cycle of length $\ge 4$ in the $1$-skeleton of $S$. If $v$ is not part of this cycle, then it again has a chord by the induction assumption, so suppose that $v$ is a vertex in the cycle. Then both neighboring vertices of $v$ in the cycle are also members of the unique maximal simplex containing $v$, and therefore so is the edge between these vertices, resulting in a chord.

$\ref{fillers}\Rightarrow \ref{graham}$: A vertex $v$ in a graph $G$ is called \emph{simplicial} if every two neighboring vertices of $v$ are themselves adjacent. A standard graph-theoretic result is that a graph is chordal if and only if there is an ordering $\{v_1, \ldots, v_n\}$ of its vertices such that each $v_i$ is simplicial in the subgraph induced by the vertices $\{v_1,\ldots,v_i\}$~\cite[Theorem~5.3.17]{west}. In the case of a simplicial complex whose 1-skeleton is a chordal graph, as long as there are no unfilled combinatorial spheres, such an ordering can be reversed to provide an ordering for the Graham reduction process. This is because every complete subgraph of the 1-skeleton has to be a simplex in $S$ by assumption, in particular making $v_n$ extremal in $S$.

$\ref{rip}\Rightarrow \ref{graham}$: If the running intersection property holds, then putting $k=m$ shows that there is a $j<m$ such that $T_m \cap \left( \bigcup_{i=1}^{m-1} T_i \right) \: \subseteq \: T_j$. This implies that there is some vertex $v\in T_m$ which does not belong to any of the other maximal simplices from $1$ to $m-1$, making $v$ extremal. Considering the reduced complex $S \setminus \{v\}$, there are now two possibilities: it may be that $S\setminus\{v\}$ has maximal simplices $T_1,\ldots T_{m-1}$ as maximal simplices, in which case the running intersection property still holds trivially; or $S \setminus \{v\}$ may in addition have the maximal simplex $T_m\setminus \{v\}$, in which case the running intersection property still holds with $T_m$ replaced by $T_m\setminus\{v\}$ in the new ordering. In either case, the induction assumption finishes the argument, again with the empty simplicial complex as the base case.

$\ref{graham}\Rightarrow \ref{rip}$: We once more use induction on the number of vertices, where the empty base case is obvious.
For the induction step, suppose that $v$ is an extremal vertex in $S$, belonging to a unique maximal simplex $T$. Since $S \setminus \{v\}$ is still Graham acyclic, the induction hypothesis shows that there exists an ordering $T_1,\ldots T_{m-1}$ of the maximal simplices of $S \setminus \{v\}$ which satisfies the running intersection property.

Then we again have two cases. First, if $T \setminus \{v\}$ is still maximal in $S \setminus \{v\}$, then it must coincide with some $T_k$. Then $T_1, \ldots, T_k \cup \{v\}, \ldots, T_m$ is an ordering of the maximal simplices of $S$ which witnesses the running intersection property. Second, if $T \setminus \{v\}$ is no longer maximal in $S \setminus \{v\}$, then it must be properly contained in some $T_j$. Then the sequence of maximal simplices
\[
	T_1, \ldots, T_m, T 
\]
witnesses the running intersection property for $S$, because of $T \cap \bigcup_{i=1}^m T_i = T \setminus \{v\} \subseteq T_j$.

Moreover, the final claim on connectedness follows by an inspection of the previous argument: if $S$ is connected, then so is $S \setminus \{v\}$, and it is straightforward to check that every $T_k \cap \bigcup_{i=1}^{k-1} T_i$ is nonempty provided that this holds likewise on $S \setminus \{v\}$, which it does by the induction assumption.
\end{proof}

\begin{remark}
	It should be noted that the acyclicity property characterized by \Cref{undirected_acyclic} is not homotopy invariant, and in particular distinct from notions of acyclicity familiar from algebraic topology. This applies similarly to our directed analogue below.
\end{remark}

\begin{defn}
	\label{acyclic_config}
	An \emph{acyclic configuration} inside the $n$-simplex is a connected simplicial complex $S$ with ground set $[n]$ which satisfies the conditions of~\Cref{undirected_acyclic}.
\end{defn}

\begin{ex}\label{span_acyclic}
	A span of simplices is an acyclic configuration inside their union. Indeed, any combinatorial sphere is the boundary of a face of one of the two simplices and hence has a filler. The 1-skeleton is the union of two complete graphs along another complete graph. In particular, every cycle has a chord: while this is obvious for a cycle contained in one of the complete subgraphs, a cycle not contained in either needs to have at least two vertices in the intersection, where again a chord exists.
\end{ex}

Acyclic configurations have the following relevance in our context.

\begin{thm}\label{acyclic_fillers}
	A simplicial set is span complete if and only if it has fillers for all acyclic configurations in the $n$-simplex (for every $n$).
\end{thm}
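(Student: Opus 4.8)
The plan is to prove both implications, with essentially all of the work in the forward direction. Throughout I will use the dictionary, recorded at the start of this section, between a simplicial set $X$ sending a square in $\Delta$ to a weak pullback and $X$ admitting (not necessarily unique) lifts against the induced map of simplicial sets out of the pushout $\Delta^p\sqcup_{\Delta^m}\Delta^q$ into $\Delta^n$. When the square is a \emph{balanced} square of coface maps, the maps are monos and the pushout of representables is the subcomplex $\Delta^p\cup\Delta^q\subseteq\Delta^n$, glued along the common face $\Delta^m$; by joint surjectivity this has ground set $[n]$, and since $[m]$ is nonempty it is connected, so by \Cref{span_acyclic} it is an acyclic configuration in $\Delta^n$. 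This already settles the ``if'' direction: if $X$ has fillers for every acyclic configuration, then in particular it fills every such union $\Delta^p\cup\Delta^q$, so $\Comp(X)$ contains every balanced square of coface maps and $X$ is span complete.

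For the ``only if'' direction, suppose $X$ is span complete and let $S$ be an acyclic configuration in $\Delta^n$. Using \Cref{undirected_acyclic} I enumerate the maximal simplices of $S$ as $T_1,\ldots,T_m$ witnessing the running intersection property, and---because $S$ is connected---arrange by the final clause of \Cref{undirected_acyclic} that $F_k \coloneqq T_k\cap\bigcup_{i<k}T_i$ is nonempty and contained in some $T_{j(k)}$ with $j(k)<k$, for all $k\ge 2$. I then argue by induction on $m$ that $X$ has fillers for $S$. The base case $m=1$ is immediate, as the unique maximal simplex is the whole ground set, so $S=\Delta^n$. For the inductive step, put $U\coloneqq\bigcup_{i=1}^{m-1}T_i$ and let $S'$ be the subcomplex generated by $T_1,\ldots,T_{m-1}$; then $S'$ has ground set $U$, it is connected (the truncated enumeration still meets each successive union nonemptily), and the truncated enumeration witnesses its running intersection property, so by \Cref{undirected_acyclic} again $S'$ is an acyclic configuration in the simplex on $U$, with $m-1$ maximal simplices. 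Given $\sigma\colon S\to X$, the inductive hypothesis extends $\sigma|_{S'}$ to a $U$-simplex $\tau\colon\Delta^U\to X$.

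It remains to glue $\tau$ and $\sigma|_{\Delta^{T_m}}$. A short computation with subsets of $[n]$ gives $\Delta^U\cap\Delta^{T_m}=\Delta^{F_m}$ (using $F_m\subseteq T_{j(m)}$) and $\Delta^U\cup\Delta^{T_m}=\Delta^n$, so the square with corners $\Delta^{F_m},\Delta^U,\Delta^{T_m},\Delta^n$ arises from a commuting square of coface maps in $\Delta$ which is jointly surjective and satisfies the cardinality identity $|U|+|T_m|=|F_m|+|[n]|$ automatically, hence is balanced by \Cref{lem:balanced_bicartesian}; here it matters that $F_m\neq\emptyset$ so that $[F_m]$ is an object of $\Delta$. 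Since $\Delta^{F_m}\subseteq S'\subseteq S$, both $\tau$ and $\sigma|_{\Delta^{T_m}}$ restrict to $\sigma|_{\Delta^{F_m}}$ on $\Delta^{F_m}$, so they assemble to a map $\Delta^U\cup\Delta^{T_m}\to X$; span completeness extends this along $\Delta^U\cup\Delta^{T_m}\hookrightarrow\Delta^n$ to an $n$-simplex $a\colon\Delta^n\to X$. Because $S=S'\cup\Delta^{T_m}\subseteq\Delta^U\cup\Delta^{T_m}$ and $a$ restricts to $\tau\supseteq\sigma|_{S'}$ and to $\sigma|_{\Delta^{T_m}}$, we get $a|_S=\sigma$, completing the induction. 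The main obstacle, and the place where care is needed, is precisely the bookkeeping in the inductive step: checking that passing to $S'$ preserves being an acyclic configuration (ground set, connectedness, running intersection) and that the gluing square is a genuine balanced square of coface maps with nonempty shared face---this is exactly where the connected refinement in \Cref{undirected_acyclic} and the characterization \Cref{lem:balanced_bicartesian} are used.
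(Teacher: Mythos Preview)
Your proof is correct and follows essentially the same route as the paper: the ``if'' direction via \Cref{span_acyclic}, and the ``only if'' direction by induction along the running intersection ordering, gluing the current maximal simplex to the previously obtained filler using span completeness applied to the resulting balanced square. The only cosmetic difference is that the paper does induction on the index $k$ in a fixed enumeration (showing directly that the simplex on $\bigcup_{i\le k} T_i$ has a filler), whereas you induct on the number $m$ of maximal simplices and verify that $S'$ is again an acyclic configuration before invoking the hypothesis; this extra verification is correct but not strictly needed if you phrase the induction as the paper does. One small notational wrinkle: when you write ``$\Delta^U\cup\Delta^{T_m}=\Delta^n$'' you mean the equality $U\cup T_m=[n]$ of vertex sets (joint surjectivity), not of simplicial subsets, since you correctly use $\Delta^U\cup\Delta^{T_m}\hookrightarrow\Delta^n$ as a proper inclusion a few lines later.
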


\begin{proof}
	The ``if'' direction follows from the acyclicity of spans in the example above. 
	For the ``only if'' direction, we use condition~\ref{rip} of \Cref{undirected_acyclic} characterizing acyclic configurations. Let $X$ be span complete and consider a map to $X$ from acyclic $S \subseteq \Delta^n$ with maximal simplices $\{T_1,T_2,...\}$. We show by induction on $k$ that every induced subcomplex on vertices $\bigcup_{i=1}^k T_i$ has a filler. There is nothing to prove in the base case $k = 1$, so assume $k > 1$. Then the induced subcomplex on vertices $\bigcup_{i=1}^{k-1} T_i$ has a simplex filler by the induction assumption. But now with the inclusion maps as morphisms, the diagram\footnote{Note that the assumption of connectedness guarantees that the set-theoretic intersection $T_k \cap \bigcup_{i=1}^{k-1}$ is nonempty.}
	\[
		\begin{tikzcd}
			T_k \cap \bigcup_{i=1}^{k-1} T_i \arrow{r} \arrow{d}	& T_k \arrow{d}		\\
			\bigcup_{i=1}^{k-1} T_i \arrow{r}			& \bigcup_{i=1}^k T_i
		\end{tikzcd}
	\]
	is a square of coface maps in $\Delta$ and evidently a pushout of finite sets.  Since $T_k \cap \bigcup_{i=1}^{k-1} T_i \subseteq T_j$ for some $j$, we know that the filler of $\bigcup_{i=1}^{k-1} T_i$ agrees with $T_k$ on the face with vertices those of $T_k \cap \bigcup_{i=1}^{k-1} T_i$, as both must agree with the corresponding face of $T_j$. These simplices therefore form a span and have a filler in $X$ which extends the restriction of the original map to $\bigcup_{i=1}^{k} T_i$, completing the induction step.
\end{proof}

\subsection*{Directed acyclicity}

We now describe corresponding collections of fillers for inner span complete simplicial sets, obtained by iterating the filler condition for inner spans. To this end, we propose a version of the above acyclicity notion in a directed setting which accounts for the orientations of edges and triangles, respectively. A \emph{directed simplicial complex} $S$ is a downward closed collection of subsets of a finite nonempty totally ordered set, which without loss of generality we take to be given by
\[
	{[n]} = \{0, \ldots, n\} = \bigcup S,
\]
thereby identifying a directed simplicial complex on $n$ vertices with a simplicial subcomplex of the $n$-simplex. As before we write $S \setminus \{v\} = \{ A \setminus \{v\} \mid A \in S \}$, where now this reduced directed simplicial set lives on ${[n-1]}$, so that the indices of all vertices beyond $v$ must be reduced by $1$.

All notions for which we do not introduced directed or 2-directed versions, such as extremality of a vertex, are used as in the undirected setting above.

\begin{defn}\label{directedgrahamacyclic}
	A directed simplicial complex $S \subseteq 2^{[n]}$ is \emph{directed Graham acyclic} if $n = 0$, or if $S$ has an extremal vertex $v \in {[n]}$ such that:
	\begin{enumerate}
		\item If $v > 0$, then $\{v-1,v\} \in S$.
		\item If $v < n$, then $\{v,v+1\} \in S$.
		\item $S \setminus \{v\}$ is again directed Graham acyclic.
	\end{enumerate}
\end{defn}

We then have a characterization analogous to that of \Cref{undirected_acyclic}.

\begin{thm}\label{directed_acyclic}
	The following are equivalent for a directed simplicial complex $S \subseteq 2^{\{0,\ldots,n\}}$ with $\bigcup S = \{0,\ldots,n\}$:
	\begin{enumerate}
		\item\label{directed_graham} $S$ is directed Graham acyclic.
		\item\label{directed_fillers} Every combinatorial sphere in $S$ has a filler, and the $1$-skeleton of $S$ is a chordal graph which contains the entire spine.
		\item\label{directed_rip} $S$ has the \emph{directed running intersection property}: the maximal simplices of $S$ can be ordered as $T_1, \ldots, T_m$ such that for every $k = 1,\ldots,m$ there is $j < k$ with
			\[
				\left( \bigcup_{i=1}^{k-1} T_i \right) \cap T_k \: \subseteq \: T_j,
			\]
			and for every two vertices $v < w$ which are consecutive in $\bigcup_{i=1}^k T_i$, we have $\{v,w\} \subseteq T_k$ or $\{v,w\} \subseteq \bigcup_{i=1}^{k-1} T_i$.
	\end{enumerate}
\end{thm}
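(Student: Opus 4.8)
The plan is to transcribe the proof of \Cref{undirected_acyclic} almost verbatim, carrying along the extra ``spine'' data that the directed setting demands. I would prove $\ref{directed_graham}\Rightarrow\ref{directed_fillers}\Rightarrow\ref{directed_graham}$ and $\ref{directed_graham}\Leftrightarrow\ref{directed_rip}$, each by induction on $n$ with the one-vertex complex as the trivial base. The governing observation is how vertex deletion interacts with the spine: deleting an extremal vertex $v$ re-indexes $\{v+1,\ldots,n\}$ downward, so the spine of $[n-1]$, read in the original labels, is $\{0,1\},\ldots,\{v-2,v-1\},\{v-1,v+1\},\{v+1,v+2\},\ldots,\{n-1,n\}$; that is, the two spine edges $\{v-1,v\}$ and $\{v,v+1\}$ of $[n]$ get merged into the single bridging edge $\{v-1,v+1\}$. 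Conditions (a)--(b) of \Cref{directedgrahamacyclic} are exactly what is needed to recover the spine of $[n]$ upon reinserting $v$.

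For $\ref{directed_graham}\Rightarrow\ref{directed_fillers}$, given the extremal vertex $v$ I apply the inductive hypothesis to $S\setminus\{v\}$ and then repeat the undirected arguments without change: a combinatorial sphere through $v$ must be filled, else $v$ lies in two maximal simplices; a cycle of length $\ge 4$ through $v$ gets a chord inside the unique maximal simplex containing $v$. For the spine, every spine edge of $[n]$ other than $\{v-1,v\}$ and $\{v,v+1\}$ is a spine edge of $[n-1]$, hence lies in $S\setminus\{v\}$ and so in $S$, while the remaining two are provided by (a)--(b). For $\ref{directed_fillers}\Rightarrow\ref{directed_graham}$, take a perfect elimination ordering of the chordal $1$-skeleton and let $v_n$ be its final, simplicial vertex; since every complete subgraph spans a combinatorial sphere which is filled by hypothesis, $v_n$ lies in a unique maximal simplex and is extremal, and conditions (a)--(b) hold because the $1$-skeleton contains the whole spine. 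It remains to see that $S\setminus\{v_n\}$ still satisfies \ref{directed_fillers}: chordality is preserved by deleting a simplicial vertex, filledness of spheres is inherited as in the undirected case, and the bridging edge $\{v_n-1,v_n+1\}$ lies in $S\setminus\{v_n\}$ because $v_n-1$ and $v_n+1$ are neighbours of the simplicial vertex $v_n$ hence adjacent, so the triangle on $\{v_n-1,v_n,v_n+1\}$ is a filled combinatorial sphere of $S$; iterating along the reversed elimination order then runs the directed Graham reduction to the empty complex.

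For $\ref{directed_rip}\Rightarrow\ref{directed_graham}$, setting $k=m$ in the directed running intersection property yields a vertex $v\in T_m$ lying in no other $T_i$, hence extremal; the spine clause at $k=m$, together with $v\notin\bigcup_{i<m}T_i$, forces $\{v-1,v\}$ and $\{v,v+1\}$ into $T_m$, giving (a)--(b); and deleting $v$ from $T_m$ --- discarding $T_m$ if it is thereby absorbed into another $T_i$ --- produces a directed RIP ordering on $S\setminus\{v\}$, so induction closes this case. For $\ref{directed_graham}\Rightarrow\ref{directed_rip}$, let $v$ be the extremal vertex lying in the unique maximal simplex $T$, fix by induction a directed RIP ordering $T_1,\ldots,T_{m-1}$ of $S\setminus\{v\}$, and split as in the undirected proof: if $T\setminus\{v\}$ is still maximal, replace the corresponding $T_k$ by $T$ in place; otherwise it is contained in some $T_j$ and we append $T$ at the end, with $(\bigcup_{i<m}T_i)\cap T = T\setminus\{v\}\subseteq T_j$. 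The only pairs of consecutive vertices newly created when $v$ re-enters $\bigcup_i T_i=[n]$ are $(v-1,v)$ and $(v,v+1)$, and both lie inside $T$ because $\{v-1,v\},\{v,v+1\}\in S$ and $v$ is contained only in $T$; every other consecutive pair matches, under the downward re-indexing, a consecutive pair for $S\setminus\{v\}$ and is handled by the inductive spine clause. This final bookkeeping --- keeping track of how ``consecutive in $\bigcup_{i\le k}T_i$'' behaves under the re-indexing that accompanies deleting $v$, and verifying the spine clause at every index in both directions --- is the only genuinely new ingredient over the undirected argument, and is where essentially all of the care lies; the rest is a direct transcription of the proof of \Cref{undirected_acyclic}.
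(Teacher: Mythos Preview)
Your proof is correct. You run the four implications by transcribing the undirected argument from \Cref{undirected_acyclic} and carrying the spine data through the vertex deletion; all the bookkeeping you flag (the bridging edge $\{v-1,v+1\}$ entering via simpliciality of the extremal vertex, and the match-up of consecutive pairs before and after re-indexing) is handled correctly, even if tersely.

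The paper takes a different, more modular route. Rather than reproving the undirected argument with added bookkeeping, it observes that each of \ref{directed_graham}, \ref{directed_fillers}, \ref{directed_rip} is its undirected counterpart plus one extra clause, invokes \Cref{undirected_acyclic} once, and then shows only that the three extra clauses are equivalent to each other in the presence of undirected acyclicity. In particular, for \ref{directed_fillers}$\Leftrightarrow$\ref{directed_rip} the paper proves directly that the spine clause in \ref{directed_rip} is equivalent to $S$ containing the whole spine, using a clean backwards induction on $k$ rather than tracking consecutive pairs through a Graham reduction as you do. What the paper's approach buys is brevity and a clear separation of the directed content from the undirected skeleton; what your approach buys is self-containment, and in fact your explicit argument that the bridging edge $\{v-1,v+1\}$ survives deletion of $v$ (via simpliciality of extremal vertices and sphere-filling) supplies a detail that the paper's very terse treatment of \ref{directed_fillers}$\Rightarrow$\ref{directed_graham} leaves to the reader.
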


Note that each one of these conditions implies its undirected counterpart given in \Cref{undirected_acyclic}.

\begin{proof}
	It is enough to show that the additional conditions relative to \Cref{undirected_acyclic} imply each other, assuming that the underlying undirected simplicial complex of $S$ is acyclic.

	Assuming \ref{directed_graham}, a simple induction argument indeed shows that $S$ contains the whole spine. For if $v \in {[n]}$ is as in \Cref{directedgrahamacyclic}, then $S \setminus \{v\}$ can be assumed to contains its entire spine by the induction assumption, and the extra condition on $v$ then implies that $S$ also contains the additional spinal edges not implied by those of $S \setminus \{v\}$. Conversely if \ref{directed_fillers} holds, then the conditions $\{v-1,v\} \in S$ for $v > 0$ and $\{v,v+1\} \in S$ for $v < n$ are part of the assumption that $S$ contains the entire spine.

	For the equivalence between \ref{directed_fillers} and \ref{directed_rip}, it is now enough to prove that the extra condition in \ref{directed_rip} is equivalent to $S$ containing the entire spine, provided that the undirected acyclicity of \Cref{undirected_acyclic} holds. Thus if \ref{directed_rip} holds, we now argue that $\{v,v+1\} \in S$ for every $v < n$. To this end, consider the smallest $k$ with $\{v,v+1\} \subseteq \bigcup_{i=1}^k T_i$. Then the assumption implies the desired $\{v,v+1\} \subseteq T_k$, since $\{v,v+1\} \subseteq \bigcup_{i=1}^{k-1} T_i$ would contradict the minimality of $k$.

	In the other direction, suppose that $S$ satisfies the undirected running intersection property and contains the entire spine. Let $v < w$ be two vertices consecutive in $\bigcup_{i=1}^k T_i$. We will use backwards induction on $k$ to prove the desired property $\{v,w\} \subseteq T_k$ or $\{v,w\} \subseteq \bigcup_{i=1}^{k-1} T_i$, or equivalently that the induced subcomplex on $\bigcup_{i=1}^k T_i$ contains its entire spine. This is clear in the base case $k = m$: for then we must have $w = v + 1$, so that the containing the entire spine assumption applies.

	For $k < m$, suppose first that $v$ and $w$ are still consecutive in $\bigcup_{i=1}^{k+1} T_i$. Since the induced subcomplex on $\bigcup_{i=1}^{k+1} T_i$ contains the entire spine by the induction assumption, we must have $\{v,w\} \subseteq T_h$ for some $h \le k + 1$. For $h \le k$ we are done, so assume $h = k + 1$. Then the running intersection property implies that there is $j \le k$ with $T_{k+1} \cap \bigcup_{i=1}^{k} T_i \subseteq T_j$. We therefore also conclude that $\{v,w\} \subseteq T_j$, which is enough.

	Finally if $v$ and $w$ are no longer consecutive in $\bigcup_{i=1}^{k+1} T_i$, then there are nonzero many elements $u_1,\ldots,u_\ell \in T_{k+1} \setminus \bigcup_{i=1}^k T_i$ such that the sequence
	\[
		v, u_1, \ldots, u_\ell, w
	\]
	consists of consecutive vertices in $\bigcup_{i=1}^{k+1} T_i$. The induction assumption together with $u_1,...,u_\ell \not\in \bigcup_{i=1}^k T_i$ then gives us that the edge formed by any two consecutive vertices in this list is in $T_{k+1}$. But then also $\{v,u_1,\ldots,u_\ell,w\} \subseteq T_{k+1}$, and in particular $\{v,w\} \subseteq T_{k+1} \cap \bigcup_{i=1}^k T_i$. But then again the running intersection property implies $\{v,w\} \subseteq T_j$ for some $j \le k$, as was to be shown.
\end{proof}

\begin{defn}
	\label{directed_acyclic_config}
	A \emph{directed acyclic configuration} inside the $n$-simplex is a directed simplicial complex $S$ on $[n]$ satisfying the conditions of~\Cref{directed_acyclic}.
\end{defn}

Note that the connectivity requirement which we had made in the undirected case (\Cref{acyclic_config}) is now automatic by inclusion of the spinal edges.

\begin{ex}\label{inner_span_acyclic}
	All inner spans define directed acyclic configurations, as follows. Suppose that the diagram below is a pushout of coface maps in $\Delta$.
	\begin{center}
	\begin{tikzcd}
		{[m]} \arrow{r}{f} \arrow{d}[swap]{g} & {[p]} \arrow{d}[swap]{h} \\
		{[q]} \arrow{r}{k} & {[n]} 
	\end{tikzcd}
	\end{center}
	Then consider the directed simplicial complex on ${[n]}$ given by
	\[
		S := \{ A \subseteq {[n]} \mid A \subseteq \im(h) \:\lor\: A \subseteq \im(k) \}.
	\]
	This $S$ is a directed acyclic configuration: the underlying undirected complex of $S$ is a union of two simplices glued along a common face, and is therefore (undirected) acyclic by \Cref{span_acyclic}. Since it moreover contains the entire spine by \Cref{face_pushout_concrete}, directed acyclicity follows.
	
	In particular, all basic inner span inclusions define directed acyclic configurations: for $0 \le i < j - 1\le n - 1$, the directed simplicial complex
	\[
		S := \{ A \subseteq {[n]} \mid i \not\in A \:\lor\: j \not\in A \}.
	\]
	is directed acyclic. Using the same $S$ with $i = j - 1$ would not work, since then the spine condition would be violated due to the spinal edge $\{j-1, j\}$ not being a member of $S$.
\end{ex}

The relevance of directed acyclicity in our context is the following general result.

\begin{thm}\label{acyclicfiller}
	A simplicial set is inner span complete if and only if it has fillers for all directed acyclic configurations in the $n$-simplex (for every $n$).
\end{thm}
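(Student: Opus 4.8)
The plan is to run the directed analogue of the proof of \Cref{acyclic_fillers}, replacing "pushout of finite sets" / "balanced square" throughout by "pushout of coface maps in $\Delta$", and "running intersection property" by its directed refinement. For the ``if'' direction there is nothing to do: by \Cref{inner_span_acyclic} every basic inner span inclusion arises from a directed acyclic configuration, so a simplicial set with fillers for all directed acyclic configurations in the $n$-simplex has in particular all basic inner span fillers, hence is inner span complete by \Cref{complete_characterization}.

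For the ``only if'' direction, let $X$ be inner span complete and fix a map $S \to X$ out of a directed acyclic configuration $S \subseteq \Delta^n$. Using the directed running intersection property (condition~\ref{directed_rip} of \Cref{directed_acyclic}), order the maximal simplices of $S$ as $T_1,\ldots,T_m$ so that for each $k$ there is $j < k$ with $\bigcup_{i<k} T_i \cap T_k \subseteq T_j$, and so that any two vertices consecutive in $\bigcup_{i\le k} T_i$ lie together in $T_k$ or in $\bigcup_{i<k} T_i$. I will show by induction on $k$ that the restriction of the given map to the subcomplex consisting of the faces of $T_1,\ldots,T_k$ extends to a simplex on the vertex set $\bigcup_{i\le k} T_i$ (more precisely, to a simplex restricting correctly on each of $T_1,\dots,T_k$); taking $k = m$, this subcomplex is all of $S$ and the vertex set is $[n]$, which gives the desired filler $\Delta^n \to X$. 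The base case $k = 1$ is trivial.

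For the inductive step, consider the commuting square of inclusions of subsets of $[n]$
\[
\begin{tikzcd}
	T_k \cap \bigcup_{i=1}^{k-1} T_i \arrow{r} \arrow{d} & T_k \arrow{d} \\
	\bigcup_{i=1}^{k-1} T_i \arrow{r} & \bigcup_{i=1}^k T_i
\end{tikzcd}
\]
which is a square of coface maps in $\Delta$ and evidently a pushout of finite sets. The extra clause of the directed running intersection property says exactly that every spinal edge of $\bigcup_{i\le k} T_i$ lies in the image of $T_k \hookrightarrow \bigcup_{i\le k} T_i$ or of $\bigcup_{i<k} T_i \hookrightarrow \bigcup_{i\le k} T_i$, i.e.\ the spine condition of \Cref{face_pushout_concrete} holds, so this is a pushout of coface maps in $\Delta$; since $X$ is inner span complete, $\Comp(X)$ contains it and $X$ sends it to a weak pullback. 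By the induction hypothesis there is a simplex $\sigma$ on $\bigcup_{i<k} T_i$ restricting correctly on each of $T_1,\ldots,T_{k-1}$, and since $T_k \cap \bigcup_{i<k} T_i \subseteq T_j$ with $j < k$, this $\sigma$ and the image of the simplex $T_k$ agree on the face spanned by $T_k \cap \bigcup_{i<k} T_i$ (both restrict to the image of the corresponding face of $T_j$). They thus form a commuting cone over the cospan of the square, and the weak pullback property yields a simplex on $\bigcup_{i\le k} T_i$ restricting to $\sigma$ and to the image of $T_k$, hence restricting correctly on each of $T_1,\ldots,T_k$; this completes the induction.

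The only genuinely new point compared with the undirected argument of \Cref{acyclic_fillers}, and the step I expect to require the most care to phrase correctly, is the verification that the displayed square is a pushout in $\Delta$ rather than merely in $\Set$: this is precisely the spine condition of \Cref{face_pushout_concrete}, and it is supplied, essentially verbatim, by the additional spinal clause built into the directed running intersection property of \Cref{directed_acyclic}. Everything else is the same bookkeeping as in the undirected case.
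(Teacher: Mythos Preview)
Your proof is correct and follows exactly the approach of the paper: the ``if'' direction via \Cref{inner_span_acyclic}, and the ``only if'' direction by rerunning the induction from \Cref{acyclic_fillers} with the directed running intersection property, the key new point being that its extra spinal clause is precisely what \Cref{face_pushout_concrete} needs to make the displayed square a pushout in $\Delta$. You have simply spelled out in full what the paper compresses into a two-sentence sketch.
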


\begin{proof} 
	The proof is a straightforward modification of the proof of \Cref{acyclic_fillers}, as inner spans are directed acyclic as shown above, and in the induction argument, if $S$ is assumed directed acyclic then the condition on consecutive vertices in \ref{directed_rip} ensures that the square
	\[
		\begin{tikzcd}
			T_k \cap \bigcup_{i=1}^{k-1} T_i \arrow{r} \arrow{d}	& T_k \arrow{d}		\\
			\bigcup_{i=1}^{k-1} T_i \arrow{r}			& \bigcup_{i=1}^k T_i
		\end{tikzcd}
	\]
	is a pushout in $\Delta$ by \Cref{face_pushout_concrete}.
\end{proof}

\begin{ex}\label{1segal_acyclic}
	Consider the spine inclusions of the edges $0 \to 1 \to \cdots \to n$ into $\Delta^n$ for $n \ge 1$. These define a directed simplicial complex with the maximal simplices given by
	\[
		T_1 = \{0,1\}, \quad \ldots, \quad T_n = \{n-1, n\}.
	\]
	Since this directed simplicial complex has no cycles or combinatorial spheres, and trivially contains the entire spine, it defines a directed acyclic configuration.
	\Cref{acyclicfiller} thus implies that in an inner span complete simplicial set, every string of $n$ edges is the spine of an $n$-simplex. This is a weak version of the $1$-Segal condition in which the fillers are not required to be unique. Unlike the ordinary 1-Segal condition, in this case these fillers alone do not imply the existence of fillers for inner spans.
\end{ex}

\begin{ex}\label{2segal_acyclic}
	Consider any triangulation of the $(n+1)$-gon for $n \ge 2$, with vertices labeled in order from $0$ to $n$ as in the two examples below for $n=3$.
$$
	\begin{tikzcd}
		1 \ar{r} \ar{dr}	& 2 \ar{d}	\\
		0 \ar{r} \ar{u}	& 3				
	\end{tikzcd}\qquad\qquad	\begin{tikzcd}
		1 \ar{r} 					& 2 \ar{d}	\\
		0 \ar{r} \ar{u} \ar{ur}	& 3				
	\end{tikzcd}
$$
	Then the edges and triangles of the triangulation define a directed acyclic configuration in the $n$-simplex. Indeed the configuration contains the spine of the $n$-simplex, as the spinal edges are among the outer edges of the $n$-gon. As a triangulation, the $1$-skeleton of this configuration is a chordal graph, and the only combinatorial spheres are the filled triangles. By \Cref{directed_acyclic}\ref{directed_fillers}, this is a directed acyclic configuration.

	\Cref{acyclicfiller} thus shows that inner span complete simplicial sets also satisfy a weak version of the Dyckerhoff-Kapranov formulation of the 2-Segal property in terms of polygon triangulations (\cite[Definition 2.3.1]{DK2segal}), and that this is implied by the weak analogue of the corresponding exactness conditions in \cite{GKT1} and \cite{Walde}. In contrast to the strong case, the converse does not since non-unique fillers against 2-dimensional triangulations do not provide a way to fill inner spans of simplices of dimension greater than 2.
\end{ex}

\begin{ex}\label{dsegal_acyclic}
	Any triangulation of a polytope on vertices $0,\ldots,n$ has fillers of combinatorial spheres, and forms a directed acyclic configuration of the $n$-simplex if its $1$-skeleton is a chordal graph containing the spinal edges. For example, the cyclic polytope on vertices $0,1,\ldots,n$ in $d$-dimensional space (with $n \ge d$) is the convex hull of any $n+1$ points on the moment curve $t \mapsto (t,t^2,\ldots,t^d)$ for $t \in \R$~\cite{GaleCyclic}. Any triangulation of such a polytope automatically contains the spinal edges, so to show these triangulations are directed acyclic it remains only to show that their 1-skeletons are chordal.

When $d > 3$, by~\cite[Theorem 1]{GaleCyclic} the $1$-skeleton of each cyclic polytope is a complete graph, hence chordal, so these triangulations form directed acyclic configurations.  When $d=3$, for example by Gale's evenness criterion~\cite[Theorem~3]{GaleCyclic}, the $1$-skeleton of the cyclic polytope on $0,\ldots,n$ consists precisely of the spinal edges along with edges from $0$ to any vertex and from any vertex to $n$. As the only edges between vertices other than $0$ and $n$ are the spinal edges from $i$ to $i+1$, any cycle must then contain $0$ or $n$, which has an edge to every vertex in the cycle, so the $1$-skeleton is chordal, and therefore any triangulation of the polytope by $3$-simplices forms a directed acyclic configuration. 

Dyckerhoff and Kapranov suggest in \cite{DK2segal} that a ``$d$-Segal condition'' could be defined for any $d \ge 1$ as a simplicial set having unique fillers against the inclusion into $\Delta^n$ of any $d$-simplex triangulation of the $d$-dimensional cyclic polytope on $0,\ldots,n$. This definition is made precise by Poguntke (\cite[Definition 2.2]{Poguntke}), who restricts to just the ``upper'' and ``lower'' triangulations of the cyclic polytopes (fillers against just those two triangulations for each cyclic polytope suffice to provide fillers for all triangulations, but only if the fillers are unique, which is easy to see when $d=2$).  

The fact that cyclic polytope triangulations form directed acyclic configurations shows, thanks to \Cref{acyclicfiller}, that inner span completeness subsumes a weak version of the triangulation-style $d$-Segal condition for all $d$. In fact, this requires only that a simplicial set $X$ is complete with respect to the pushouts of coface squares in \Cref{2segal_stiff_split_squares} excluding basic pushouts of the first and last coface maps. It is possible to formulate another even stronger notion of acyclicity corresponding to lifting properties which follow from this weak analogue of the 2-Segal condition, but that is beyond the scope of this paper.

In \cite{Walde}, Walde formulates equivalent characterizations of the $d$-Segal conditions in terms of exactness with respect to higher dimensional cube diagrams in $\Delta$, recovering the appropriate squares of \cite{GKT1} for $d=1,2$. It is possible that weak versions of these conditions are also implied by inner span completeness, but this too is beyond our current scope.
\end{ex}

\section{Examples of (inner) span completeness}
\label{sec:examples}

We now discuss several examples of span complete and inner span complete simplicial sets. We quickly show that any any quasicategory is inner span complete and note that any Kan complex is span complete, and give several examples of span complete simplicial sets which are not quasicategories.  The motivating example for the development of inner span complete simplicial sets is the bar construction of algebras for certain types of monads, which we discuss in a follow-up paper \cite{weakly_cartesian}.
Each of these examples can now benefit from \Cref{acyclic_fillers} or \Cref{acyclicfiller}, with simple (directed) acyclicity conditions describing a broad class of configurations which have $n$-simplex fillers. 

The examples we discuss can also be found in \cite{FF} as examples of \emph{compositories}:

\begin{defn}[{\cite[Definition 2.2.2]{FF}}]
A compository is a simplicial set $X$ for which inner spans of the type
	\begin{equation}
		\label{compository_filler}
		\begin{tikzcd}
			\Delta^p \; {}_{r'}\! \sqcup_{\ell'} \Delta^q \arrow{r} \arrow[hook]{d}{r}[swap]{\ell,} & X \\
			\Delta^n \arrow[dashed]{ur}
		\end{tikzcd}
	\end{equation}
	have specified fillers, and these fillers satisfy certain coherences in the form of associativity and partial naturality properties. Here, $\ell : \Delta^p \to \Delta^n$ is the left inclusion $d^n \cdots d^{p+1}$ and $r : \Delta^q \to \Delta^n$ is the right inclusion $d^0 \cdots d^0$, and similarly $\ell'$ and $r'$ are the left and right inclusions of the intersection simplex $\Delta^{p + q - n}$.
\end{defn}

Compositories describe simplicial sets in which some of the inner span inclusions we consider, namely the spans containing the first and last $(n-1)$-faces of the $n$-simplex, are assigned coherent choices of fillers. In many of the examples of compositories, however, these fillers are not unique and additional weak filler conditions are satisfied making them (inner) span complete simplicial sets. The structure of (inner) span completeness does not subsume that of compositories, but rather offers a complementary perspective on simplicial sets whose simplices can be combined to form higher dimensional simplices with varying  degrees of uniqueness.

\subsection*{Quasicategories}

Among the most basic classes of simplicial sets defined by filler conditions is the class of quasicategories. By definition, quasicategories have $n$-simplex fillers for all $n$-dimensional inner horns.

\begin{prop}
	\label{qcat_innerspans}
	Quasicategories are inner span complete.
\end{prop}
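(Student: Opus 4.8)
The plan is to reduce the statement to a lifting problem and then exhibit the relevant maps of simplicial sets as inner anodyne. By \Cref{def:inner_span_complete} and \Cref{complete_characterization}, a simplicial set $X$ is inner span complete if and only if $\Comp(X)$ contains the basic pushout squares of coface maps in \Cref{type2face}; and by the lifting reformulation of completeness (a square in $\Delta$ is sent to a weak pullback by $X$ exactly when $X$ has the right lifting property against the induced map of pushouts into the relevant representable), this holds if and only if $X$ has the right lifting property against the inclusions
\[
	\iota_{n,i,j} \: : \: \Delta^{n-1} \, {}_{d^i}\!\sqcup_{d^{j-1}} \Delta^{n-1} \; \longrightarrow \; \Delta^n, \qquad 0 \le i < j - 1 \le n - 1,
\]
where the two copies of $\Delta^{n-1}$ are glued along $\Delta^{n-2}$ via $d^i$ and $d^{j-1}$ and map into $\Delta^n$ via $d^j$ and $d^i$ respectively. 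Since quasicategories have the right lifting property against inner horn inclusions \cite{joyal}, and the class of maps with the right lifting property against a fixed set of maps is weakly saturated (closed under pushout, transfinite composition, and retract), it suffices to show that each $\iota_{n,i,j}$ is inner anodyne.

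The domain of $\iota_{n,i,j}$ is the subcomplex $A \subseteq \Delta^n$ consisting of the faces opposite the vertices $i$ and $j$; concretely, a face $F \subseteq \{0,\ldots,n\}$ lies in $A$ if and only if $\{i,j\} \not\subseteq F$. For $n = 2$ the only admissible pair is $(i,j) = (0,2)$, and $\iota_{2,0,2}$ is precisely the inner horn inclusion $\Lambda^2_1 \hookrightarrow \Delta^2$. For general $n$, I would build $\Delta^n$ from $A$ by adjoining the \emph{missing faces} --- those $F$ with $\{i,j\} \subseteq F$ --- in a carefully chosen order. The key device is an \emph{inner pivot}: a vertex $v$ with $i < v < j$, which exists precisely because the innerness hypothesis $i < j-1$ makes $\{i+1,\ldots,j-1\}$ nonempty. (For a non-inner basic span no such $v$ exists; this is the combinatorial reason that quasicategories need not be span complete, in contrast with \Cref{kan_spans}.) Adjoin the simplices $\Delta^{F}$ for $F$ a missing face containing $v$, in order of increasing dimension, the order among faces of a fixed dimension being immaterial; note that every missing face not containing $v$ appears automatically as the face $(F) \setminus \{v\}$ of such an $F$, so $\Delta^n$ is indeed covered. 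One checks that at the moment $\Delta^{F}$ is attached, the part of its boundary already present is exactly the horn missing the single face $F \setminus \{v\}$: the faces obtained by deleting $i$ or $j$ lie in $A$; those obtained by deleting some other vertex $\ne v$ are lower-dimensional missing faces still containing $v$, hence attached at an earlier stage; and $F \setminus \{v\}$ itself enters only now. Moreover $F$ contains $i$, $j$, and $v$ with $i < v < j$, so $v$ is strictly interior to $\Delta^{F}$ and this horn is an \emph{inner} horn. Thus $\iota_{n,i,j}$ factors as a finite composite of pushouts of inner horn inclusions and is inner anodyne, completing the argument.

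I expect the main obstacle to be the bookkeeping in the second paragraph: organizing the filtration by the pivot $v$ and verifying, in each step, that exactly one proper face of $\Delta^{F}$ --- namely $F \setminus \{v\}$ --- is absent when $\Delta^F$ is attached, together with the easy but fiddly point that this absent face is opposite an interior vertex precisely when the basic span is inner. The remaining ingredients --- the reduction through \Cref{complete_characterization} and the lifting reformulation, and the passage from ``inner anodyne'' to ``quasicategories lift'' --- are purely formal.
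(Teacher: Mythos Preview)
Your proof is correct and follows the same strategy as the paper: reduce via \Cref{complete_characterization} to the basic pushout squares \Cref{type2face}, then show that each inclusion $\iota_{n,i,j}$ is inner anodyne. The paper dispatches this last step in one line by invoking \cite[Lemma~4.4.5.5]{lurie} with $J=\{i,j\}$, whereas you supply the filtration by hand (choosing the pivot $i<v<j$ and attaching the missing faces containing $v$ by dimension); your combinatorial verification is essentially the proof of that special case of Lurie's lemma, so the two arguments coincide up to black-boxing. One small wording slip: it is the class of maps having the \emph{left} lifting property against a fixed class that is weakly saturated, not the right---what you use is the dual fact that objects with the RLP against inner horn inclusions automatically have the RLP against their saturation.
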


\begin{proof}
	Consider a basic inner span omitting respectively the $i$th and $j$th vertices of the $n$-simplex where $j - i > 1$.
	Choose $k$ to lie between $i$ and $j$.  Observe that this span contains precisely the faces of $\Delta^n$ not containing the edge from $i$ to $j$.  By \cite[Lemma 4.4.5.5]{lurie} applied with $J = \{ i, j \}$, the inclusion of the basic inner span into the $n$-simplex is inner anodyne, and therefore has fillers in any quasicategory.
\end{proof}

Moreover, it is straightforward to see that every Kan complex is span complete.


\subsection*{Finite metrics}

\cite[Section 3.3]{FF} describes the simplicial set of finite metric spaces, which we now show to be span complete. Recall that a \emph{pseudometric} on a set $S$ is a function $d : S \times S \to \mathbb{R}_{\ge 0}$ such that
\begin{itemize}
	\item (Reflexivity) $d(x,x) = 0$ for all in $x \in S$,
	\item (Symmetry) $d(x,y) = d(y,x)$ for all $x,y \in S$,
	\item (Triangle inequality) $d(x,z) \le d(x,y) + d(y,z)$ for all $x,y,z \in S$.
\end{itemize}
Pseudometrics differ from metrics in that they do not require the non-degeneracy condition that $d(x,y) = 0$ only when $x=y$ in $S$. This weakening of the definition is necessary in order to define the degeneracy maps in the simplicial set constructed below. This construction would work just as well without imposing the symmetry condition on pseudometrics, so the thus inclined reader may as well drop the symmetry property. In the following, we will simply say \emph{metric} to refer to either type of pseudometric.

\begin{ex}
Let $\mathcal{M}_1(n)$ be the set of metrics on the set $[n]$ of $n+1$ points. For each map $f : [m] \to [n]$ in $\Delta$ and metric $d$ on $[n]$, define the metric $f^\ast d$ by $(f^\ast d)(x,y) = d(f(x),f(y))$ for $x,y \in [m]$.  When $f$ is a coface map, then $f^\ast$ takes a metric on $[n]$ and restricts it to the image of $[m]$ under $f$, and when $f$ is a codegeneracy map, it replaces each point $i$ in $[n]$ with the set $f^{-1}(i)$, all of which have zero distance from each other and such that they have the same distances to any other point in $[m]$.

A span in $\mathcal{M}_1$ amounts to a choice of metrics $d_p,d_q$ on $[p],[q]$ which agree upon restriction along the span's coface maps $[m] \to [p]$ and $[m] \to [q]$. A filler of this span is a metric $d_n$ on $[n] = [p+q-m] \cong [p] \cup_{[m]} [q]$ which restricts to $d_p,d_q$ on $[p],[q]$, respectively. A canonical (but not generally unique) choice of $d_n$ is given by
\[
	d_n(i,j) = \begin{cases}
		d_p(i,j) & \textrm{if } i,j \in [p], \\[3pt]
		d_q(i,j) & \textrm{if } i,j \in [q], \\[3pt]
		\minl\limits_{k \in [m]} \blp d_p(i,k) + d_q(k,j) \brp & \textrm{if } i \in [p]\backslash[m], \; j \in [q]\backslash[m],
	\end{cases}
\]
generalizing \cite[Definition 3.3.2]{FF} to any inclusions of $[m]$ into $[p]$ and $[q]$. Note that the first two cases are not disjoint, but result in the same value for $i,j \in [m]$ due to the assumption of equal restriction.

Unlike the compository structure on $\mathcal{M}_1$, the property of span completeness does not single out this particular filler over other possible ones, but it does capture fillers of more spans (namely for any span of cofaces $[p] \leftarrow [m] \to [q]$ rather than just the one where $[m]$ is embedded as the initial or final face, respectively).

By \cite[Figure 5]{FF}, $\mathcal{M}_1$ is not a Kan complex. It follows that Kan complexes are a strict subclass of span complete simplicial sets. 
\end{ex}

\subsection*{Higher spans}

Following \cite[Section 3.2]{FF}, \emph{higher spans} describe sequences of adjacent spans in a category equipped with choices of the data necessary to ``compose'' them, in the following sense.

\begin{defn}
For $n \in \mathbb{N}$, we let $\Sp_n$ be the poset category pictured below.
\[\begin{tikzcd}[column sep={4.0em,between origins}, row sep={3.5em,between origins}]
	& & & & (0,n) \ar{dl} \ar{dr} & & & & \\
	& & & (0,n-1) \ar{dl} \ar{dr} & & (1,n) \ar{dl} \ar{dr} & & & \\
	& & \iddots \ar{dl} & & (1,n-1) \ar{dl} \ar{dr} & & \ddots \ar{dr} & & \\
	& (0,1) \ar{dl} \ar{dr} & & \iddots \ar{dl} & & \ddots \ar{dr} & & (n-1,n) \ar{dl} \ar{dr} & \\
	(0,0) & & (1,1) & & \cdots & & (n-1,n-1) & & (n,n)
\end{tikzcd}\]
\end{defn}

$\Sp$ forms a cosimplicial object in $\Cat$ where the coface map $d^i : \Sp_{n-1} \to \Sp_n$ acts componentwise via the usual
\[
	j \longmapsto
	\begin{cases}
		j+1 & \textrm{if } j \ge i, \\
		j   & \textrm{if } j < i,
	\end{cases}
\]
and similarly the codegeneracy $s^i : \Sp_{n+1} \to \Sp_n$ acts componentwise via the usual
\[
	j \longmapsto
	\begin{cases}
		j-1 & \textrm{if } j > i, \\
		j   & \textrm{if } j \le i.
	\end{cases}
\]
Equivalently, $\Sp : \Delta \to \Cat$ is the composite of the inclusion functor $\Delta \to \Cat$ with the twisted arrow category functor $\Cat \to \Cat$.

\begin{defn}
An $n$-span in a category $\C$ is a functor $\Sp_n \to \C$.
\end{defn}

Conceptually, an $n$-span describes a sequence of $n$ adjacent spans and coherent choices of ``composite'' spans for each connected subsequence of spans.

\begin{ex}
	For a category $\C$, let $\mathcal{S}_\C$ be the simplicial set with $n$-simplices the set of $n$-spans $\Sp \to \C$, with simplicial structure maps induced by the cosimplicial structure of $\Sp$.

	This means that the $i$th face map $d_i$ forgets all objects of the span with an $i$ in either component, and then composes the remaining maps as needed, while the $i$th degeneracy map $s_i$ repeats the $i$th row and column with identities between them.

	An inner span (in our usual sense) in the simplicial set $\mathcal{S}_\C$ consists of a $p$-span and a $q$-span sharing an $m$-span as a face, and together containing a sequence of $n$ adjacent $1$-spans between their bottom objects. Below are two schematic examples of basic inner spans in $\mathcal{S}_\C$. 
\[\hspace{-0cm}\begin{tikzcd}[column sep=0, row sep=small]
	& & & \textcolor{white}{x_{0,3}} \\
	& & x_{0,2} \ar{dl} \ar{dr} & & x_{1,3} \ar{dl} \ar{dr} \\
	& x_{0,1} \ar{dl} \ar{dr} & & x_{1,2} \ar{dl} \ar{dr} & & x_{2,3} \ar{dl} \ar{dr} \\
	x_{0,0} & & x_{1,1} & & x_{2,2} & & x_{3,3}
\end{tikzcd}\qquad \begin{tikzcd}[column sep=0, row sep=small]
	& & & x_{0,3} \ar{ddll} \ar{dr} \\
	& &  & & x_{1,3} \ar{dl} \ar{dr} \ar[bend right=25]{ddll} \ar[bend left=25]{ddrr} \\
	& x_{0,1} \ar{dl} \ar{dr} & & x_{1,2} \ar{dl} \ar{dr} & & x_{2,3} \ar{dl} \ar{dr} \\
	x_{0,0} & & x_{1,1} & & x_{2,2} & & x_{3,3}
\end{tikzcd}\]
In the basic inner span above left, consisting of the first and last faces of a $3$-span, a $3$-span filler requires the additional data of the top span making the resulting square commute. This will always exist (albeit non-uniquely) if $\C$ has pullbacks (or more generally if $\C$ is cofiltered, but this will not be enough to fill the other inner spans).

The example above right shows an inner span consisting of the faces $\{0,1,3\}$ and $\{1,2,3\}$ in the $3$-simplex. These two together contain all of the data of a $3$-span except for the object $x_{0,2}$ and its maps
\[
	x_{0,3} \to x_{0,2}, \qquad x_{0,2} \to x_{0,1}, \qquad x_{0,2} \to x_{1,2}.
\]
These can be filled by taking $x_{0,2}$ to be the pullback of $x_{0,1}$ and $x_{1,2}$, with the map $x_{0,3} \to x_{0,2}$ induced by the universal property of the pullback, which also shows that the resulting upper square commutes. These fillers cannot be expected to be unique, however, as for instance this inner span has a different filler given by setting $x_{0,2} := x_{0,3}$.

More generally, a basic inner span consisting of the $i$th and $j$th faces of a potential $n$-span, with $j - i > 1$, contains all the data of an $n$-span except for the object $x_{i,j}$ and the four maps into and out of it, as below. 
\[\begin{tikzcd}[column sep=0, row sep=small]
	& & & x_{i-1,j+1} \ar{dl} \ar{dr} \\
	& & x_{i-1,j} \ar{dl} \ar{ddrr} & & x_{i,j+1} \ar{ddll} \ar{dr} \\
	& x_{i-1,j-1} \ar{dr} & &  & & x_{i+1,j+1} \ar{dl} \\
	& & x_{i,j-1} \ar{dr} & & x_{i+1,j} \ar{dl} \\
	& & & x_{i+1,j-1}
\end{tikzcd}\]
To fill this data to an entire $n$-span, we must specify an object $x_{i,j}$ and four maps filling the diagram above into a 2 by 2 commuting grid. Generalizing both of the previous examples, we can take $x_{i,j}$ to be a pullback $x_{i,j-1} \times_{x_{i+1,j-1}} x_{i+1,j}$, the two maps out of $x_{i,j}$ the canonical projections to $x_{i,j-1}$ and $x_{i+1,j}$, and the two maps into $x_{i,j}$ those induced by 
\[
	x_{i,j-1} \leftarrow x_{i-1,j-1} \leftarrow x_{i-1,j} \to x_{i+1,j} \quad\: \textrm{and} \quad\: x_{i,j-1} \leftarrow x_{i,j-1} \to x_{i+1,j+1} \to x_{i+1,j}
\]
respectively using the universal property of the pullback, which also ensures that the left and right squares of the grid commute. The universal property also guarantees that the top square commutes, as either side satisfies the defining property of the unique map $x_{i-1,j+1} \to x_{i,j}$ induced by $x_{i,j-1} \leftarrow x_{i-1,j+1} \to x_{i+1,j}$. 

This shows every basic inner span has a filler to an $n$-simplex, so if $\C$ has pullbacks $\mathcal{S}_\C$ is inner span complete by \Cref{complete_characterization}. Note that $\mathcal{S}_\C$ is not generally span complete even when $\C$ has pullbacks, as in the above if $j = i+1$ the object $x_{i,i+1}$ cannot be recovered by a pullback, though it can be recovered in a similar fashion as $x_{i,i} \times x_{i+1,i+1}$ if $\C$ has products, in which case $\mathcal{S}_\C$ is span complete again by \Cref{complete_characterization}.

In \cite[Section 3.2]{FF}, $\C$ is further assumed to be gaunt\footnote{A category is gaunt if the only isomorphisms are the identities.}, so that pullbacks are strictly unique. This implies the relevant coherences for the resulting fillers, and the difficulties of making $\mathcal{S}_\C$ into a compository when pullbacks are not strictly unique are also sketched. Inner span completeness provides a different description of the structure of $\mathcal{S}_\C$ which allows for such non-uniqueness of inner span fillers, but in return does not describe the up-to-isomorphism coherence properties of pullbacks that the compository structure captures when the pullbacks are unique. A more complete description of this particular structure on $\mathcal{S}_\C$ when $\C$ has pullbacks remains open.

Moreover,~\cite[Example 3.2.5]{FF} shows that when $\C$ is the category with a single commuting square of morphisms, $\mathcal{S}_\C$ is not a quasicategory, providing another example of how span filling properties are strictly weaker than horn filling.
\end{ex}

\subsection*{Gleaves on $\cat{FinSet}$}

Many of the known examples of compositories such as the simplicial set of finite metric spaces above
---with the higher span example as a notable exception---are in fact (augmented) \emph{symmetric} simplicial sets in a natural way, or equivalently \emph{gleaves} on $\cat{FinSet}$~\cite[Section~5]{FF}. While we will not need the precise definition here, we only note that every gleaf on $\cat{FinSet}$ is in particular a span complete simplicial set: the filler condition for basic coface squares~\eqref{basic_coface_squares} holds for $i = 0$ and $j = n$ as part of the algebraic structure carried by a gleaf (as it already does for compositories), and this is enough to prove the filler condition in general by symmetry.

Our final two examples are particular instances of this general construction of span complete simplicial sets from gleaves.

\begin{ex}[Joint probability distributions]
	\label{joint_distributions}
	We now sketch the main example of~\cite{FF} coming from probability theory. Fix a finite set $S$, representing the set of possible values of some random variables; the finiteness assumption on $S$ is merely for technical simplicity. Then define an $n$-simplex to be a joint distribution of $(n+1)$ many $S$-valued random variables, i.e.~a probability measure on the cartesian product $S^{\times (n+1)}$. Taking the pushforward of this measure along a projection to a subproduct is then what defines the faces of such a simplex, and similarly taking the pushforward along the diagonal $S \to S \times S$ is involved in the definition of the degeneracy maps, which intuitively amounts to duplicating the value of a variable. We thus obtain the desired simplicial set in which the $n$-simplices are the joint distributions of $n+1$ random variables. The fillers of~\Cref{compository_filler} then acquire the following significance. Suppose that we are given a set of $n+1$ random variables and write it as the union of subsets containing $p+1$ and $q+1$ random variables, respectively. Then if we are given a joint distribution of the $p+1$ variables and a joint distribution of the $q+1$ variables, and if these two agree when marginalized to the subsubset of variables contained in both subsets, then there is joint distribution for all $n+1$ variables which marginalizes to the given ones. And indeed there is a distinguished choice for this overall joint distribution: we can make all variables contained in the first subset but not the second to be conditionally independent of those in the second set but not the first, conditionally with respect to the variables contained in both, which is exactly the conditional product that we also used in \cite[Example 2.8]{weakly_cartesian}. 
	In particular, this simplicial set is span complete.

	However, fillers for (inner or outer) $3$-horns do not generally exist as soon as $|S| \ge 2$. For then, we can consider without loss of generality $S = \{0,1\}$, and four random variables $A,B,C,D$ where $D = 0$ with probability $1$, and the other three such that they take either value with probability $1/2$, but are correlated such that they take \emph{opposite} values with probability $1$. This defines joint distributions of $ABD$, $ACD$ and $BCD$, thereby forming a $3$-horn in the corresponding symmetric simplicial set. Similar to the examples in the proof of \cite[Theorem 4.7]{weakly_cartesian}, 
	this $3$-horn is such that already its missing $2$-face cannot be filled: there is no joint distribution of $ABC$ which would make all three variables take opposite values with probability $1$, since there is not even a single assignment of values in $S$ which would assign opposite values to every pair.

	We also refer to~\cite[Section~12]{markov_cats}, where a more general construction of gleaves of this type has been proposed. This in particular applies in the context of infinite $S$ and measure-theoretic probability.
\end{ex}

\begin{ex}[Relational databases]
	\label{databases}
	A closely related example comes from the theory of relational databases~\cite{maier}. Again fixing a set $S$ for possible values, which now plays the role of the set of possible values for the entries in a table of a database, in this case an $n$-simplex is defined to be a \emph{subset} of $S^{\times (n+1)}$ (rather than a probability measure as in the previous table). The idea is to interpret the individual factors of $S^{\times (n+1)}$ as the columns in a table of a database, so that the subset is the set of rows that appear in the table.

	These simplices assemble into a simplicial set, where the face maps are given by projecting a subset to a subproduct (deleting one or more columns in a table and eliminating duplicates), and the generating degeneracy maps are defined again in terms of pushforward along the diagonal $S \to S \times S$ (duplicating a column). More abstractly, this simplicial set can be described as the composite functor
\[
	\begin{tikzcd}
		\Delta^\op \ar{r} & \cat{FinSet}^\op \ar{r}{S^{(-)}} & \Set \ar{r}{2^{(-)}} & \Set
	\end{tikzcd}
\]
Moreover, it is in a canonical way a symmetric simplicial set with respect to permutation of the factors.

	This simplicial set has properties closely analogous to the previous example. In particular it has fillers for all inner spans, which we now describe with the database terminology. We thus assume that $A \subseteq S^{p+1}$ and $B \subseteq S^{q+1}$ are tables in a database with $p+1$ and $q+1$ columns respectively, and that they have $m+1$ columns in common, so that dropping the other columns from either table results in the same $(m+1)$-column table (after removing duplicates). Then there is a \emph{maximal} way to create a table with $n + 1$ columns, where $n = p + q - m$, given by using all conceivable rows whose restriction to the $p+1$ attributes of $A$ occurs in $A$ and whose restriction to the $q+1$ attributes of $B$ occurs in $B$. This is known as the \emph{join} of $A$ and $B$~\cite[Section~2.4]{maier}.

	Therefore the simplicial set is indeed span complete. In particular, we also obtain fillers for all acyclic configurations by~\Cref{acyclic_fillers}. This is a classic result of relational database theory~\cite{maier}\footnote{See property 5 of Theorem 13.2 in there.}.
\end{ex}

\bibliographystyle{plain} 

\end{document}